\providecommand{\tabularnewline}{\\}
\theoremstyle{plain}
\newtheorem{thm}{\protect\theoremname}
\theoremstyle{remark}
\newtheorem{rem}[thm]{\protect\remarkname}
\theoremstyle{plain}
\newtheorem{prop}[thm]{\protect\propositionname}
\providecommand{\tabularnewline}{\\}
\numberwithin{figure}{section}
\theoremstyle{plain}
\newcommand{\pp}{\rho}
\pgfplotsset{compat=newest}
\DeclareMathOperator{\sinc}{sinc}
\DeclareMathOperator{\ii}{Im}
\newcounter{example}
\newenvironment{example}[1][]{\refstepcounter{example}\par\medskip
   \noindent \textbf{Example~\theexample. #1} \rmfamily}{\medskip}
\providecommand{\propositionname}{Proposition}
\providecommand{\remarkname}{Remark}
\providecommand{\theoremname}{Theorem}
\begin{document}
\title{\textbf{Sinc method in spectrum completion and inverse Sturm-Liouville
problems }}
\maketitle
\begin{center}
Vladislav V. Kravchenko\textsuperscript{1}, L. Estefania Murcia-Lozano\textsuperscript{1} 
\par\end{center}

\begin{center}
\textsuperscript{1}Departamento de Matemáticas, Cinvestav, Unidad
Querétaro 
\par\end{center}

\begin{center}
Libramiento Norponiente \#2000, Fracc. Real de Juriquilla, Querétaro,
Qro. C.P. 76230 México 
\par\end{center}

\begin{center}
vkravchenko@math.cinvestav.edu.mx, emurcia@math.cinvestav.mx 
\par\end{center}
\begin{abstract}
Cardinal series representations for solutions of the Sturm-Liouville
equation $-y''+q(x)y=\rho^{2}y$, $x\in(0,L)$ with a complex valued
potential $q(x)$ are obtained, by using the corresponding transmutation
operator. Consequently, partial sums of the series approximate the
solutions uniformly with respect to $\rho$ in any strip $\left|\text{Im}\rho\right|<C$
of the complex plane. This property of the obtained series representations
leads to their applications in a variety of spectral problems. In
particular, we show their applicability to the spectrum completion
problem, consisting in computing large sets of the eigenvalues from
a reduced finite set of known eigenvalues, without any information
on the potential $q(x)$ as well as on the constants from boundary
conditions. Among other applications this leads to an efficient numerical
method for computing a Weyl function from two finite sets of the eigenvalues.
This possibility is explored in the present work and illustrated by
numerical tests. 

Finally, based on the cardinal series representations obtained, we
develop a method for the numerical solution of the inverse two-spectra
Sturm-Liouville problem and show its numerical efficiency.
\end{abstract}
\textbf{Keywords:} Non-selfadjoint Schrödinger operator; cardinal
series representations; spectrum completion; two-spectra inverse Sturm-Liouville
problems; Weyl function

\section{Introduction}

Let $q\in\mathcal{L}_{2}\left[0,L\right]$ be complex valued, $L>0$.
Consider the Sturm-Liouville equation
\begin{equation}
-y''+q(x)y=\rho^{2}y,\,\,0<x<L,\label{eq:PrincipalEq}
\end{equation}
where $\rho\in\mathbb{C}$ is a spectral parameter. 

Let $\phi(\rho,x)$ and $S(\rho,x)$ be the solutions of (1) satisfying
the initial conditions 
\[
\phi(\rho,0)=1,\quad\phi^{\prime}(\rho,0)=h,\quad h\in\mathbb{C},
\]
\[
S(\rho,0)=0,\quad S^{\prime}(\rho,0)=1
\]
for all $\rho\in\mathbb{C}$. For any $x>0$ fixed, and as functions
of $\rho$, the functions $\phi(\rho,x)-\cos(\rho x)$ and $S(\rho,x)-\frac{\sin(\rho x)}{\rho}$
are entire and, moreover, belong to the Paley-Wiener class $PW_{x}^{2}=\{f\in\mathcal{L}_{2}(\mathbb{R}):\exists g\in\mathcal{L}_{2}(-x,x),f(z)=\int_{-x}^{x}g(u)e^{iuz}du\}$.
This is because of the existence of the $\mathcal{L}_{2}$-functions
$\mathbf{G}(x,\cdot)$ and $\mathbf{S}(x,\cdot)$, such that 
\[
\phi(\rho,x)=\cos\left(\rho x\right)+\int_{0}^{x}\mathbf{G}(x,t)\cos\left(\rho t\right)\,dt
\]
and 
\[
S(\rho,x)=\frac{\sin\left(\rho x\right)}{\rho}+\int_{0}^{x}\mathbf{S}(x,t)\frac{\sin\left(\rho t\right)}{\rho}dt
\]
for all $\rho\in\mathbb{C}$. These Volterra integral operators of
the second kind are known as transmutation or transformation operators
and play a key role in spectral theory and theory of inverse problems.
For their theory we refer to \cite{Levitan}, \cite{MArchenko1},
\cite{Marchenko} and \cite{yurko}.

Several different series representations for the solutions $\phi(\rho,x)$
and $S(\rho,x)$ are known. The spectral parameter power series (SPPS)
representations (see \cite{Karapetyants}, \cite{KrCV08}, \cite{APFT},
\cite{PorterKrav}, \cite{KKRosu}, \cite{VK2020} and references
therein) give the solutions $\phi(\rho,x)$ and $S(\rho,x)$ in the
form of power series in terms of $\rho$. The coefficients of the
series are calculated following a simple recurrent integration procedure
and in fact, they are nothing but the images of the Taylor series
coefficients of the functions $\cos\left(\rho x\right)$ and $\sin\left(\rho x\right)/\rho$
under the action of the respective transmutation operators. The SPPS
converge uniformly in any compact set of the complex $\rho$-plane
and find numerous applications when solving direct spectral problems
(we refer to the review \cite{KKRosu} and references in \cite[p. 11]{VK2020}).

In \cite{Vk2017NSBF}, by expanding the kernels $\mathbf{G}(x,t)$
and $\mathbf{S}(x,t)$ into series of Legendre polynomials, the Neumann
series of Bessel functions (NSBF) representations for $\phi(\rho,x)$
and $S(\rho,x)$ were obtained (equalities (\ref{eq:NsbfS}) and (\ref{eq:NsbfPhi})
below). They proved to be extremely useful for solving both the direct
and the inverse spectral problems for (\ref{eq:PrincipalEq}) (see
\cite{Vk2017NSBF}, \cite{VK2020}, \cite{VkSpectralCompl2023}, \cite{VKComplex})
due to a couple of very convenient properties: they converge uniformly
with respect to $\rho\in\mathbb{R}$ (in fact, with respect to $\rho$
belonging to a strip $\left\vert \text{Im}\left(\rho\right)\right\vert \leq C$),
and the first coefficient of the series is sufficient to reconstruct
the potential $q(x)$. In \cite{VKComplex}, for continuously differentiable
potentials other NSBF representations for the solutions $\phi(\rho,x)$
and $S(\rho,x)$ were derived, which not only converge uniformly with
respect to $\rho\in\mathbb{R}$ but also admit estimates for the remainders
of the series which guarantee even the improvement of the convergence
for large values of $\left\vert \rho\right\vert $ ($\left\vert \text{Im}\left(\rho\right)\right\vert \leq C$).
They were used for solving a variety of inverse coefficient problems
for (\ref{eq:PrincipalEq}).

The belonging of the functions $\phi(\rho,x)-\cos(\rho x)$ and $S(\rho,x)-\frac{\sin(\rho x)}{\rho}$
to the Paley-Wiener class $PW_{x}^{2}$, due to the Whittaker-Shannon-Kotelnikov
sampling theorem, naturally leads to their representations in the
form of cardinal series (involving the $\sinc$ function, see e.g.,
\cite{Higgings}, \cite{Marks}, \cite{Whittaker} and \cite{Zayed}).
They were used for solving direct Sturm-Liouville problems in a number
of papers, e.g., \cite[Chapter 10]{Baumann}, \cite{BoumenirChananeEigenvalues1996},
\cite{Boumenir periodic}, \cite{Boumenir}, \cite{Chanane}, \cite{Eggert},
\cite{Gau}, \cite{Tharwat}, and associated with a general Sinc method
for ordinary and partial differential equations, e.g., \cite{Alkaheld},
\cite{Annaby2007}, \cite{Marks}, \cite{Parand}, \cite{Stenger},
\cite{Stenger 200} and references therein, \cite{Tharwat2}.

In the present paper we develop the Sinc method by obtaining new cardinal
series for the solutions $\phi(\rho,x)$ and $S(\rho,x)$ and applying
them to the spectrum completion, to the computation of Weyl's function
from two spectra as well as to the solution of the two-spectra inverse
Sturm-Liouville problem.

The spectrum completion refers to a technique which allows one to
compute a large set of the eigenvalues of a Sturm-Liouville problem
from several given eigenvalues, without knowing the potential $q(x)$.
This possibility was explored in \cite{VkSpectralCompl2023} for real-valued
potentials, and with the aid of the NSBF representations. It was shown
that this technique gives more accurate results than, e.g., implementing
well known asymptotic formulas for the eigenvalues. In the present
work we show that the cardinal series representations for the solutions
are also applicable to this problem, including the spectrum completion
for complex valued potentials. Comparison with the NSBF series shows
that the cardinal series are even preferable for dealing with noisy
data. The approach is based on converting the knowledge of several
eigenvalues into the series representation coefficients of the characteristic
function. Due to the uniform convergence of the series in strips ($\left\vert \text{Im}\left(\rho\right)\right\vert \leq C$),
this gives us the possibility to compute large sets of zeros of the
characteristic function, thus completing the spectrum. The same idea
is used in the present work for computing the Weyl function from the
corresponding two spectra. The Weyl function can be regarded as a
quotient of two characteristic functions of Sturm-Liouville problems,
and for its calculation there are known analytical formulas (see,
e.g., \cite{FreYurko}, \cite[Lemma 4.1]{Hati}) which unfortunately
are not of a practical applicability. In the present work we use the
known eigenvalues from two spectra for computing the coefficients
of the characteristic functions, that leads to a simple and efficient
method for computing the corresponding Weyl function.

Finally, we develop a method for solving the classical two-spectra
inverse Sturm-Liouville problem, which consists in the recovery of
a potential $q(x)$ and boundary conditions from the spectra of two
different Sturm-Liouville problems for (\ref{eq:PrincipalEq}).

This problem has been extensively studied (see, e.g., \cite{Borg},
\cite{Chadan}, \cite{VK2020}, \cite{Rundell}), even in the case
of a complex valued potential $q(x)$, (see, \cite{Bondarenko}, \cite{Buterin},
\cite{FreYurko}, \cite{Hor2}, \cite{VKComplex}, \cite{Marletta}).
It encounters practical applications across various fields (see, e.g.,
\cite{Barcilon}, \cite{Gladewell}). 

In a number of publications numerical methods for the two-spectra
inverse problems have been developed, see, e.g., \cite{Chadan}, \cite{Gao},
\cite{Kammanee}, \cite{VK2021}, \cite{KKCMAthematics}, \cite{VkSpectralCompl2023},
\cite{Neamaty}, \cite{R=0000F6hrl} and \cite{Rundell}, however,
for real-valued potentials. It is worth mentioning that \cite{VK2021}
was the first work where a numerical method developed allowed recovering
the boundary conditions. Here we present a method for the numerical
solution of the two-spectra inverse problems for complex potentials,
developing a recent result from \cite{VKComplex}, where an approach
to solving inverse coefficient problems for complex potentials was
proposed. Moreover, to our best knowledge the present work is the
first one, in which the application of the Sinc method to the inverse
spectral problems is developed.

The paper is structured as follows. In Section 2 we introduce necessary
notations, definitions and properties concerning the transmutation
operators. In Section 3 we introduce the NSBF representations together
with their basic properties. In Section 4, Fourier expansions of the
transmutation kernels and the Whittaker--Shannon--Kotelnikov sampling
theorem are used to obtain one of the main results of this work, the
cardinal series representations, Theorem \ref{thm:Let-.-The OldSIn}
and Theorem \ref{thm:Let-.-TheNewSinc}. Additionally, analogous results
are obtained for the derivatives of solutions of (\ref{eq:PrincipalEq}).
Section 5 presents an approach for solving the spectrum completion
problem together with numerical results and discussion. In Section
6 an approximation of the Weyl function from two spectra of (\ref{eq:PrincipalEq})
using Sinc methods is discussed. Section 7 presents an approach for
solving the two-spectra inverse Sturm-Liouville problem together with
numerical illustrations. Finally, Section 8 presents some concluding
remarks.

\section{Preliminaries }

By $S\left(\rho,x\right)$, $T\left(\rho,x\right)$, $\phi(\rho,x)$
and $\psi\left(\rho,x\right)$ we denote the solutions of (\ref{eq:PrincipalEq})
satisfying the initial conditions 
\begin{align*}
S(\rho,0) & =0,S'(\rho,0)=1,\,T(\rho,L)=0,T'(\rho,L)=1,\\
\phi(\rho,0) & =1,\phi'(\rho,0)=h,\,\psi(\rho,L)=1,\psi'(\rho,L)=-H,
\end{align*}
respectively, where $h$ and $H$ are some complex constants. 

Denote
\[
\omega(x)=\frac{1}{2}\int_{0}^{x}q(s)ds.
\]

We will use the notation $f_{1}(x,t)$ and $f_{2}(x,t)$ for the partial
derivatives with respect to the first or second argument, respectively
($f_{11}(x,t)$ and $f_{22}(x,t)$ for the second derivatives). 

We recall the integral representations for $S\left(\rho,x\right)$
and $\phi(\rho,x)$ arising in the theory of transmutation operators
(see, e.g., \cite{VK2020}, \cite{Marchenko}, \cite{Sitnik}, \cite{yurko}). 
\begin{thm}
\label{thm: FourierRepTransKernel}Let $q\in\mathcal{L}_{2}\left[0,L\right]$.
There exist functions $\boldsymbol{\mathcal{S}}(x,t)$ and $\boldsymbol{\mathcal{G}}(x,t)$
defined in the domain $0\leq t\leq x\leq L,$ such that
\begin{align}
S(\rho,x) & =\frac{\sin(\rho x)}{\rho}+\frac{1}{\rho}\int_{0}^{x}\boldsymbol{\mathcal{S}}(x,t)\sin(\rho t)dt,\label{eq:TransOpS}\\
\phi(\rho,x) & =\cos(\rho x)+\int_{0}^{x}\boldsymbol{\mathcal{G}}(x,t)\cos(\rho t)dt,\label{eq: TransOpPhi}
\end{align}
for all $\rho\in\mathbb{C}$. The functions $\boldsymbol{\mathcal{S}}(x,\cdot)$
and $\boldsymbol{\mathcal{G}}(x,\cdot)$ possess the same regularity
as $\int_{0}^{x}q(t)dt$, see \cite[p. 22]{FreYurko}. In particular,
$\boldsymbol{\mathcal{S}}(x,\cdot),\boldsymbol{\mathcal{G}}(x,\cdot)\in W_{2}^{1}[0,x]$
($W_{2}^{1}\left[0,L\right]=\left\{ f\in\mathcal{L}_{2}\left[0,L\right]\,|\,f'(x)\in\mathcal{L}_{2}\left[0,L\right]\right\} $),
see the proof of Proposition 19 in \cite{Campos}. 

The derivatives of $S\left(\rho,x\right)$ and $\phi(\rho,x)$ with
respect to $x$ admit the integral representations
\begin{align}
S'(\rho,x) & =\cos(\rho x)+\frac{\omega(x)}{\rho}\sin(\rho x)+\frac{1}{\rho}\int_{0}^{x}\boldsymbol{\mathcal{S}}_{1}(x,t)\sin(\rho t)dt,\label{eq:TransOpS-1}\\
\phi'(\rho,x) & =-\rho\sin(\rho x)+\left(h+\omega(x)\right)\cos(\rho x)+\int_{0}^{x}\boldsymbol{\mathcal{G}}_{1}(x,t)\cos(\rho t)dt,\label{eq: TransOpPhi-1}
\end{align}
for $0\leq t\leq x\leq L$, where $\boldsymbol{\mathcal{S}}_{1}(x,\cdot)$,
$\boldsymbol{\boldsymbol{\mathcal{G}}}_{1}(x,\cdot)\in\mathcal{L}_{2}\left[0,x\right]$. 
\end{thm}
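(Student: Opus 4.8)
The representations (\ref{eq:TransOpS}) and (\ref{eq: TransOpPhi}), together with the asserted regularity of the kernels, are classical, so I would simply invoke the cited sources (\cite{Marchenko}, \cite{yurko}, and \cite[p.\ 22]{FreYurko} together with Proposition~19 of \cite{Campos} for the $W_{2}^{1}$ regularity). The real content of the statement lies in the two derivative formulas (\ref{eq:TransOpS-1}) and (\ref{eq: TransOpPhi-1}), and my plan is to obtain them by differentiating the integral representations in $x$ and then identifying the diagonal values of the transmutation kernels.

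First I would differentiate (\ref{eq: TransOpPhi}) with respect to $x$ by the Leibniz rule for an integral with a variable upper limit, obtaining
\[
\phi'(\rho,x)=-\rho\sin(\rho x)+\boldsymbol{\mathcal{G}}(x,x)\cos(\rho x)+\int_{0}^{x}\boldsymbol{\mathcal{G}}_{1}(x,t)\cos(\rho t)\,dt,
\]
and likewise differentiate (\ref{eq:TransOpS}), which produces the boundary term $\frac{1}{\rho}\boldsymbol{\mathcal{S}}(x,x)\sin(\rho x)$ and the integral of $\boldsymbol{\mathcal{S}}_{1}$. Comparing these with the claimed (\ref{eq: TransOpPhi-1}) and (\ref{eq:TransOpS-1}), the formulas reduce precisely to the two diagonal (Goursat) identities
\[
\boldsymbol{\mathcal{S}}(x,x)=\omega(x),\qquad\boldsymbol{\mathcal{G}}(x,x)=h+\omega(x).
\]

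To establish these, I would substitute (\ref{eq: TransOpPhi}) into (\ref{eq:PrincipalEq}), compute $\phi''$ (again by Leibniz, producing the total diagonal derivative $\frac{d}{dx}\boldsymbol{\mathcal{G}}(x,x)=\boldsymbol{\mathcal{G}}_{1}(x,x)+\boldsymbol{\mathcal{G}}_{2}(x,x)$), and integrate the term $-\rho^{2}\int_{0}^{x}\boldsymbol{\mathcal{G}}\cos(\rho t)\,dt$ by parts twice against $\partial_{t}^{2}\cos(\rho t)=-\rho^{2}\cos(\rho t)$. Matching the coefficient of $\cos(\rho x)$ on the two sides yields the relation $\frac{d}{dx}\boldsymbol{\mathcal{G}}(x,x)=\frac{1}{2}q(x)$, while evaluating $\phi'(\rho,0)=h$ in the expression above fixes $\boldsymbol{\mathcal{G}}(0,0)=h$; integrating gives $\boldsymbol{\mathcal{G}}(x,x)=h+\frac{1}{2}\int_{0}^{x}q(s)\,ds=h+\omega(x)$. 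The same computation applied to (\ref{eq:TransOpS}), where the boundary term at $t=0$ forces $\boldsymbol{\mathcal{S}}(x,0)=0$ (hence $\boldsymbol{\mathcal{S}}(0,0)=0$), produces $\frac{d}{dx}\boldsymbol{\mathcal{S}}(x,x)=\frac{1}{2}q(x)$ and therefore $\boldsymbol{\mathcal{S}}(x,x)=\omega(x)$.

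The main obstacle is not the algebra but its justification at the regularity available here: since $q\in\mathcal{L}_{2}$ the kernels lie only in $W_{2}^{1}$, so $\boldsymbol{\mathcal{G}}_{1}$ and $\boldsymbol{\mathcal{S}}_{1}$ exist merely as $\mathcal{L}_{2}$ functions and the second-order manipulations above do not hold pointwise. I would therefore perform the differentiation and the repeated integration by parts in the weak (distributional) sense, or, more transparently, first prove the identities for smooth $q$ — where every step is classical — and then pass to the $\mathcal{L}_{2}$ limit, using the continuous dependence of the transmutation kernels on $q$ (the $W_{2}^{1}$ bounds of \cite{Campos}) both to transfer the diagonal conditions and to secure the membership $\boldsymbol{\mathcal{S}}_{1}(x,\cdot),\boldsymbol{\mathcal{G}}_{1}(x,\cdot)\in\mathcal{L}_{2}[0,x]$ in the general case.
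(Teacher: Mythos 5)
Your proposal is correct in substance, but it is worth knowing that the paper itself offers no proof of this theorem at all: it is stated as a recalled classical result, with the existence of the kernels and representations \eqref{eq:TransOpS}--\eqref{eq: TransOpPhi} attributed to the transmutation-operator literature (Marchenko, Yurko, Freiling--Yurko p.~22) and the $W_{2}^{1}$ regularity to Proposition~19 of \cite{Campos}; the derivative formulas \eqref{eq:TransOpS-1}--\eqref{eq: TransOpPhi-1} are likewise taken as known. So your reconstruction is a genuinely different (more self-contained) route. It is also the standard one, and your reductions are accurate: differentiating \eqref{eq:TransOpS} and \eqref{eq: TransOpPhi} in $x$ reduces everything to the Goursat diagonal identities $\boldsymbol{\mathcal{S}}(x,x)=\omega(x)$, $\boldsymbol{\mathcal{G}}(x,x)=h+\omega(x)$, which are exactly the conditions \eqref{eq:TransOpSxx}--\eqref{eq:TransOpGxx} the paper records later (there under $q\in W_{2}^{1}$, citing \cite{Kravchenko2016}, though they hold already for $q\in\mathcal{L}_{2}$). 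Your double integration by parts, including the observation that the non-oscillating boundary term at $t=0$ forces $\boldsymbol{\mathcal{S}}(x,0)=0$, and the evaluation $\phi'(\rho,0)=h$ fixing $\boldsymbol{\mathcal{G}}(0,0)=h$, is the classical Goursat-problem derivation. What your route buys is transparency about where the coefficients $\omega(x)$ and $h+\omega(x)$ in \eqref{eq:TransOpS-1}--\eqref{eq: TransOpPhi-1} come from; what the paper's citation buys is economy and an immediate guarantee at the stated $\mathcal{L}_{2}$ regularity.

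One caveat on your limiting argument: Proposition~19 of \cite{Campos} gives regularity and bounds for the kernels in the \emph{second} variable ($\boldsymbol{\mathcal{S}}(x,\cdot),\boldsymbol{\mathcal{G}}(x,\cdot)\in W_{2}^{1}[0,x]$), which by itself does not control $\boldsymbol{\mathcal{S}}_{1}(x,\cdot)$ and $\boldsymbol{\mathcal{G}}_{1}(x,\cdot)$, nor their continuous dependence on $q$, which your smoothing argument needs in order to pass to the limit inside the integrals and to secure $\boldsymbol{\mathcal{S}}_{1}(x,\cdot),\boldsymbol{\mathcal{G}}_{1}(x,\cdot)\in\mathcal{L}_{2}[0,x]$. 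For that you should instead invoke the successive-approximation estimates for the Goursat problem in characteristic variables (Marchenko \S 1.2, or \cite[p.~22]{FreYurko}), which bound both first partials of the kernel in terms of $\|q\|_{\mathcal{L}_{2}}$ and yield the required continuity of $q\mapsto(\boldsymbol{\mathcal{S}},\boldsymbol{\mathcal{S}}_{1},\boldsymbol{\mathcal{S}}_{2})$. This is a citation fix rather than a gap in the idea; with it, your plan closes correctly.
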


Assume, $q\in W_{2}^{1}\left[0,L\right]$. The transmutation kernels
$\boldsymbol{\mathcal{S}}(x,t)$ and $\boldsymbol{\mathcal{G}}(x,t)$
defined on the domain $0\leq t\leq x\leq L$ are twice differentiable
with respect to each of the arguments and satisfy the equalities \cite{Kravchenko2016}
\begin{align}
\boldsymbol{\mathcal{S}}(x,x) & =\omega(x),\boldsymbol{\mathcal{\:S}}(x,0)=0,\label{eq:TransOpSxx}\\
\boldsymbol{\mathcal{G}}(x,x) & =h+\omega(x),\:\boldsymbol{\mathcal{G}}_{2}(x,0)=0,\label{eq:TransOpGxx}
\end{align}
\begin{equation}
\boldsymbol{\mathcal{S}}_{2}(x,x)=\frac{1}{4}\left(q(x)-2\omega^{2}(x)+q(0)\right),\,\,\boldsymbol{\mathcal{S}}_{2}(x,0)=\frac{1}{2}q\left(\frac{x}{2}\right),\label{eq:DerivativesKernels}
\end{equation}
\begin{equation}
\boldsymbol{\mathcal{\mathcal{G}}}_{2}(x,x)=\frac{1}{4}\left(q(x)-4h\omega(x)-2\omega^{2}(x)-q(0)\right),\label{eq:DerKernelG}
\end{equation}

\[
\boldsymbol{\mathcal{S}}_{22}(x,x)=\frac{1}{8}\left(q'(x)-q'(0)-2q(x)\omega(x)-2q(0)\omega(x)-\int_{0}^{x}q^{2}(s)ds+\frac{8}{6}\omega^{3}(x)\right)
\]

and 

\[
\boldsymbol{\mathcal{G}}_{22}(x,x)=\frac{1}{8}\left(q'(x)+q'(0)-2q(x)\omega(x)+2q(0)\omega(x)-\int_{0}^{x}q^{2}(s)ds+\frac{8}{6}\omega^{3}(x)\right),
\]
where $\boldsymbol{\mathcal{S}}_{22}(x,\cdot),$ $\boldsymbol{\mathcal{G}}_{22}(x,\cdot)\in\mathcal{L}_{2}\left[0,x\right]$.
\begin{thm}
\label{thm:qC1SolutionsInegralRepSPhi}Let $q\in W_{2}^{1}\left[0,L\right]$.
The solutions $S(\rho,x)$ and $\phi(\rho,x)$ admit the integral
representations
\begin{align}
S(\rho,x) & =\frac{\sin(\rho x)}{\rho}-\frac{\cos(\rho x)}{\rho^{2}}\omega(x)+\frac{\text{\ensuremath{\sin}}(\rho x)}{4\rho^{3}}\left(q(x)-2\omega^{2}(x)+q(0)\right)\label{eq:NewRepIntegralS}\\
 & -\frac{1}{\rho^{3}}\int_{0}^{x}\boldsymbol{\mathcal{S}}_{22}(x,t)\text{\ensuremath{\sin}}(\rho t)dt,\,\rho\in\mathbb{C\setminus}\left\{ 0\right\} ,\nonumber \\
\phi(\rho,x) & =\cos(\rho x)+\frac{\sin(\rho x)}{\rho}(h+\omega(x))+\frac{\cos(\rho x)}{4\rho^{2}}\left(q(x)-4h\omega(x)-2\omega^{2}(x)-q(0)\right)\label{eq:NewRepIntegralPhi}\\
 & -\frac{1}{\rho^{2}}\int_{0}^{x}\mathcal{\boldsymbol{G}}_{22}(x,t)\cos(\rho t)dt,\,\rho\in\mathbb{C\setminus}\left\{ 0\right\} .\nonumber 
\end{align}
\end{thm}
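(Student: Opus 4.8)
The plan is to derive both identities directly from the first-order transmutation representations (\ref{eq:TransOpS}) and (\ref{eq: TransOpPhi}) by integrating by parts twice in the kernel integrals, substituting at each stage the boundary values of the kernels and of their derivatives in the second argument recorded in (\ref{eq:TransOpSxx})--(\ref{eq:DerKernelG}). The hypothesis $q\in W_{2}^{1}[0,L]$ guarantees, as noted just before the statement, that $\boldsymbol{\mathcal{S}}(x,\cdot)$ and $\boldsymbol{\mathcal{G}}(x,\cdot)$ are twice differentiable in $t$ with $\boldsymbol{\mathcal{S}}_{22}(x,\cdot),\boldsymbol{\mathcal{G}}_{22}(x,\cdot)\in\mathcal{L}_{2}[0,x]$; this is exactly the regularity needed to legitimize the two integrations by parts and to ensure that the resulting remainder integrals are well defined with $\mathcal{L}_{2}$ densities.

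For $S(\rho,x)$ I would start from the integral in (\ref{eq:TransOpS}) and integrate by parts once, antidifferentiating $\sin(\rho t)$ to $-\cos(\rho t)/\rho$. The boundary term at $t=x$ contributes $-\boldsymbol{\mathcal{S}}(x,x)\cos(\rho x)/\rho=-\omega(x)\cos(\rho x)/\rho$ by (\ref{eq:TransOpSxx}), while the term at $t=0$ vanishes since $\boldsymbol{\mathcal{S}}(x,0)=0$. A second integration by parts, now antidifferentiating $\cos(\rho t)$ to $\sin(\rho t)/\rho$, yields a boundary term at $t=x$ equal to $\boldsymbol{\mathcal{S}}_{2}(x,x)\sin(\rho x)/\rho$, into which I insert $\boldsymbol{\mathcal{S}}_{2}(x,x)=\tfrac{1}{4}(q(x)-2\omega^{2}(x)+q(0))$ from (\ref{eq:DerivativesKernels}); the $t=0$ term drops out merely because $\sin 0=0$. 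The leftover integral is $-\tfrac{1}{\rho}\int_{0}^{x}\boldsymbol{\mathcal{S}}_{22}(x,t)\sin(\rho t)\,dt$, and multiplying everything by the overall prefactor $1/\rho$ reproduces (\ref{eq:NewRepIntegralS}) term by term.

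For $\phi(\rho,x)$ the computation mirrors the previous one with the roles of sine and cosine interchanged, starting from (\ref{eq: TransOpPhi}). The first integration by parts antidifferentiates $\cos(\rho t)$ and yields the $t=x$ contribution $\boldsymbol{\mathcal{G}}(x,x)\sin(\rho x)/\rho=(h+\omega(x))\sin(\rho x)/\rho$ via (\ref{eq:TransOpGxx}), the $t=0$ term again vanishing by $\sin 0=0$. The second integration by parts produces a $t=x$ boundary term governed by $\boldsymbol{\mathcal{G}}_{2}(x,x)=\tfrac{1}{4}(q(x)-4h\omega(x)-2\omega^{2}(x)-q(0))$ from (\ref{eq:DerKernelG}) and the remainder $-\tfrac{1}{\rho^{2}}\int_{0}^{x}\boldsymbol{\mathcal{G}}_{22}(x,t)\cos(\rho t)\,dt$, giving (\ref{eq:NewRepIntegralPhi}). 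The one genuinely delicate point, and the place where the two cases are asymmetric, is the $t=0$ boundary term in this last step: here antidifferentiating $\sin(\rho t)$ to $-\cos(\rho t)/\rho$ leaves a term proportional to $\boldsymbol{\mathcal{G}}_{2}(x,0)$, which does \emph{not} vanish automatically and is killed only by the Neumann-type condition $\boldsymbol{\mathcal{G}}_{2}(x,0)=0$ in (\ref{eq:TransOpGxx}) (note that the analogous $\boldsymbol{\mathcal{S}}_{2}(x,0)=\tfrac{1}{2}q(x/2)$ in (\ref{eq:DerivativesKernels}) is generally nonzero, so in the $S$-case the corresponding term had to disappear for the trivial reason $\sin 0=0$ instead). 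Verifying this cancellation, together with confirming the stated kernel regularity, is the only substantive part of the argument; the remaining algebra is a routine bookkeeping of the surviving boundary values.
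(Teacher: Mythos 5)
Your proposal is correct and follows essentially the same route as the paper's own proof: two successive integrations by parts in (\ref{eq:TransOpS}) and (\ref{eq: TransOpPhi}), inserting the boundary values from (\ref{eq:TransOpSxx})--(\ref{eq:DerKernelG}) at each stage, with the $t=0$ term in the $\phi$-case killed by $\boldsymbol{\mathcal{G}}_{2}(x,0)=0$ exactly as the paper does via (\ref{eq:TransOpGxx}). Your remark on the asymmetry between the two cases (the nonvanishing $\boldsymbol{\mathcal{S}}_{2}(x,0)$ being harmless only because $\sin 0=0$) is a point the paper leaves implicit, but it is the same argument.
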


\begin{proof}
The proof is analogous to the proof of Proposition 5 in \cite{Vk2017}.
Integrating by parts in (\ref{eq:TransOpS}) and using (\ref{eq:TransOpSxx})
we obtain

\begin{equation}
S(\rho,x)=\frac{\sin(\rho x)}{\rho}-\frac{\cos(\rho x)}{\rho^{2}}\omega(x)+\frac{1}{\rho{{}^2}}\int_{0}^{x}\boldsymbol{\mathcal{S}}_{2}(x,t)\cos(\rho t)dt.\label{eq:AuxS}
\end{equation}
Similarly, integration by parts applied to the integral in (\ref{eq:AuxS})
and taking into account (\ref{eq:DerivativesKernels}) leads to the
equalities 
\begin{align}
\int_{0}^{x}\boldsymbol{\mathcal{S}}_{2}(x,t)\cos(\rho t)dt & =\left.\boldsymbol{\mathcal{S}}_{2}(x,t)\frac{\sin(\rho t)}{\rho}\right|_{t=x}-\int_{0}^{x}\boldsymbol{\mathcal{S}}_{22}(x,t)\frac{\sin(\rho t)}{\rho}dt\nonumber \\
 & =\frac{\sin(\rho x)}{4\rho}\left(q(x)-2\omega^{2}(x)+q(0)\right)-\frac{1}{\rho}\int_{0}^{x}\boldsymbol{\mathcal{S}}_{22}(x,t)\sin(\rho t)dt.\label{eq:intS2}
\end{align}

Substitution of (\ref{eq:intS2}) into (\ref{eq:AuxS}) leads to (\ref{eq:NewRepIntegralS}).

Integrating by parts in (\ref{eq: TransOpPhi}) and using (\ref{eq:TransOpGxx})
we obtain
\begin{equation}
\phi(\rho,x)=\cos(\rho x)+\frac{\sin(\rho x)}{\rho}(h+\omega(x))-\frac{1}{\rho}\int_{0}^{x}\boldsymbol{\mathcal{\mathcal{G}}}_{2}(x,t)\sin(\rho t)dt.\label{eq:AuxG}
\end{equation}

Integration by parts applied to the integral in (\ref{eq:AuxG}) and
taking into account (\ref{eq:TransOpGxx}) and (\ref{eq:DerKernelG})
gives 
\begin{align}
\int_{0}^{x}\boldsymbol{\mathcal{G}}_{2}(x,t)\sin(\rho t)dt & =\left.-\mathcal{\boldsymbol{G}}_{2}(x,t)\frac{\cos(\rho t)}{\rho}\right|_{t=x}+\int_{0}^{x}\mathcal{\boldsymbol{G}}_{22}(x,t)\frac{\cos(\rho t)}{\rho}dt\nonumber \\
 & =-\frac{\cos(\rho x)}{4\rho}\left(q(x)-4h\omega(x)-2\omega^{2}(x)-q(0)\right)+\frac{1}{\rho}\int_{0}^{x}\mathcal{\boldsymbol{G}}_{22}(x,t)\cos(\rho t)dt.\label{eq:intS2-1}
\end{align}

Finally, substitution of (\ref{eq:intS2-1}) into (\ref{eq:AuxG})
leads to (\ref{eq:NewRepIntegralPhi}). 
\end{proof}

\section{Neumann series of Bessel functions representations\label{sec:NSBF-representations-for}}

In \cite{Vk2017NSBF} the following Neumann series of Bessel functions
(NSBF) representations for solutions $S\left(\rho,x\right)$ and $\phi(\rho,x)$
of (\ref{eq:PrincipalEq}) were obtained:

\begin{align}
S(\rho,x) & =\frac{\sin(\rho x)}{\rho}+\frac{1}{\rho}\sum_{n=0}^{\infty}(-1)^{n}\boldsymbol{s}_{n}(x)j_{2n+1}(\rho x),\label{eq:NsbfS}\\
\phi(\rho,x) & =\cos(\rho x)+\sum_{n=0}^{\infty}(-1)^{n}g_{n}(x)j_{2n}(\rho x),\label{eq:NsbfPhi}
\end{align}
where $j_{k}(z)$ stands for the spherical Bessel function of order
$k$ (see, e.g., \cite[p. 437]{Abramo}). The coefficients $\boldsymbol{s}_{n}(x)$
and $g_{n}(x)$ can be calculated following a simple recurrent integration
procedure (see \cite{Vk2017NSBF} or \cite[Sect. 9.4]{VK2020}). For
every $\rho\in\mathbb{C}$ the series converge pointwise, and for
every $x\in[0,L]$ the convergence is uniform on any compact set of
the complex plane of the variable $\rho$.

Since in this work we deal with several series representations for
solutions of (\ref{eq:PrincipalEq}), we need to introduce notations
for the corresponding partial sums. The partial sums of (\ref{eq:NsbfS})
and (\ref{eq:NsbfPhi}) are denoted by
\begin{align}
\hat{S}_{N}(\rho,x) & =\frac{\sin(\rho x)}{\rho}+\frac{1}{\rho}\sum_{n=0}^{N-1}(-1)^{n}\boldsymbol{s}_{n}(x)j_{2n+1}(\rho x),\label{eq:NsbfS-1}\\
\hat{\phi}_{N}(\rho,x) & =\cos(\rho x)+\sum_{n=0}^{N-1}(-1)^{n}g_{n}(x)j_{2n}(\rho x).\label{eq:NsbfPhi-1}
\end{align}

For any $\rho\in\mathbb{C\setminus}\left\{ 0\right\} $ belonging
to the strip $|\ii\rho|<C$, $C\geq0$ the inequalities hold \cite{Vk2017NSBF}
\[
\left|\rho S(\rho,x)-\rho\hat{S}_{N}(\rho,x)\right|\leq\frac{\varepsilon_{N}(x)\sinh\left(Cx\right)}{C}\,\,\text{and}\,\,\text{\ensuremath{\left|\phi(\rho,x)-\hat{\phi}_{N}(\rho,x)\right|\leq\frac{\varepsilon_{N}(x)\sinh\left(Cx\right)}{C}}.}
\]

Here and everywhere below the same symbol $\varepsilon_{N}(x)$ stands
for a positive function tending to zero when $N\rightarrow\infty$,
which is independent of $\rho$. Note that $S(0,x)=\hat{S}_{0}(0,x)$
and $\phi(0,x)=\hat{\phi}_{0}(0,x)$.

Thus, the remainders of the partial sums admit estimates, which are
independent of the real part of\textrm{ $\rho$. }This is extremely
useful when using the partial sums as approximate solutions for solving
direct or inverse problems on large ranges of the spectral parameter.

For smooth potentials ($q\in C^{1}\left[0,L\right]$) another NSBF
representation for $S(\pp,x)$ and \textrm{$\text{\ensuremath{\phi(\rho,x)}}$}
was obtained in \cite{VKComplex} with the aid of results from \cite{Vk2017}.

Hereinafter, denote 
\begin{align*}
q^{+}(x) & :=\frac{q(x)+q(0)}{4}-\frac{\omega^{2}(x)}{2},\,\,q^{h}(x):=\frac{q(x)-q(0)}{4}-\frac{\omega^{2}(x)}{2}-h\omega(x),\,\,\omega_{L}(x):=\frac{1}{2}\int_{x}^{L}q(s)ds,\\
 & q_{L}(x):=\frac{q(x)+q(L)}{4}-\frac{\omega_{L}^{2}(x)}{2},\,\,q_{L}^{H}(x):=\frac{q(x)-q(L)}{4}-\frac{\omega_{L}^{2}(x)}{2}-H\omega_{L}(x).
\end{align*}

\begin{thm}
Let $q\in C^{1}\left[0,L\right]$. The solutions $S(\pp,x)$ and $\phi(\rho,x)$
of equation (\ref{eq:PrincipalEq}) admit the following representations

\begin{align}
S\left(\rho,x\right) & =\frac{\sin(\rho x)}{\rho}-\frac{\cos(\rho x)}{\rho^{2}}\omega(x)+\frac{q^{+}(x)}{\rho^{3}}\left(\sin(\rho x)-3j_{1}(\rho x)\right)-\frac{1}{\rho^{3}}\sum_{n=1}^{\infty}(-1)^{n}\beta_{n}(x)j_{2n+1}(\rho x)\label{eq:NewSNSBF}
\end{align}

and 
\begin{align}
\phi(\rho,x) & =\cos(\rho x)+\frac{\sin(\rho x)}{\rho}(h+\omega(x))-\frac{xj_{1}(\rho x)}{\rho}q^{h}(x)-\frac{1}{\rho^{2}}\sum_{n=1}^{\infty}(-1)^{n}\alpha_{n}(x)j_{2n}(\rho x).\label{eq:NewPhiNSBF-1}
\end{align}

For every $\rho\in\mathbb{C}$ the series in (\ref{eq:NewSNSBF})
and (\ref{eq:NewPhiNSBF-1}) converge pointwise, and for every $x\in[0,L]$
the convergence is uniform on any compact set of the complex plane
of the variable $\rho$. Moreover, for any $\rho\in\mathbb{C\setminus}\left\{ 0\right\} $
belonging to the strip $|\ii\rho|<C$, the estimates hold

\begin{equation}
\left|S(\rho,x)-\hat{\mathbb{\mathtt{s}}}_{N}(\rho,x)\right|\leq\frac{\varepsilon_{N}(x)}{\left|\rho\right|^{3}}\sqrt{\frac{\sinh(2Cx)}{C}}\,\,\text{and}\,\,\text{\ensuremath{\left|\phi(\rho,x)-\hat{\Phi}_{N}(\rho,x)\right|\leq\frac{\varepsilon_{N}(x)}{\left|\rho\right|^{2}}\sqrt{\frac{\sinh(2Cx)}{C},}}}\label{eq:Estimates NSBF new}
\end{equation}
where 
\begin{align}
\hat{\mathbb{\mathtt{s}}}_{N}\left(\rho,x\right) & :=\frac{\sin(\rho x)}{\rho}-\frac{\cos(\rho x)}{\rho^{2}}\omega(x)+\frac{q^{+}(x)}{\rho^{3}}\left(\sin(\rho x)-3j_{1}(\rho x)\right)-\frac{1}{\rho^{3}}\sum_{n=1}^{N}(-1)^{n}\beta_{n}(x)j_{2n+1}(\rho x),\label{eq:PartialNewSNSBF}
\end{align}

\begin{align*}
\hat{\Phi}_{N}(\rho,x) & :=\cos(\rho x)+\frac{\sin(\rho x)}{\rho}(h+\omega(x))-\frac{xj_{1}(\rho x)}{\rho}q^{h}(x)-\frac{1}{\rho^{2}}\sum_{n=1}^{N}(-1)^{n}\alpha_{n}(x)j_{2n}(\rho x),
\end{align*}
and $\varepsilon_{N}(x)$ is a positive function tending to zero when
$N\rightarrow\infty$.
\end{thm}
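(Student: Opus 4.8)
The plan is to derive both representations directly from the integral representations (\ref{eq:NewRepIntegralS}) and (\ref{eq:NewRepIntegralPhi}) of Theorem \ref{thm:qC1SolutionsInegralRepSPhi}, turning the two residual integrals $\int_0^x\boldsymbol{\mathcal{S}}_{22}(x,t)\sin(\rho t)\,dt$ and $\int_0^x\boldsymbol{\mathcal{G}}_{22}(x,t)\cos(\rho t)\,dt$ into Neumann series of spherical Bessel functions. Since $\boldsymbol{\mathcal{S}}_{22}(x,\cdot),\boldsymbol{\mathcal{G}}_{22}(x,\cdot)\in\mathcal{L}_2[0,x]$, I would extend $\boldsymbol{\mathcal{S}}_{22}(x,\cdot)$ as an odd function and $\boldsymbol{\mathcal{G}}_{22}(x,\cdot)$ as an even function to $[-x,x]$, rescale $t=xu$, and expand the resulting functions of $u$ in the Legendre polynomials $P_{2n+1}(u)$ and $P_{2n}(u)$ respectively. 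The single identity driving the whole computation is $\int_{-1}^1 P_k(u)e^{izu}\,du=2i^k j_k(z)$, whose imaginary part for odd $k$ and real part for even $k$ give $\int_{-1}^1 P_{2n+1}(u)\sin(\rho x u)\,du=2(-1)^n j_{2n+1}(\rho x)$ and $\int_{-1}^1 P_{2n}(u)\cos(\rho x u)\,du=2(-1)^n j_{2n}(\rho x)$. Feeding the Legendre expansions through these formulas converts the sine integral into a series in $j_{2n+1}(\rho x)$ and the cosine integral into a series in $j_{2n}(\rho x)$, with coefficients $\beta_n(x)$ and $\alpha_n(x)$ proportional to the Legendre coefficients of the extended kernels.

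Reorganising the lowest-order term so as to expose the explicit $q^+$ and $q^h$ contributions is the step I expect to demand the most care. For $\phi$ I would use the elementary identity $\cos(\rho x)=j_0(\rho x)-\rho x\,j_1(\rho x)$, which splits the explicit term $\frac{q^h(x)}{\rho^2}\cos(\rho x)$ of (\ref{eq:NewRepIntegralPhi}) into the desired $-\frac{x j_1(\rho x)}{\rho}q^h(x)$ plus a $j_0(\rho x)$ piece; the latter cancels exactly against the $n=0$ term of the cosine series because $\int_0^x\boldsymbol{\mathcal{G}}_{22}(x,t)\,dt=\boldsymbol{\mathcal{G}}_2(x,x)-\boldsymbol{\mathcal{G}}_2(x,0)=q^h(x)$, which follows from the boundary relations (\ref{eq:TransOpGxx}) and (\ref{eq:DerKernelG}), so that the series may start at $n=1$. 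For $S$ the $\sin(\rho x)$ term is retained and the $j_1(\rho x)$ contribution is pulled out of the sine series and combined with it as $q^+(x)\bigl(\sin(\rho x)-3j_1(\rho x)\bigr)$; the matching of the coefficients here rests on the boundary values (\ref{eq:TransOpSxx}) and (\ref{eq:DerivativesKernels}), and because the $n=0$ term of the sine series carries the weight $P_1(u)=u$ rather than a constant, this computation is genuinely more delicate than in the cosine case and is where I would concentrate the verification.

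For the convergence statements I would argue by Cauchy--Schwarz: Parseval's theorem for the Legendre system places the coefficient sequences in $\ell^2$, and for $\rho$ ranging over a compact set the sum $\sum_n|j_{2n+1}(\rho x)|^2$ is finite, so both series converge, uniformly on compacts. The strip estimates rest on one further ingredient, a Parseval identity for the spherical Bessel functions themselves: expanding $e^{i\rho x u}$ in Legendre polynomials and applying Parseval on $\mathcal{L}_2[-1,1]$ yields $\sum_{n=0}^\infty(2n+1)|j_n(\rho x)|^2=\frac{\sinh(2x\,\text{Im}\rho)}{2x\,\text{Im}\rho}$. Since $s\mapsto\sinh(s)/s$ is increasing, on the strip $|\text{Im}\rho|\le C$ this is bounded, uniformly in $\text{Re}\,\rho$, by $\frac{\sinh(2Cx)}{2Cx}$. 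Estimating the remainder of the series by $\bigl(\sum_{n>N}|\beta_n(x)|^2\bigr)^{1/2}\bigl(\sum_{n>N}|j_{2n+1}(\rho x)|^2\bigr)^{1/2}$, the second factor is controlled by this bound while the first factor is the tail of a convergent $\ell^2$ series and hence defines the function $\varepsilon_N(x)\to0$; absorbing the $x$-dependent constant $\tfrac1{\sqrt{2x}}$ into $\varepsilon_N(x)$ gives the stated bound with the factor $\sqrt{\sinh(2Cx)/C}$, and the same argument with $j_{2n}$ and the power $\rho^2$ handles $\phi$. The uniformity in $\text{Re}\,\rho$, which is the whole purpose of the NSBF form, is precisely what this Bessel--Parseval bound supplies.
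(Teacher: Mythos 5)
Your plan is, in method, exactly the derivation behind this theorem: the paper itself gives no proof (it imports the statement from \cite{Vk2017} and \cite{VKComplex}), and those works proceed precisely as you propose, by Fourier--Legendre expansion of the kernels of Theorem \ref{thm:qC1SolutionsInegralRepSPhi}, the identity $\int_{-1}^{1}P_{k}(u)e^{izu}du=2i^{k}j_{k}(z)$, and Cauchy--Schwarz combined with the bound $\sum_{n=0}^{\infty}(2n+1)\left|j_{n}(z)\right|^{2}=\frac{\sinh(2\ii z)}{2\ii z}$ (cf.\ Subsection \ref{subsec:Error-estimates} and Theorem 2.2 of \cite{VKComplex}). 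Your $\phi$ half is correct and complete in outline: by (\ref{eq:TransOpGxx}) and (\ref{eq:DerKernelG}) one has $\int_{0}^{x}\boldsymbol{\mathcal{G}}_{22}(x,t)\,dt=\boldsymbol{\mathcal{G}}_{2}(x,x)-\boldsymbol{\mathcal{G}}_{2}(x,0)=q^{h}(x)$, so the $j_{0}$ piece of $\cos z=j_{0}(z)-zj_{1}(z)$ cancels the $n=0$ term and the series legitimately starts at $n=1$. One small repair: Parseval for the Legendre system places $\left\{ b_{n}/\sqrt{4n+3}\right\} $, not $\left\{ b_{n}\right\} $, in $\ell^{2}$, so in the Cauchy--Schwarz step you must pair $\left|b_{n}\right|/\sqrt{4n+3}$ with $\sqrt{4n+3}\left|j_{2n+1}(\rho x)\right|$; with that weight distribution your strip estimate comes out exactly as stated.

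The genuine gap sits at the one step you flagged and left unverified, and it does not close as you assert. For the odd extension the $n=0$ weight is $P_{1}(u)=u$, so the first Legendre coefficient of $\boldsymbol{\mathcal{S}}_{22}(x,xu)$ is $b_{0}=\frac{3}{x^{2}}\int_{0}^{x}t\,\boldsymbol{\mathcal{S}}_{22}(x,t)\,dt$, and integration by parts with (\ref{eq:TransOpSxx}) and (\ref{eq:DerivativesKernels}) gives
\begin{equation*}
\int_{0}^{x}t\,\boldsymbol{\mathcal{S}}_{22}(x,t)\,dt=x\,\boldsymbol{\mathcal{S}}_{2}(x,x)-\bigl(\boldsymbol{\mathcal{S}}(x,x)-\boldsymbol{\mathcal{S}}(x,0)\bigr)=x\,q^{+}(x)-\omega(x),
\end{equation*}
not $x\,q^{+}(x)$. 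Consequently the $n=0$ term of the sine series is $3\bigl(q^{+}(x)-\frac{\omega(x)}{x}\bigr)j_{1}(\rho x)$, and your procedure actually produces
\begin{equation*}
S(\rho,x)=\frac{\sin(\rho x)}{\rho}-\frac{\cos(\rho x)}{\rho^{2}}\omega(x)+\frac{3\omega(x)}{x\rho^{3}}j_{1}(\rho x)+\frac{q^{+}(x)}{\rho^{3}}\bigl(\sin(\rho x)-3j_{1}(\rho x)\bigr)-\frac{1}{\rho^{3}}\sum_{n=1}^{\infty}(-1)^{n}\beta_{n}(x)j_{2n+1}(\rho x),
\end{equation*}
with an extra term $\frac{3\omega(x)}{x\rho^{3}}j_{1}(\rho x)$ absent from (\ref{eq:NewSNSBF}). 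This is not a removable blemish of your argument but a real obstruction to the asserted endpoint: multiply (\ref{eq:NewSNSBF}) by $\rho^{3}$ and compare Taylor coefficients at $\rho=0$; the series $\sum_{n\geq1}$ is $O(\rho^{3})$ (uniform convergence on compacts plus the triple zero of each $j_{2n+1}$, $n\geq1$), as are $\rho^{2}\sin(\rho x)$ and $q^{+}(x)(\sin(\rho x)-3j_{1}(\rho x))$, while $-\omega(x)\rho\cos(\rho x)$ contributes $-\omega(x)\rho$; so (\ref{eq:NewSNSBF}) as displayed is incompatible with the paper's own (\ref{eq:NewRepIntegralS}) whenever $\omega(x)\neq0$, whereas the corrected formula above balances at order $\rho$ since $\frac{3\omega(x)}{x}j_{1}(\rho x)=\omega(x)\rho+O(\rho^{3})$. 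In short: the clean cancellation you exploited for $\phi$ (weight $P_{0}=1$ together with $\boldsymbol{\mathcal{G}}_{2}(x,0)=0$) has no analogue for $S$, because $P_{1}$ drags in $\boldsymbol{\mathcal{S}}(x,x)-\boldsymbol{\mathcal{S}}(x,0)=\omega(x)$; your derivation must retain the $j_{1}$ term (equivalently, group it with the second term as $-\frac{\omega(x)}{\rho^{2}}\bigl(\cos(\rho x)-\frac{3j_{1}(\rho x)}{\rho x}\bigr)$), and the matching ``$q^{+}(\sin-3j_{1})$ with the series starting at $n=1$'' that your proposal asserts cannot be reached as written.
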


We notice that the estimates (\ref{eq:Estimates NSBF new}) guarantee
an ever-improving accuracy of the approximation by the partial sums
for larger values of $\left|\text{Re}\rho\right|$.
\begin{rem}
Let $q\in C^{1}\left[0,L\right]$. The solution $\psi(\pp,x)$ of
equation (\ref{eq:PrincipalEq}) admits the NSBF representation

\begin{align}
\psi(\rho,x) & =\cos(\rho(L-x))+\frac{\sin(\rho(L-x))}{\rho}(\omega_{L}(x)+H)\nonumber \\
 & -\frac{(L-x)j_{1}(\rho(L-x))}{\rho}q_{L}^{H}(x)-\frac{1}{\rho^{2}}\sum_{n=1}^{\infty}(-1)^{n}\theta_{n}(x)j_{2n}(\rho(L-x)).\label{eq:NewNsbfPsi}
\end{align}
\end{rem}

The NSBF representations (\ref{eq:NewSNSBF}), (\ref{eq:NewPhiNSBF-1})
and (\ref{eq:NewNsbfPsi}) remain valid for potentials from $W_{2}^{1}\left[0,L\right]$. 

\section{Cardinal series representations for solutions\label{sec:Cardinal-series-representation}}

To obtain the cardinal series representations used in the present
work, we start with the Fourier series expansions of the transmutation
kernels $\mathcal{\boldsymbol{S}}(x,t)$ and $\mathcal{\boldsymbol{G}}(x,t)$.
\begin{prop}
\label{thm:The-transmutations-kernelExpan}The transmutation kernel
$\mathcal{\boldsymbol{S}}(x,t)$ admits the series representation
\begin{equation}
\mathcal{\boldsymbol{S}}(x,t)=\sum_{n=0}^{\infty}\frac{b_{n}(x)}{x}\sin\left(\frac{2n+1}{2x}\pi t\right),\label{eq:Series Kernel S}
\end{equation}
where for every $x\in(0,L]$ the series converges absolutely and uniformly
with respect to $t\in[0,x]$.
\end{prop}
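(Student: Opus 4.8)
The plan is to recognize the trigonometric system $\{\sin(\mu_n t)\}_{n\ge0}$ with $\mu_n:=\frac{(2n+1)\pi}{2x}$ as a complete orthogonal system in $\mathcal{L}_2[0,x]$, namely the eigenfunctions of $-u''=\lambda u$ on $[0,x]$ subject to the mixed conditions $u(0)=0$, $u'(x)=0$. By Theorem \ref{thm: FourierRepTransKernel} we have $\boldsymbol{\mathcal{S}}(x,\cdot)\in W_2^1[0,x]\subset\mathcal{L}_2[0,x]$, so it admits an expansion in this basis that converges at least in $\mathcal{L}_2[0,x]$. Since $\int_0^x\sin^2(\mu_n t)\,dt=x/2$, the coefficients are forced to be $\frac{b_n(x)}{x}=\frac{2}{x}\int_0^x\boldsymbol{\mathcal{S}}(x,t)\sin(\mu_n t)\,dt$, which already yields (\ref{eq:Series Kernel S}) with $\mathcal{L}_2$ convergence. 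The real task is to upgrade this to absolute and uniform convergence in $t$.

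To that end I would integrate by parts in the coefficient integral. Writing $\int_0^x\boldsymbol{\mathcal{S}}(x,t)\sin(\mu_n t)\,dt=\left[-\boldsymbol{\mathcal{S}}(x,t)\frac{\cos(\mu_n t)}{\mu_n}\right]_0^x+\frac{1}{\mu_n}\int_0^x\boldsymbol{\mathcal{S}}_2(x,t)\cos(\mu_n t)\,dt$, the crucial observation is that both boundary terms vanish identically: at $t=x$ because $\cos(\mu_n x)=\cos\frac{(2n+1)\pi}{2}=0$, and at $t=0$ because $\boldsymbol{\mathcal{S}}(x,0)=0$ by (\ref{eq:TransOpSxx}). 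Hence $b_n(x)=\frac{2}{\mu_n}\int_0^x\boldsymbol{\mathcal{S}}_2(x,t)\cos(\mu_n t)\,dt$, so that, up to the gained factor $1/\mu_n\sim 1/n$, the $b_n(x)$ are the Fourier coefficients of $\boldsymbol{\mathcal{S}}_2(x,\cdot)$ with respect to the orthogonal cosine system $\{\cos(\mu_n t)\}$ (for which $\int_0^x\cos^2(\mu_n t)\,dt=x/2$ as well).

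Regularity again supplies $\boldsymbol{\mathcal{S}}_2(x,\cdot)\in\mathcal{L}_2[0,x]$, so Bessel's inequality gives that $d_n(x):=\frac{2}{x}\int_0^x\boldsymbol{\mathcal{S}}_2(x,t)\cos(\mu_n t)\,dt$ satisfies $\sum_{n=0}^\infty|d_n(x)|^2\le\frac{2}{x}\|\boldsymbol{\mathcal{S}}_2(x,\cdot)\|_{\mathcal{L}_2}^2<\infty$. Since $\frac{b_n(x)}{x}=\frac{d_n(x)}{\mu_n}=\frac{2x}{(2n+1)\pi}d_n(x)$, the Cauchy--Schwarz inequality yields $\sum_{n=0}^\infty\left|\frac{b_n(x)}{x}\right|\le\frac{2x}{\pi}\left(\sum_{n=0}^\infty\frac{1}{(2n+1)^2}\right)^{1/2}\left(\sum_{n=0}^\infty|d_n(x)|^2\right)^{1/2}<\infty$, the first factor converging. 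Because $|\sin(\mu_n t)|\le1$ for every $t$, the Weierstrass $M$-test with $M_n=|b_n(x)/x|$ then gives absolute and uniform convergence of $\sum_n\frac{b_n(x)}{x}\sin(\mu_n t)$ on $[0,x]$; its uniform sum is continuous, agrees a.e.\ with the $\mathcal{L}_2$ limit $\boldsymbol{\mathcal{S}}(x,\cdot)$, and hence coincides with it (the latter being continuous as an element of $W_2^1$).

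The one delicate point, on which everything hinges, is the vanishing of both boundary terms in the integration by parts. It is precisely the compatibility of the boundary datum $\boldsymbol{\mathcal{S}}(x,0)=0$ with the half-integer frequencies $\mu_n$ (forcing $\cos(\mu_n x)=0$) that kills these terms and produces the extra $1/\mu_n$ decay; without it only bare $\mathcal{L}_2$ convergence would be available. The remaining estimates are the standard Bessel and Cauchy--Schwarz arguments and require no further input beyond $\boldsymbol{\mathcal{S}}_2(x,\cdot)\in\mathcal{L}_2[0,x]$.
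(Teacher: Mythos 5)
Your proof is correct, and while it rests on the same expansion in the mixed Dirichlet--Neumann eigenbasis $\left\{ \sin\left(\frac{(2n+1)\pi t}{2x}\right)\right\} _{n\geq0}$, the way you obtain absolute and uniform convergence differs genuinely in execution from the paper's. The paper reduces the claim to a classical cited theorem: it reflects $\boldsymbol{\mathcal{S}}(x,\cdot)$ evenly about $t=x$ onto $[0,2x]$, then oddly onto $[-2x,2x]$, producing an absolutely continuous periodic function with square-integrable derivative, whose Fourier series converges absolutely and uniformly by Bary's theorem; the harmonics surviving the symmetry are exactly the half-integer frequencies in (\ref{eq:Series Kernel S}). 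You instead prove the needed coefficient decay by hand: integration by parts with both boundary terms annihilated (by $\cos(\mu_{n}x)=0$ at $t=x$ and by $\boldsymbol{\mathcal{S}}(x,0)=0$ at $t=0$), Bessel's inequality for the cosine coefficients of $\boldsymbol{\mathcal{S}}_{2}(x,\cdot)\in\mathcal{L}_{2}[0,x]$, Cauchy--Schwarz, and the Weierstrass $M$-test --- which is essentially the proof of Bary's theorem unwound in this particular basis. What your route buys is transparency: it isolates exactly where the two compatibility facts enter, namely that the half-integer frequencies impose no condition at $t=x$ (where $\boldsymbol{\mathcal{S}}(x,x)=\omega(x)$ need not vanish --- the very obstruction the paper later notes for the integer-frequency basis $\sin(n\pi t/x)$), and that $\boldsymbol{\mathcal{S}}(x,0)=0$ supplies the extra $1/\mu_{n}$ without which absolute convergence would fail; in the paper these same facts are hidden in the choice of extension, whose continuity at $t=0$ and $t=\pm2x$ is precisely the condition $\boldsymbol{\mathcal{S}}(x,0)=0$. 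One shared caveat: like the paper, you invoke $\boldsymbol{\mathcal{S}}(x,0)=0$, which the text records in (\ref{eq:TransOpSxx}) under the hypothesis $q\in W_{2}^{1}\left[0,L\right]$; this boundary value in fact holds for $q\in\mathcal{L}_{2}\left[0,L\right]$, so its use is legitimate here, but since the proposition is stated for $q\in\mathcal{L}_{2}$ it merits an explicit word in either proof.
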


\begin{proof}
For any $x\in[0,L]$ fixed, formula (\ref{eq:Series Kernel S}) is
nothing but a Fourier expansion in terms of the orthonormal basis
$\left\{ \sqrt{2/x}\sin\left(\frac{2n+1}{2x}\pi t\right)\right\} _{n=0}^{\infty}$
of $\mathcal{L}_{2}[0,x]$, see \cite[p. 80, Ex. 6]{Folland}. In
fact, this expansion can be seen as a classical Fourier sine series.
For this, extend $\mathcal{\boldsymbol{S}}(x,\cdot)$ onto $[0,2x]$
by making it symmetric with respect to the line $t=x$, that is, define
the extension $\tilde{\mathcal{\boldsymbol{S}}}$ as $\tilde{\mathcal{\boldsymbol{S}}}(x,t)=\tilde{\mathcal{\boldsymbol{S}}}(x,2x-t)=\boldsymbol{S}(x,t)$
for $t\in[0,x]$. Then consider the odd extension of $\boldsymbol{\tilde{S}}(x,t)\in AC[0,2x]$
onto $[-2x,2x]$. Thus, we obtain a function $\boldsymbol{\tilde{S}}(x,t)\in AC[-2x,2x]$.
Since $\boldsymbol{\tilde{S}}_{t}(x,t)$ is square integrable, its
Fourier series converges absolutely and uniformly, see \cite[Chap.1. §26]{Bary}.
\end{proof}
\begin{prop}
The transmutation kernel $\mathcal{\boldsymbol{G}}(x,t)$ admits the
series representation
\begin{equation}
\mathcal{\boldsymbol{G}}(x,t)=\sum_{n=0}^{\infty}\frac{\mathfrak{g}_{n}(x)}{x}\cos\left(\frac{n\pi t}{x}\right),\label{eq:Series Kernel G}
\end{equation}
where for every $x\in(0,L]$ the series converges absolutely and uniformly
with respect to $t\in[0,x]$.
\end{prop}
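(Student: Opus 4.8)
The plan is to mirror the strategy used for the sine expansion in Proposition \ref{thm:The-transmutations-kernelExpan}, but adapted to the even boundary behavior of $\boldsymbol{\mathcal{G}}(x,t)$. For each fixed $x\in(0,L]$, formula (\ref{eq:Series Kernel G}) is simply the Fourier expansion of $\boldsymbol{\mathcal{G}}(x,\cdot)$ in the orthogonal system $\left\{1,\sqrt{2}\cos\left(\frac{n\pi t}{x}\right)\right\}_{n=1}^{\infty}$, which is an orthogonal basis of $\mathcal{L}_2[0,x]$ (the cosine counterpart of the sine basis cited from \cite[p.~80]{Folland}). So the only real content is to upgrade mere $\mathcal{L}_2$-convergence to \emph{absolute and uniform} convergence on $[0,x]$.

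First I would recall from Theorem \ref{thm: FourierRepTransKernel} that $\boldsymbol{\mathcal{G}}(x,\cdot)\in W_2^1[0,x]$, so both $\boldsymbol{\mathcal{G}}(x,\cdot)$ and its derivative $\boldsymbol{\mathcal{G}}_2(x,\cdot)$ are in $\mathcal{L}_2[0,x]$, and in particular $\boldsymbol{\mathcal{G}}(x,\cdot)\in AC[0,x]$. To realize (\ref{eq:Series Kernel G}) as a classical Fourier cosine series, I would extend $\boldsymbol{\mathcal{G}}(x,\cdot)$ to an even function on $[-x,x]$ by setting $\tilde{\boldsymbol{\mathcal{G}}}(x,t)=\boldsymbol{\mathcal{G}}(x,|t|)$, and then extend $2x$-periodically to all of $\mathbb{R}$. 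The key observation is that the even reflection of an absolutely continuous function is again absolutely continuous across $t=0$ (no jump is introduced, since the two one-sided values agree), and likewise the periodic continuation is continuous across $t=\pm x$ because the cosine system imposes no constraint on the endpoint values. Thus $\tilde{\boldsymbol{\mathcal{G}}}(x,\cdot)\in AC(\mathbb{R})$ with period $2x$, and its distributional derivative is the odd, $2x$-periodic extension of $\boldsymbol{\mathcal{G}}_2(x,\cdot)$, which lies in $\mathcal{L}_2$ over one period. The Fourier cosine series of $\boldsymbol{\mathcal{G}}(x,\cdot)$ on $[0,x]$ coincides with the full Fourier series of this even periodic extension.

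With these structural facts in place, the conclusion follows from the same classical theorem invoked in the sine case: if a periodic function is absolutely continuous and its derivative is square integrable over a period, its Fourier series converges absolutely and uniformly, see \cite[Chap.~1,~\S26]{Bary}. Identifying the Fourier coefficients then gives (\ref{eq:Series Kernel G}) with $\mathfrak{g}_n(x)$ the corresponding cosine coefficients (the factor $1/x$ and the separate treatment of the $n=0$ term being absorbed into the definition of $\mathfrak{g}_n(x)$).

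The step I expect to require the most care is verifying the regularity of the even-periodic extension at the junction points $t=0$ and $t=\pm x$. For the even extension at $t=0$ there is nothing to check beyond continuity, which is automatic; the subtler point is $t=\pm x$, where the periodic continuation glues $\tilde{\boldsymbol{\mathcal{G}}}(x,x^-)$ to $\tilde{\boldsymbol{\mathcal{G}}}(x,-x^+)=\tilde{\boldsymbol{\mathcal{G}}}(x,x^-)$ by evenness, so continuity again holds with no condition on $\boldsymbol{\mathcal{G}}(x,x)$. This is precisely why the cosine basis is the natural one here and why no boundary relation among the values in (\ref{eq:TransOpGxx}) is needed for the argument. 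Everything else is a routine transcription of the proof of Proposition \ref{thm:The-transmutations-kernelExpan}.
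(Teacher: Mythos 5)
Your proposal is correct and follows essentially the same route as the paper's own proof: the even extension of $\boldsymbol{\mathcal{G}}(x,\cdot)$ onto $[-x,x]$ followed by the classical result of Bary (Chap.~1, \S 26) that an absolutely continuous periodic function with square-integrable derivative has an absolutely and uniformly convergent Fourier series. Your extra care about continuity of the $2x$-periodic gluing at $t=0$ and $t=\pm x$ (and that no boundary condition on $\boldsymbol{\mathcal{G}}(x,x)$ is needed, unlike the sine case) is exactly the detail the paper's terse proof leaves implicit, so nothing is missing or different in substance.
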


\begin{proof}
For any $x\in[0,L]$, $\mathcal{\boldsymbol{G}}(x,\cdot)\in AC[0,x]$
with $\mathcal{\boldsymbol{G}}_{t}(x,t)$ being square integrable.
Let us consider the even extension of $\mathcal{\boldsymbol{G}}(x,t)$
onto $[-x,x]$ and its cosine Fourier expansion in the orthonormal
basis $\left\{ \sqrt{2/x}\cos\left(\frac{n\pi t}{x}\right)\right\} _{n=0}^{\infty}$.
Since $\mathcal{\boldsymbol{G}}_{t}(x,t)$ is square integrable, its
Fourier series converges absolutely and uniformly, see \cite[Chap.1. §26]{Bary}.
\end{proof}
\begin{rem}
It is worth to mentioning the compatibility of the Fourier series
representation (\ref{eq:Series Kernel S}) with the condition (\ref{eq:TransOpSxx})
at the point $t=0$, i.e., direct substitution of $t=0$ into (\ref{eq:Series Kernel S})
gives us $\mathcal{\boldsymbol{S}}(x,0)=0$. Moreover, the condition
at $t=x$ from (\ref{eq:TransOpSxx}) implies
\[
\mathcal{\boldsymbol{S}}(x,x)=\sum_{n=0}^{\infty}(-1)^{n}\frac{b_{n}(x)}{x}=\omega(x).
\]

Analogously, 
\[
\mathcal{\boldsymbol{G}}(x,x)=\sum_{n=0}^{\infty}(-1)^{n}\frac{\mathfrak{g}_{n}(x)}{x}=h+\omega(x).
\]
\end{rem}

\subsection{\label{subsec:Cardinal-series-representation}Cardinal series representation
for solutions of (\ref{eq:PrincipalEq}) with $q\in\mathcal{L}_{2}[0,L]$}

Recall the Paley-Wiener class $PW_{x}^{2}=\left\{ f\in\mathcal{L}_{2}(\mathbb{R}):\exists g\in\mathcal{L}_{2}(-x,x),f(z)=\int_{-x}^{x}g(u)e^{iuz}du\right\} $
and its orthonormal basis $\left\{ \sqrt{\frac{x}{\pi}}\sinc\left(\frac{\rho x}{\pi}-n\right)\right\} _{n\in\mathbb{Z}}$.
The function $\sinc(z)$ stands for the normalized $\text{sinc}$
function defined as 
\[
\sinc(z):=\begin{cases}
\frac{\sin(\pi z)}{\pi z}, & z\neq0,\\
1, & z=0.
\end{cases}
\]
The Whittaker-Shannon-Kotel'nikov (W.S.K) sampling theorem asserts
that a function $f(\rho)$ belonging to the Paley-Wiener class $PW_{x}^{2}$
admits the series representation 
\begin{equation}
f(\rho)=\sum_{n\in\mathbb{Z}}f\left(\frac{n\pi}{x}\right)\sinc\left(\frac{\rho x}{\pi}-n\right).\label{eq:SincSamplingTheorem}
\end{equation}
 The series (\ref{eq:SincSamplingTheorem}) is known as the cardinal
series of $f(\rho)$ and converges uniformly on compact subsets of
$\mathbb{C}$. For $\rho$ on the real line, the convergence of this
series holds in the norm of $\mathcal{L}_{2}(\mathbb{R})$, absolutely
and uniformly, see \cite[p. 59]{Higgings}.
\begin{thm}
\label{thm:Let-.-The OldSIn}Let $q\in\mathcal{L}_{2}[0,L]$. The
solutions $S(\pp,x)$ and $\phi(\rho,x)$ of equation (\ref{eq:PrincipalEq})
admit the following series representations
\begin{equation}
S\left(\rho,x\right)=\frac{\sin(\pp x)}{\rho}+\frac{1}{2\rho\sin(\pp x)}\sum_{n=0}^{\infty}s_{n}(x)\left(\sinc\left(\frac{2\rho x}{\pi}-2n-1\right)+\sinc\left(\frac{2\rho x}{\pi}+2n+1\right)\right),\label{eq: SSeriesSinc}
\end{equation}
where
\begin{equation}
s_{n}(x)=\frac{(-1)^{n}(2n+1)\pi}{x}S\left(\frac{2n+1}{2x}\pi,x\right)-2,\,n=0,1,\ldots,\label{eq:Coeffs s}
\end{equation}
and
\begin{align}
\phi\left(\rho,x\right) & =-\rho xj_{1}(\rho x)+\sum_{n=0}^{\infty}\phi_{n}(x)\left(\sinc\left(\frac{\rho x}{\pi}-n\right)+\sinc\left(\frac{\rho x}{\pi}+n\right)\right),\label{eq: PhiSeriesSinc-1}
\end{align}
where
\begin{equation}
\phi_{0}(x)=\frac{\phi(0,x)}{2},\,\,\phi_{n}(x)=\phi\left(\frac{n\pi}{x},x\right)-(-1)^{n},\,n=1,2,\ldots.\label{eq:FormulasPsin-1}
\end{equation}
The series in (\ref{eq: SSeriesSinc}) and (\ref{eq: PhiSeriesSinc-1})
converge uniformly on any compact subset of the complex plane of the
variable $\rho$. For $\rho$ on the real line, the convergence of
(\ref{eq: SSeriesSinc}) and (\ref{eq: PhiSeriesSinc-1}) holds in
the norm of $\mathcal{L}_{2}(\mathbb{R})$, absolutely and uniformly.
\end{thm}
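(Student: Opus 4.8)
The plan is to derive both representations directly from the Whittaker--Shannon--Kotel'nikov formula (\ref{eq:SincSamplingTheorem}), applied to appropriately normalized functions lying in a Paley--Wiener class, and then to exploit the fact that $S(\rho,x)$ and $\phi(\rho,x)$ depend on $\rho$ only through $\rho^{2}$ and are therefore even functions of $\rho$. This evenness is what will let me fold the bilateral cardinal series $\sum_{n\in\mathbb{Z}}$ into the one-sided symmetric sums appearing in (\ref{eq: SSeriesSinc}) and (\ref{eq: PhiSeriesSinc-1}). Throughout I would use the elementary identity $\rho x\,j_{1}(\rho x)=\sinc\!\left(\frac{\rho x}{\pi}\right)-\cos(\rho x)$, which rewrites the non-$\mathcal{L}_{2}$ part of the representations in terms of sampling functions.

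For $\phi(\rho,x)$ I would start from (\ref{eq: TransOpPhi}): writing $\cos(\rho t)=\tfrac{1}{2}(e^{i\rho t}+e^{-i\rho t})$ and extending $\boldsymbol{\mathcal{G}}(x,\cdot)$ evenly to $(-x,x)$ shows $\phi(\rho,x)-\cos(\rho x)\in PW_{x}^{2}$, so (\ref{eq:SincSamplingTheorem}) applies on the nodes $\rho=n\pi/x$. Since $\cos(n\pi)=(-1)^{n}$, the samples are $\phi(n\pi/x,x)-(-1)^{n}$, and adding $\cos(\rho x)$ back gives $\phi(\rho,x)=\cos(\rho x)+\sum_{n\in\mathbb{Z}}[\phi(n\pi/x,x)-(-1)^{n}]\sinc(\frac{\rho x}{\pi}-n)$. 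Substituting $\cos(\rho x)=-\rho x\,j_{1}(\rho x)+\sinc(\frac{\rho x}{\pi})$ and absorbing the extra $\sinc(\frac{\rho x}{\pi})$ into the $n=0$ sample upgrades that coefficient from $\phi(0,x)-1$ to $\phi(0,x)$; the evenness of $\phi$ then pairs the $n$ and $-n$ terms (the $n=0$ function being counted twice), yielding (\ref{eq: PhiSeriesSinc-1}) with $\phi_{0}(x)=\phi(0,x)/2$ and $\phi_{n}(x)$ as in (\ref{eq:FormulasPsin-1}).

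The representation for $S(\rho,x)$ is the more delicate one, and the trick I would use is to sample a product rather than $S$ itself. From (\ref{eq:TransOpS}), $\rho S(\rho,x)-\sin(\rho x)=\int_{0}^{x}\boldsymbol{\mathcal{S}}(x,t)\sin(\rho t)\,dt\in PW_{x}^{2}$; multiplying by $2\sin(\rho x)$ produces $F(\rho):=2\sin(\rho x)\,(\rho S(\rho,x)-\sin(\rho x))$, which is entire of exponential type $2x$ and, since $\sin(\rho x)$ is bounded on $\mathbb{R}$, lies in $PW_{2x}^{2}$. Applying (\ref{eq:SincSamplingTheorem}) on the finer grid $\rho=m\pi/(2x)$, the key observation is that $\sin(m\pi/2)=0$ for even $m$, so all even-indexed samples vanish, while for $m=2n+1$ one has $\sin((2n+1)\pi/2)=(-1)^{n}$ and the sample equals exactly $s_{n}(x)$ from (\ref{eq:Coeffs s}). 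Since $F$ is even in $\rho$ (as $S$ is even and $\sin(\rho x)$ odd), the samples at $\pm(2n+1)$ coincide, giving $F(\rho)=\sum_{n=0}^{\infty}s_{n}(x)(\sinc(\frac{2\rho x}{\pi}-2n-1)+\sinc(\frac{2\rho x}{\pi}+2n+1))$; dividing by $2\rho\sin(\rho x)$ then produces (\ref{eq: SSeriesSinc}).

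I expect the main obstacle to be the $S$-case, on two counts. First, one must confirm that $F$ genuinely belongs to $PW_{2x}^{2}$: the $\mathcal{L}_{2}(\mathbb{R})$-bound follows from the boundedness of $\sin(\rho x)$ on the real axis, but the exponential type must be tracked carefully through the product. Second, the formula (\ref{eq: SSeriesSinc}) carries apparent singularities from the factor $1/(\rho\sin(\rho x))$ at $\rho=k\pi/x$; I would argue these are removable precisely because the cardinal series equals the entire function $F$, which vanishes at each such node, so the quotient $F(\rho)/(2\sin(\rho x))=\rho S(\rho,x)-\sin(\rho x)$ remains entire. The convergence assertions---uniform on compact subsets of $\mathbb{C}$, and in $\mathcal{L}_{2}(\mathbb{R})$ absolutely and uniformly on the real line---require no additional work, since they are inherited verbatim from the stated properties of the W.S.K. series (\ref{eq:SincSamplingTheorem}).
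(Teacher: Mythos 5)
Your proposal is correct, and while the $\phi$ half coincides with the paper's argument (same function $\phi(\rho,x)-\cos(\rho x)\in PW_{x}^{2}$, same nodes $n\pi/x$, same absorption of the extra $\sinc$ from the identity (\ref{eq:Besselj}) into the $n=0$ coefficient), your derivation of (\ref{eq: SSeriesSinc}) takes a genuinely different route. The paper never samples anything to get the $S$-representation: it expands the transmutation kernel $\boldsymbol{\mathcal{S}}(x,\cdot)$ in the basis $\left\{ \sin\left(\frac{2n+1}{2x}\pi t\right)\right\} $ (Proposition \ref{thm:The-transmutations-kernelExpan}, whose absolute and uniform convergence comes from the symmetric-about-$t=x$, then odd, extension of the kernel to $[-2x,2x]$ and Bary's theorem), substitutes into (\ref{eq:TransOpS}), and evaluates the resulting integrals in closed form via (\ref{eq:FormulaOFIntegral}); the coefficient formula (\ref{eq:Coeffs s}) is then read off by evaluating the series at $\rho=\frac{2m+1}{2x}\pi$, exactly as you do. Your alternative---multiplying $\rho S(\rho,x)-\sin(\rho x)\in PW_{x}^{2}$ by $2\sin(\rho x)$ to land in $PW_{2x}^{2}$ and sampling on the half-grid $m\pi/(2x)$, where the even-indexed samples vanish identically and the odd ones equal $s_{n}(x)$---is the Fourier-dual of the paper's construction: the paper's peculiar kernel extension is precisely what makes those even samples vanish on the transform side. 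Your route is leaner in its hypotheses (it needs only $\boldsymbol{\mathcal{S}}(x,\cdot)\in\mathcal{L}_{2}[0,x]$, and it sidesteps justifying the interchange of summation and integration, which the paper handles through the uniform convergence of the kernel series), and you correctly flag the two genuine obstacles, the membership $F\in PW_{2x}^{2}$ (bounded function times $\mathcal{L}_{2}$ on $\mathbb{R}$, type at most $2x$, then Paley--Wiener) and the removability of the apparent poles at $\rho=k\pi/x$ (where $F$ vanishes by construction, so $F(\rho)/(2\sin(\rho x))$ is the entire function $\rho S(\rho,x)-\sin(\rho x)$). What the paper's approach buys instead is the kernel expansion (\ref{eq:Series Kernel S}) itself as a standalone object, which it reuses for the derivative representations of Section \ref{subsec:Cardinal-series-representation-Derivatives} and for the remainder estimates of Section \ref{subsec:Error-estimates}, where $\varepsilon_{N}(x)$ is controlled by $\left\Vert \boldsymbol{S}(x,\cdot)-\boldsymbol{S}_{N}(x,\cdot)\right\Vert _{\mathcal{L}_{2}[0,x]}$; your sampling argument does not produce that by-product.
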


\begin{proof}
According to (\ref{eq: TransOpPhi}), for any fixed value of $x$,
the function 
\begin{equation}
\hat{\phi}(\rho,x):=\phi(\rho,x)-\cos(\pp x)\label{eq:AuxPhiFun}
\end{equation}
 belongs to the Paley-Wiener class $PW_{x}^{2}$. By the W.S.K theorem,
it is represented by the cardinal series
\begin{equation}
\hat{\phi}(\rho,x)=\sinc\left(\frac{\rho x}{\pi}\right)\hat{\phi}(0,x)+\sum_{n=1}^{\infty}\phi_{n}(x)\left(\sinc\left(\frac{\rho x}{\pi}-n\right)+\sinc\left(\frac{\rho x}{\pi}+n\right)\right)\cdot\label{eq:Phi tilde}
\end{equation}
 The coefficients $\phi_{n}(x)$, $n=1,2,\dots$ are obtained as the
values of the function $\hat{\phi}(\rho,x)$ at the sample points
$\rho=\frac{n\pi}{x},$ i.e., $\phi_{n}(x)=\phi\left(\frac{n\pi}{x},x\right)-(-1)^{n}.$ 

Substitution of (\ref{eq:AuxPhiFun}) into (\ref{eq:Phi tilde}) yields
into the representation
\[
\phi(\rho,x)=\cos(\rho x)-\sinc\left(\frac{\rho x}{\pi}\right)+\sum_{n=0}^{\infty}\phi_{n}(x)\left(\sinc\left(\frac{\rho x}{\pi}-n\right)+\sinc\left(\frac{\rho x}{\pi}+n\right)\right).
\]

Finally, we obtain (\ref{eq: PhiSeriesSinc-1}) by using the identity
\begin{equation}
zj_{1}(z)=\sinc\left(\frac{z}{\pi}\right)-\cos z,\,z\in\mathbb{C}.\label{eq:Besselj}
\end{equation}

For the solution $S(\rho,x)$, substitution of (\ref{eq:Series Kernel S})
into (\ref{eq:TransOpS}) leads to the equality
\[
S(\rho,x)=\frac{\sin(\rho x)}{\rho}+\frac{1}{\rho}\sum_{n=0}^{\infty}\frac{b_{n}(x)}{x}\int_{0}^{x}\sin\left(\frac{2n+1}{2x}t\pi\right)\sin(\rho t)dt.
\]

We have
\begin{equation}
\int_{0}^{x}\sin\left(\frac{2n+1}{2x}t\pi\right)\sin(\rho t)dt=(-1)^{n}\frac{\sinc\left(\frac{2\rho x}{\pi}-2n-1\right)+\sinc\left(\frac{2\rho x}{\pi}+2n+1\right)}{2\sin(\pp x)}x.\label{eq:FormulaOFIntegral}
\end{equation}

Thus,
\begin{equation}
S\left(\rho,x\right)=\frac{\sin(\pp x)}{\rho}+\frac{1}{2\rho\sin(\pp x)}\sum_{n=0}^{\infty}(-1)^{n}b_{n}(x)\left(\sinc\left(\frac{2\rho x}{\pi}-2n-1\right)+\sinc\left(\frac{2\rho x}{\pi}+2n+1\right)\right).\label{eq:CardinalS1}
\end{equation}

Considering $\rho=\frac{2m+1}{2x}\pi$, $m\in\mathbb{N}\cup\left\{ 0\right\} $
in (\ref{eq:CardinalS1}), we obtain 
\[
b_{m}(x)=\frac{(2m+1)\pi}{x}S\left(\frac{2m+1}{2x}\pi,x\right)-2(-1)^{m}.
\]

Thus, denoting $s_{n}(x)=(-1)^{n}b_{n}(x)$ the desired cardinal series
representation for the solution $S(\rho,x)$ is obtained. 

From the convergence principle (see \cite[p. 57]{Higgings}) and W.S.K
theorem, it follows that for $\rho$ on the real line and on any compact
subset of the complex plane, the convergence of (\ref{eq: SSeriesSinc})
and (\ref{eq: PhiSeriesSinc-1}) holds uniformly. Additionally, the
absolute convergence follows from Hölder's inequality, see Theorem
6.16 in \cite[p. 53]{Higgings}.
\end{proof}
\begin{rem}
The cardinal series representation (\ref{eq: PhiSeriesSinc-1}) can
be obtained also by using the Fourier series expansion (\ref{eq:Series Kernel G})
of the transmutation kernel $\mathcal{\boldsymbol{G}}(x,t)$, and
following a procedure analogous to the one presented in the case of
the cardinal series for $S(\rho,x)$, i.e., by substituting (\ref{eq:Series Kernel G})
into (\ref{eq: TransOpPhi}).
\end{rem}

\begin{rem}
In \cite{BoumenirChananeEigenvalues1996}, another cardinal series
representation for the function $S(\rho,L)$ is obtained by applying
the W.S.K theorem directly to the function $S(\rho,L)-\frac{\sin(\rho L)}{\rho}$.
It can also be derived by expanding the transmutation kernel into
the Fourier series
\[
\boldsymbol{S}(x,t)=\sum_{n=1}^{\infty}\frac{d_{n}(x)}{x}\sin\left(\frac{n\pi t}{x}\right)
\]
with respect to the orthogonal basis$\left\{ \sin\left(\frac{n\pi t}{x}\right)\right\} _{n=1}^{\infty}$
of $\mathcal{L}_{2}\left[0,x\right]$. However, the pointwise convergence
may fail in this case. Indeed, at the point $t=x$ the series is $0$,
which is not generally true for $\boldsymbol{S}(x,x)$, see (\ref{eq:TransOpSxx}).
\end{rem}

\begin{thm}
Let $q\in\mathcal{L}_{2}[0,L]$. The solutions $T(\pp,x)$ and $\psi(\rho,x)$
of equation (\ref{eq:PrincipalEq}) admit the following series representations
\begin{align}
T\left(\rho,x\right) & =\frac{\sin(\pp(x-L))}{\rho}\label{eq: TSeriesSinc}\\
 & +\frac{1}{2\rho\sin(\pp(x-L))}\sum_{n=0}^{\infty}t_{n}(x)\left(\sinc\left(\frac{2\rho(x-L)}{\pi}-2n-1\right)+\sinc\left(\frac{2\rho(x-L)}{\pi}+2n+1\right)\right),\nonumber 
\end{align}
where
\[
t_{n}(x)=\frac{(-1)^{n}(2n+1)\pi}{x-L}T\left(\frac{2n+1}{2(x-L)}\pi,x\right)-2,\,n=0,1,\ldots,
\]
and
\begin{align}
\psi\left(\rho,x\right) & =\rho(x-L)j_{1}(\rho(L-x))\label{eq: PsiSeriesSinc}\\
 & +\sum_{n=0}^{\infty}\psi_{n}(x)\left(\sinc\left(\frac{\rho(x-L)}{\pi}-n\right)+\sinc\left(\frac{\rho(x-L)}{\pi}+n\right)\right)\nonumber 
\end{align}
where
\begin{align}
\psi_{0}(x) & =\frac{\psi\left(0,x\right)}{2},\,\psi_{n}(x)=\psi\left(\frac{n\pi}{x-L},x\right)-(-1)^{n},\,n=1,2,\ldots.\label{eq:CoeffsPsi}
\end{align}
The series in (\ref{eq: TSeriesSinc}) and (\ref{eq: PsiSeriesSinc})
converge uniformly on any compact subset of the complex plane of the
variable $\rho$. For $\rho$ on the real line, the convergence holds
in the norm of $\mathcal{L}_{2}(\mathbb{R})$, absolutely and uniformly.
\end{thm}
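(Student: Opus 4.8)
The plan is to deduce this theorem from Theorem \ref{thm:Let-.-The OldSIn} by reflecting the interval about its right endpoint, since $T$ and $\psi$ are exactly the right-endpoint counterparts of $S$ and $\phi$. First I would put $\tilde{q}(\xi):=q(L-\xi)$ for $\xi\in[0,L]$ and note that $y(\rho,\cdot)\mapsto y(\rho,L-\cdot)$ carries solutions of (\ref{eq:PrincipalEq}) to solutions of $-\tilde{y}''+\tilde{q}\tilde{y}=\rho^{2}\tilde{y}$. Writing $\tilde{S}(\rho,\xi)$ and $\tilde{\phi}(\rho,\xi)$ for the solutions attached to $\tilde{q}$ as in Section 2, with the constant $h$ replaced by $H$ in the case of $\tilde{\phi}$, a comparison of the Cauchy data at $\xi=0$ gives the identities $T(\rho,x)=-\tilde{S}(\rho,L-x)$ and $\psi(\rho,x)=\tilde{\phi}(\rho,L-x)$; the minus sign in the first one comes from $T'(\rho,L)=1$ against $\tilde{S}'(\rho,0)=1$.

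Next I would apply Theorem \ref{thm:Let-.-The OldSIn} to $\tilde{S}$ and $\tilde{\phi}$ on the interval of length $\xi=L-x$ and substitute back $\xi=L-x$. In the leading terms the signs fall into place: $-\sin(\rho(L-x))/\rho=\sin(\rho(x-L))/\rho$, and $-\rho(L-x)j_{1}(\rho(L-x))=\rho(x-L)j_{1}(\rho(L-x))$, the latter being just the image of the term $-\rho\xi j_{1}(\rho\xi)$ of the $\phi$-representation, which in Theorem \ref{thm:Let-.-The OldSIn} rests on the identity (\ref{eq:Besselj}). For $T$ the overall sign of $T=-\tilde{S}$, together with $\sin(\rho(L-x))=-\sin(\rho(x-L))$, converts the prefactor $1/\bigl(2\rho\sin(\rho(L-x))\bigr)$ into $1/\bigl(2\rho\sin(\rho(x-L))\bigr)$. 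This reproduces the free parts of (\ref{eq: TSeriesSinc}) and (\ref{eq: PsiSeriesSinc}).

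It then remains to match the sample points and coefficients, and here I would lean on two elementary facts: $\sinc$ is even, and $S,\phi,T,\psi$ are even in $\rho$ (since $\rho$ enters (\ref{eq:PrincipalEq}) only through $\rho^{2}$ while the Cauchy data do not depend on $\rho$). Evenness of $\sinc$ allows the replacements $\tfrac{2\rho(L-x)}{\pi}\mp(2n+1)\to\tfrac{2\rho(x-L)}{\pi}\pm(2n+1)$ and $\tfrac{\rho(L-x)}{\pi}\mp n\to\tfrac{\rho(x-L)}{\pi}\pm n$, leaving the symmetric $\sinc$-pairs of (\ref{eq: TSeriesSinc}) and (\ref{eq: PsiSeriesSinc}) intact. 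Evenness of the solutions lets me rewrite the sample points $\tfrac{(2n+1)\pi}{2(L-x)}$ and $\tfrac{n\pi}{L-x}$ as $\tfrac{(2n+1)\pi}{2(x-L)}$ and $\tfrac{n\pi}{x-L}$. For $\psi=\tilde{\phi}$ this yields at once $\psi_{0}(x)=\psi(0,x)/2$ and $\psi_{n}(x)=\psi(\tfrac{n\pi}{x-L},x)-(-1)^{n}$, while for $T=-\tilde{S}$ the extra minus sign is absorbed into the denominator via $-1/(L-x)=1/(x-L)$, producing the stated $t_{n}(x)$.

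Finally, the convergence assertions---uniform on compact subsets of $\mathbb{C}$, and on $\mathbb{R}$ in the norm of $\mathcal{L}_{2}(\mathbb{R})$, absolutely and uniformly---transfer verbatim from Theorem \ref{thm:Let-.-The OldSIn} under the affine substitution $\xi=L-x$. I do not expect any genuine analytic obstacle, the statement being the mirror image of Theorem \ref{thm:Let-.-The OldSIn}; the only delicate point is the sign bookkeeping above, where the minus sign of $T=-\tilde{S}$ and the reflection $x\mapsto L-x$ must be reconciled with the evenness of $\sinc$ and of the solutions in $\rho$ so that the half-integer and integer sample points and their coefficients come out in precisely the stated form. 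As an alternative one could instead establish transmutation integral representations for $T$ and $\psi$ based at $x=L$ and then repeat, mutatis mutandis, the two mechanisms of Theorem \ref{thm:Let-.-The OldSIn} (the Fourier-sine expansion of the kernel for $T$ and the direct sampling for $\psi$); the reflection route merely spares us from redeveloping that machinery.
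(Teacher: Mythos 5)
Your proposal is correct and is essentially the paper's own proof: the paper disposes of this theorem in one line, "a change of variable in the series representations (\ref{eq: SSeriesSinc}) and (\ref{eq: PhiSeriesSinc-1})," which is precisely your reflection $x\mapsto L-x$ with $T=-\tilde{S}$, $\psi=\tilde{\phi}$ (with $h$ replaced by $H$). Your sign bookkeeping via the evenness of $\sinc$ and of the solutions in $\rho$ checks out and merely makes explicit what the paper leaves to the reader.
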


\begin{proof}
A change of variable in the series representations (\ref{eq: SSeriesSinc})
and (\ref{eq: PhiSeriesSinc-1}) gives us the representations (\ref{eq: TSeriesSinc})
and (\ref{eq: PsiSeriesSinc}), respectively.
\end{proof}
\begin{rem}
Notice that from (\ref{eq:FormulasPsin-1}) and (\ref{eq:CoeffsPsi}),
we have
\begin{equation}
q(x)=\frac{\psi_{0}^{''}(x)}{\psi_{0}(x)}=\frac{\phi_{0}^{''}(x)}{\phi_{0}(x)}.\label{eq:Potential First Coeffs}
\end{equation}
\end{rem}

The following relations between the coefficients of the NSBF representations
and cardinal series representations hold.
\begin{rem}
Substitution of (\ref{eq:NsbfPhi}) into (\ref{eq:FormulasPsin-1})
results in
\begin{align*}
\phi_{0}(x) & =\frac{g_{0}(x)+1}{2},\\
\phi_{n}(x) & =\sum_{m=0}^{\infty}(-1)^{m}g_{m}(x)j_{2m}(n\pi),\,n=1,2,\ldots.
\end{align*}
\end{rem}

Analogously, substituting the NSBF series representation (\ref{eq:NsbfS})
into (\ref{eq:Coeffs s}) we obtain
\[
s_{m}(x)=\frac{2(-1)^{m}x}{(2m+1)\pi}\sum_{n=0}^{\infty}(-1)^{n}\boldsymbol{s}_{n}(x)j_{2n+1}\left(\frac{2m+1}{2}\pi\right),\,m=0,1,\ldots.
\]

\subsection{Representation for derivatives of solutions\label{subsec:Cardinal-series-representation-Derivatives} }
\begin{thm}
\label{thm:sincDerivatives}Let $q\in\mathcal{L}_{2}[0,L]$. The functions
$S'\left(\rho,x\right)$ and $\phi'\left(\rho,x\right)$ admit the
following series representations
\begin{align}
S'\left(\rho,x\right) & =\cos(\rho x)+\frac{\omega(x)}{\rho}\sin(\rho x)\label{eq: SSeriesSinc-1}\\
 & +\frac{1}{2\rho\sin(\pp x)}\sum_{n=0}^{\infty}\mathring{s}_{n}(x)\left(\sinc\left(\frac{2\rho x}{\pi}-2n-1\right)+\sinc\left(\frac{2\rho x}{\pi}+2n+1\right)\right),\nonumber 
\end{align}
where
\begin{equation}
\mathring{s}_{n}(x)=\frac{(-1)^{n}(2n+1)\pi}{x}\left.s'\left(\rho,x\right)\right|_{\rho=\frac{2n+1}{2x}\pi}-2\omega(x),\,n=0,1,\ldots,\label{eq:CoefsDErS}
\end{equation}
and
\begin{align}
\phi'\left(\rho,x\right) & =-\rho\sin(\rho x)-(h+\omega(x))\rho xj_{1}(\rho x)\label{eq: PhiSeriesSinc-1-1}\\
 & +\sum_{n=0}^{\infty}\mathring{\phi}_{n}(x)\left(\sinc\left(\frac{\rho x}{\pi}-n\right)+\sinc\left(\frac{\rho x}{\pi}+n\right)\right)\nonumber 
\end{align}
where
\[
\mathring{\phi}_{0}(x)=\frac{\phi'\left(0,x\right)}{2},\,\,\mathring{\phi}_{n}(x)=\left.\phi'\left(\rho,x\right)\right|_{\rho=\frac{n\pi}{x}}-(-1)^{n}(h+\omega(x)),\,\,n=1,2,\ldots.
\]
The series in (\ref{eq: SSeriesSinc-1}) and (\ref{eq: PhiSeriesSinc-1-1})
converge uniformly on any compact subset of the complex plane of the
variable $\rho$. For $\rho$ on the real line, the convergence holds
in the norm of $\mathcal{L}_{2}(\mathbb{R})$, absolutely and uniformly.
\end{thm}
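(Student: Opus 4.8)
The plan is to repeat, \emph{mutatis mutandis}, the argument used in the proof of Theorem~\ref{thm:Let-.-The OldSIn}, but starting from the integral representations (\ref{eq:TransOpS-1}) and (\ref{eq: TransOpPhi-1}) for the $x$-derivatives instead of (\ref{eq:TransOpS}) and (\ref{eq: TransOpPhi}), so that the kernels are replaced by $\boldsymbol{\mathcal{S}}_{1}$ and $\boldsymbol{\mathcal{G}}_{1}$, both of which lie in $\mathcal{L}_{2}[0,x]$ by Theorem~\ref{thm: FourierRepTransKernel}. The two solutions are treated by the same two devices as before: a direct sampling for the cosine case and a kernel Fourier expansion for the sine case.

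For $\phi'(\rho,x)$ I would set
\[
\widehat{\phi'}(\rho,x):=\phi'(\rho,x)+\rho\sin(\rho x)-(h+\omega(x))\cos(\rho x)=\int_{0}^{x}\boldsymbol{\mathcal{G}}_{1}(x,t)\cos(\rho t)\,dt.
\]
Writing $\cos(\rho t)=\tfrac12(e^{i\rho t}+e^{-i\rho t})$ and extending $\boldsymbol{\mathcal{G}}_{1}(x,\cdot)$ evenly to $[-x,x]$ shows that, as a function of $\rho$, $\widehat{\phi'}(\cdot,x)$ is an even element of $PW_{x}^{2}$. Applying the W.S.K.\ theorem (\ref{eq:SincSamplingTheorem}) and folding the symmetric pairs $\pm n$ (legitimate since $\widehat{\phi'}$ is even) gives a cardinal series whose samples are $\widehat{\phi'}(n\pi/x,x)=\phi'(n\pi/x,x)-(-1)^{n}(h+\omega(x))$ for $n\ge1$, which are exactly the $\mathring{\phi}_{n}(x)$. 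Substituting back the definition of $\widehat{\phi'}$ and then eliminating the term $(h+\omega(x))\cos(\rho x)$ via the identity (\ref{eq:Besselj}), $\cos(\rho x)=\sinc(\rho x/\pi)-\rho x\,j_{1}(\rho x)$, produces the leading factor $-(h+\omega(x))\rho x\,j_{1}(\rho x)$ and leaves an extra $(h+\omega(x))\sinc(\rho x/\pi)$ to be absorbed into the $n=0$ term. A short computation shows the resulting $n=0$ coefficient equals $\tfrac12\big(\widehat{\phi'}(0,x)+(h+\omega(x))\big)=\tfrac12\phi'(0,x)=\mathring{\phi}_{0}(x)$, yielding (\ref{eq: PhiSeriesSinc-1-1}).

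For $S'(\rho,x)$ the half-integer shape of the claimed series forces me to proceed as in the $S$-part of Theorem~\ref{thm:Let-.-The OldSIn} rather than by direct sampling (the prefactor $1/\sin(\rho x)$ is not that of a standard cardinal series). I would expand $\boldsymbol{\mathcal{S}}_{1}(x,\cdot)$ in the orthonormal basis $\{\sqrt{2/x}\,\sin(\tfrac{2n+1}{2x}\pi t)\}_{n\ge0}$ of $\mathcal{L}_{2}[0,x]$, insert the expansion into (\ref{eq:TransOpS-1}), interchange summation and integration (valid since the Fourier series converges in $\mathcal{L}_{2}[0,x]$ and $\sin(\rho\,\cdot)\in\mathcal{L}_{2}[0,x]$), and evaluate each integral by (\ref{eq:FormulaOFIntegral}). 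This produces a series of the required form with coefficients $(-1)^{n}\mathring b_{n}(x)$. To pin these down I would set $\rho=\tfrac{2m+1}{2x}\pi$: there $\cos(\rho x)=0$, $\sin(\rho x)=(-1)^{m}$, $2\rho x/\pi=2m+1$, and the kernels collapse by $\sinc(2(m-n))=\delta_{mn}$ and $\sinc(2(m+n+1))=0$; solving the resulting scalar relation gives precisely (\ref{eq:CoefsDErS}).

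Finally, the convergence statements follow exactly as in Theorem~\ref{thm:Let-.-The OldSIn}: for $\phi'$ from the membership $\widehat{\phi'}(\cdot,x)\in PW_{x}^{2}$, and for $S'$ from the observation that
\[
2\sin(\rho x)\int_{0}^{x}\boldsymbol{\mathcal{S}}_{1}(x,t)\sin(\rho t)\,dt=\int_{0}^{x}\boldsymbol{\mathcal{S}}_{1}(x,t)\big(\cos(\rho(x-t))-\cos(\rho(x+t))\big)\,dt\in PW_{2x}^{2},
\]
since the arguments $x-t$ and $x+t$ range in $[0,2x]$; hence both series are genuine cardinal series, and uniform convergence on compacta together with $\mathcal{L}_{2}(\mathbb{R})$, absolute and uniform convergence on the line follow from the convergence principle \cite[p.~57]{Higgings} and H\"older's inequality \cite[Thm 6.16, p.~53]{Higgings}. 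I expect the only genuine bookkeeping difficulty to be the correct handling of the $n=0$ term for $\phi'$ in conjunction with the $j_{1}$-identity, and the verification that the $S'$ series is the cardinal series of an element of $PW_{2x}^{2}$ rather than $PW_{x}^{2}$.
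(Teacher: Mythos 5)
Your proposal is correct and follows essentially the same route as the paper's proof: the W.S.K.\ sampling of $F(\rho,x)=\phi'(\rho,x)+\rho\sin(\rho x)-(h+\omega(x))\cos(\rho x)\in PW_{x}^{2}$ combined with the identity (\ref{eq:Besselj}) for $\phi'$, and the Fourier expansion of $\boldsymbol{\mathcal{S}}_{1}(x,\cdot)$ in the basis $\left\{ \sin\left(\frac{2n+1}{2x}\pi t\right)\right\} _{n=0}^{\infty}$ inserted into (\ref{eq:TransOpS-1}) via (\ref{eq:FormulaOFIntegral}) for $S'$, with the coefficients pinned down at the sample points $\rho=\frac{2m+1}{2x}\pi$. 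Your extra bookkeeping for the $n=0$ term and the explicit observation that $2\sin(\rho x)\int_{0}^{x}\boldsymbol{\mathcal{S}}_{1}(x,t)\sin(\rho t)\,dt\in PW_{2x}^{2}$ only make explicit what the paper leaves implicit when it invokes the convergence principle.
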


\begin{proof}
The proof is analogous to the proof of Theorem \ref{thm:Let-.-The OldSIn}.
First, consider the kernel $\boldsymbol{\boldsymbol{\mathcal{S}_{1}}}(x,t)$
from (\ref{eq:TransOpS-1}), and its Fourier expansion in terms of
the orthogonal basis $\left\{ \sin\left(\frac{2n+1}{2x}t\pi\right)\right\} _{n=0}^{\infty}$
in $\mathcal{L}_{2}[0,x]$ as follows
\begin{equation}
\boldsymbol{\boldsymbol{\boldsymbol{\mathcal{S}_{1}}}}(x,t)=\sum_{n=0}^{\infty}\frac{\mathring{b}_{n}(x)}{x}\sin\left(\frac{2n+1}{2x}t\pi\right),\label{eq:KernelSRep-1}
\end{equation}

where for every $x\in(0,L]$ the series converges in $\mathcal{L}_{2}[0,x]$.
Substitution of (\ref{eq:KernelSRep-1}) into (\ref{eq:TransOpS-1})
and application of formula (\ref{eq:FormulaOFIntegral}) leads to
the cardinal series representation (\ref{eq: SSeriesSinc-1}) where
$\mathring{s}_{n}(x)=(-1)^{n}\mathring{b}_{n}(x)$. Here we change
the order of summation and integration due to Parseval's identity
\cite[p. 16]{Glazman}. The formula (\ref{eq:CoefsDErS}) for the
coefficients can be obtained applying the same procedure as for $S(\rho,x)$
in Theorem \ref{thm:Let-.-The OldSIn}.

Now, consider the function $F(\rho,x):=\phi'\left(\rho,x\right)+\rho\sin(\rho x)-\cos(\rho x)(h+\omega(x))$
which belongs to $PW_{x}^{2}$ with respect to the variable $\rho$,
because of (\ref{eq: TransOpPhi-1}). By the W.S.K theorem it admits
the cardinal series representation
\begin{equation}
F(\rho,x)=\sinc\left(\frac{\rho x}{\pi}\right)F(0,x)+\sum_{n=1}^{\infty}F_{n}(x)\left(\sinc\left(\frac{\rho x}{\pi}-n\right)+\sinc\left(\frac{\rho x}{\pi}+n\right)\right)\label{eq:Aux1}
\end{equation}

with the coefficients $F_{n}(x)$ obtained as the values of the function
$F(\rho,x)$ at the sample points $\rho=\frac{n\pi}{x},$ i.e.,
\[
F_{n}(x)=F\left(\frac{n\pi}{x},x\right)=\left.\phi'\left(\rho,x\right)\right|_{\rho=\frac{n\pi}{x}}-(-1)^{n}(h+\omega(x)),\,\,n=1,2,\ldots.
\]

Thus, by using (\ref{eq:Besselj}) in (\ref{eq:Aux1}) the cardinal
series representation (\ref{eq: PhiSeriesSinc-1-1}) is obtained with
the coefficients $\mathring{\phi}_{n}(x)=F_{n}(x)$. The results on
the convergence of the series in (\ref{eq: SSeriesSinc-1}) and (\ref{eq: PhiSeriesSinc-1-1})
are obtained following the same arguments from the proof of Theorem
\ref{thm:Let-.-The OldSIn}.
\end{proof}
\begin{thm}
Let $q\in\mathcal{L}_{2}\left[0,L\right]$. The functions $T'\left(\rho,x\right)$
and $\psi'\left(\rho,x\right)$ admit the following representations

\begin{align}
T'\left(\rho,x\right) & =\cos(\rho(L-x))+\frac{\sin(\rho(L-x))}{\rho}\omega_{L}(x)\label{eq: DTSeries}\\
 & +\frac{1}{2\rho\sin(\rho(L-x))}\sum_{n=0}^{\infty}\mathring{t}_{n}(x)\left(\sinc\left(\frac{2\rho(L-x)}{\pi}-2n-1\right)+\sinc\left(\frac{2\rho(L-x)}{\pi}+2n+1\right)\right)\nonumber 
\end{align}
~with 
\[
\mathring{t}_{n}(x)=\left.T'\left(\rho,x\right)\right|_{\rho=\frac{2n+1}{2(L-x)}\pi}-2(-1)^{n}\omega_{L}(x),\,n=0,1,\ldots,
\]
and
\begin{align}
\psi'\left(\rho,x\right) & =\rho\sin(\rho(L-x))+(H+\omega_{L}(x))\rho(L-x)j_{1}(\rho(L-x))\label{eq: DPsiSeries}\\
 & +\sum_{n=0}^{\infty}\mathring{\psi}_{n}(x)\left(\sinc\left(\frac{\rho(x-L)}{\pi}-n\right)+\sinc\left(\frac{\rho(x-L)}{\pi}+n\right)\right)\nonumber 
\end{align}
with 
\[
\mathring{\psi}_{0}(x)=\frac{\psi'\left(0,x\right)}{2},\,\,\mathring{\psi}_{n}(x)=\left.\psi'\left(\rho,x\right)\right|_{\rho=\frac{n\pi}{L-x}}+(-1)^{n}(H+\omega_{L}(x)),\,n=1,2,\ldots.
\]
The series in (\ref{eq: DTSeries}) and (\ref{eq: DPsiSeries}) converge
uniformly on any compact subset of the complex plane of the variable
$\rho$. For $\rho$ on the real line, the convergence holds in the
norm of $\mathcal{L}_{2}(\mathbb{R})$, absolutely and uniformly.
\end{thm}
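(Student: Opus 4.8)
The plan is to obtain this theorem from Theorem~\ref{thm:sincDerivatives} by the same reflection that produced the $(T,\psi)$ representations from the $(S,\phi)$ ones, paying attention to the extra sign coming from differentiation. Introduce the reflected potential $\tilde{q}(\xi):=q(L-\xi)$ on $[0,L]$ and let $\tilde{S}(\rho,\xi)$, $\tilde{\phi}(\rho,\xi)$ be its solutions normalized by $\tilde{S}(\rho,0)=0$, $\tilde{S}'(\rho,0)=1$ and $\tilde{\phi}(\rho,0)=1$, $\tilde{\phi}'(\rho,0)=H$. Putting $\xi=L-x$, one verifies directly that $\tilde{S}(\rho,L-x)$ and $\tilde{\phi}(\rho,L-x)$ solve (\ref{eq:PrincipalEq}) for the original $q$, and matching the conditions at $x=L$ yields the correspondences $T(\rho,x)=-\tilde{S}(\rho,L-x)$ and $\psi(\rho,x)=\tilde{\phi}(\rho,L-x)$. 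These are precisely the relations underlying the already established $(T,\psi)$ theorem.

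Differentiating in $x$ and using the chain rule $\partial_{x}=-\partial_{\xi}\big|_{\xi=L-x}$ gives $T'(\rho,x)=\tilde{S}'(\rho,L-x)$ and $\psi'(\rho,x)=-\tilde{\phi}'(\rho,L-x)$, where the primes on the tilded functions denote differentiation in the second argument. The bookkeeping fact needed throughout is that $\tilde{\omega}(\xi):=\tfrac{1}{2}\int_{0}^{\xi}\tilde{q}(s)\,ds$ satisfies $\tilde{\omega}(L-x)=\tfrac{1}{2}\int_{x}^{L}q(s)\,ds=\omega_{L}(x)$, so that the reflected ``$\omega$'' becomes $\omega_{L}$. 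Substituting $\xi=L-x$ into representation (\ref{eq: SSeriesSinc-1}) applied to $\tilde{S}'$ then reproduces (\ref{eq: DTSeries}) verbatim, with coefficients $\mathring{t}_{n}(x)=\mathring{\tilde{s}}_{n}(L-x)$; the two minus signs (one from $T=-\tilde{S}$, one from the chain rule) cancel, so the sinc sum keeps its sign. For $\psi'$ the single minus sign from the chain rule survives: applying (\ref{eq: PhiSeriesSinc-1-1}) to $\tilde{\phi}'$ and multiplying by $-1$ turns the free terms $-\rho\sin(\rho\xi)-(H+\tilde{\omega}(\xi))\rho\xi j_{1}(\rho\xi)$ into $\rho\sin(\rho(L-x))+(H+\omega_{L}(x))\rho(L-x)j_{1}(\rho(L-x))$ and flips the sinc sum, so that $\mathring{\psi}_{n}(x)=-\mathring{\tilde{\phi}}_{n}(L-x)$; the evenness of $\sinc$ lets one rewrite the arguments $\tfrac{\rho(L-x)}{\pi}\mp n$ as $\tfrac{\rho(x-L)}{\pi}\pm n$, matching (\ref{eq: DPsiSeries}) exactly.

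It remains to read off the sample-point formulas and to record convergence. As in the proof of Theorem~\ref{thm:sincDerivatives}, evaluating the $T'$-series at the nodes $\rho=\tfrac{(2n+1)\pi}{2(L-x)}$ makes every sinc vanish except the $n$-th, and using $\sin(\rho(L-x))=(-1)^{n}$, $\cos(\rho(L-x))=0$ at these points one solves the resulting scalar equation for $\mathring{t}_{n}(x)$; the same evaluation of the $\psi'$-series at $\rho=\tfrac{n\pi}{L-x}$ (together with the identity (\ref{eq:Besselj}), which gives $\rho(L-x)j_{1}(\rho(L-x))=-(-1)^{n}$ there) yields $\mathring{\psi}_{n}(x)=\psi'(\tfrac{n\pi}{L-x},x)+(-1)^{n}(H+\omega_{L}(x))$ for $n\geq1$, while the limit $\rho\to0$ gives $\mathring{\psi}_{0}(x)=\psi'(0,x)/2$. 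All convergence statements are inherited directly from Theorem~\ref{thm:sincDerivatives}: for each fixed $x\in[0,L]$ the $\rho$-series for $T'(\rho,x)$ and $\psi'(\rho,x)$ are, term by term, those of $\tilde{S}'(\rho,L-x)$ and $-\tilde{\phi}'(\rho,L-x)$, so uniform convergence on compact subsets of $\mathbb{C}$ and $\mathcal{L}_{2}(\mathbb{R})$-convergence in $\rho$ hold verbatim. I expect the only delicate point to be the sign bookkeeping of the previous paragraph---in particular, why $T'$ retains the sign of its sinc sum while $\psi'$ reverses it, and the corresponding change of sign of the $(-1)^{n}(H+\omega_{L})$ term relative to the $\phi'$ case---everything else being a routine transcription of Theorem~\ref{thm:sincDerivatives}.
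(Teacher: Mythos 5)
Your proposal is correct and takes essentially the same route as the paper, whose entire proof is the one-line remark that the representations follow from Theorem~\ref{thm:sincDerivatives} ``by flipping the interval''---your reflection argument via $\tilde{q}(\xi)=q(L-\xi)$, the correspondences $T(\rho,x)=-\tilde{S}(\rho,L-x)$, $\psi(\rho,x)=\tilde{\phi}(\rho,L-x)$, and the sign bookkeeping for the derivatives is precisely the detailed content of that remark. One incidental observation: carrying out your node evaluation at $\rho=\frac{(2n+1)\pi}{2(L-x)}$ actually yields $\mathring{t}_{n}(x)=\frac{(-1)^{n}(2n+1)\pi}{L-x}\,T'\bigl(\frac{(2n+1)\pi}{2(L-x)},x\bigr)-2\omega_{L}(x)$, consistent with the pattern of (\ref{eq:CoefsDErS}), which indicates that the coefficient formula for $\mathring{t}_{n}$ as printed in the theorem statement contains a typographical error rather than any gap in your argument.
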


\begin{proof}
The series representations are obtained from those of Theorem \ref{thm:sincDerivatives}
by flipping the interval.
\end{proof}
\begin{rem}
The obtained series representations for the solutions are not limited
to potentials from $\mathcal{L}_{2}\left[0,L\right]$. In \cite{Bondarenko},
\cite{Hryniv2003} and \cite{Hryniv2004}, definitions and construction
of transmutation operators for potentials in $W_{2}^{-1}\left[0,L\right]$
are presented. Similar series representations for the integral transmutation
kernels of the corresponding Volterra integral operators can be obtained
in this case as well, that leads to similar cardinal series representations
for the solutions. 
\end{rem}

\subsection{Cardinal series representation for solutions when $q\in W_{2}^{1}\left[0,L\right]$\label{subsec:NewCardinal-series-}}

Let $q\in W_{2}^{1}\left[0,L\right]$. We obtain another cardinal
series representation for the solutions $S(\pp,x)$, $T\left(\rho,x\right)$,
$\phi(\rho,x)$ and $\psi(\rho,x)$ of equation (\ref{eq:PrincipalEq})
with the aid of the integral representations from Theorem \ref{thm:qC1SolutionsInegralRepSPhi}.
First, consider the following Fourier expansion of the kernel $\mathcal{\boldsymbol{S}}_{22}(x,t)$. 
\begin{prop}
\label{thm:The-transmutations-kernelExpan-1}The kernel $\mathcal{\boldsymbol{S}}_{22}(x,t)$
from (\ref{eq:NewRepIntegralS}) admits the series representation
\begin{equation}
\mathcal{\boldsymbol{S}}_{22}(x,t)=\sum_{n=0}^{\infty}\frac{\tilde{b}_{n}(x)}{x}\sin\left(\frac{2n+1}{2x}\pi t\right),\label{eq:Series Kernel S-1}
\end{equation}
where for every $x\in[0,L]$ the series converges in $\mathcal{L}_{2}[0,x]$.
\end{prop}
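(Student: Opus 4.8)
The plan is to read (\ref{eq:Series Kernel S-1}) as the abstract Fourier expansion of the square-integrable function $\mathcal{\boldsymbol{S}}_{22}(x,\cdot)$ in the orthonormal sine basis, following the same strategy as in the proof of Proposition \ref{thm:The-transmutations-kernelExpan}, but now claiming only convergence in $\mathcal{L}_{2}[0,x]$ in place of absolute and uniform convergence.

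First I would fix $x\in(0,L]$ and recall, exactly as in the proof of Proposition \ref{thm:The-transmutations-kernelExpan} (see \cite[p. 80, Ex. 6]{Folland}), that $\left\{\sqrt{2/x}\,\sin\left(\frac{2n+1}{2x}\pi t\right)\right\}_{n=0}^{\infty}$ is an orthonormal basis of $\mathcal{L}_{2}[0,x]$. Next I would invoke Theorem \ref{thm: FourierRepTransKernel}, which for $q\in W_{2}^{1}[0,L]$ guarantees $\mathcal{\boldsymbol{S}}_{22}(x,\cdot)\in\mathcal{L}_{2}[0,x]$. Completeness of the basis (Riesz--Fischer) then yields the expansion of $\mathcal{\boldsymbol{S}}_{22}(x,\cdot)$ in this basis with convergence in the $\mathcal{L}_{2}[0,x]$-norm, and setting
\[
\tilde{b}_{n}(x)=2\int_{0}^{x}\mathcal{\boldsymbol{S}}_{22}(x,s)\sin\left(\frac{2n+1}{2x}\pi s\right)ds,
\]
so as to absorb the normalization constant $\sqrt{2/x}$ into the Fourier coefficient, produces exactly (\ref{eq:Series Kernel S-1}).

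There is essentially no obstacle here, and in contrast to Proposition \ref{thm:The-transmutations-kernelExpan} no extension or reflection of the kernel is required: the sole input is square integrability in the second argument. The point worth flagging is what one loses rather than what one must work for. Because $\mathcal{\boldsymbol{S}}_{22}$ is merely an $\mathcal{L}_{2}$ function (being a second $t$-derivative of a kernel that is only $W_{2}^{1}$ in $t$), the convergence cannot be upgraded to absolute and uniform as it was for $\mathcal{\boldsymbol{S}}(x,t)$; the $\mathcal{L}_{2}$ conclusion stated is the sharp one obtainable at this regularity. For $x=0$ the interval degenerates and the claim is vacuous.
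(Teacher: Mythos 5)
Your proof is correct and coincides with the paper's own argument: both read (\ref{eq:Series Kernel S-1}) as the abstract Fourier expansion of $\mathcal{\boldsymbol{S}}_{22}(x,\cdot)\in\mathcal{L}_{2}[0,x]$ in the orthonormal basis $\left\{ \sqrt{2/x}\,\sin\left(\frac{2n+1}{2x}\pi t\right)\right\} _{n=0}^{\infty}$ and conclude $\mathcal{L}_{2}[0,x]$-convergence from completeness of the basis. The only slip is one of attribution: the square integrability of $\mathcal{\boldsymbol{S}}_{22}(x,\cdot)$ for $q\in W_{2}^{1}[0,L]$ is stated not in Theorem \ref{thm: FourierRepTransKernel} (which only treats $q\in\mathcal{L}_{2}[0,L]$ and gives $\boldsymbol{\mathcal{S}}(x,\cdot)\in W_{2}^{1}[0,x]$) but in the paragraph following it, where the boundary values of $\boldsymbol{\mathcal{S}}_{22}$ and $\boldsymbol{\mathcal{G}}_{22}$ are listed under the assumption $q\in W_{2}^{1}[0,L]$.
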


\begin{proof}
For any $x\in[0,L]$, formula (\ref{eq:Series Kernel S-1}) is a Fourier
expansion of $\mathcal{\boldsymbol{S}}_{22}(x,\cdot)\in\mathcal{L}_{2}[0,x]$
in terms of the orthonormal basis $\left\{ \sqrt{2/x}\sin\left(\frac{2n+1}{2x}\pi t\right)\right\} _{n=0}^{\infty}$
of $\mathcal{L}_{2}[0,x]$. Hence the series (\ref{eq:Series Kernel S-1})
converges in $\mathcal{L}_{2}[0,x]$.
\end{proof}
\begin{thm}
\label{thm:Let-.-TheNewSinc}Let $q\in W_{2}^{1}\left[0,L\right]$.
The solutions $S(\pp,x)$ and $\phi(\rho,x)$ of equation (\ref{eq:PrincipalEq})
admit the following series representations
\begin{align}
S\left(\rho,x\right) & =\frac{\sin(\rho x)}{\rho}-\frac{\cos(\rho x)}{\rho^{2}}\omega(x)+\frac{\sin(\rho x)}{\rho^{3}}q^{+}(x)\label{eq:NewCardinalS}\\
 & -\frac{1}{2\rho^{3}\sin(\rho x)}\sum_{n=0}^{\infty}\mathfrak{s_{n}}(x)\left(\sinc\left(\frac{2\rho x}{\pi}-2n-1)\right)+\sinc\left(\frac{2\rho x}{\pi}+2n+1\right)\right),\nonumber 
\end{align}

with
\begin{equation}
\mathfrak{s_{n}}(x)=(-1)^{n+1}S\left(\frac{2n+1}{2x}\pi,x\right)\frac{((2n+1)\pi)^{3}}{4x^{3}}+2\left(\frac{2n+1}{2x}\pi\right)^{2}+2q^{+}(x),\,n=0,1,\ldots,\label{eq:NewCoeffs}
\end{equation}
and
\begin{align}
\phi(\rho,x) & =\cos(\rho x)+\frac{\sin(\rho x)}{\rho}(h+\omega(x))-\frac{x}{\rho}j_{1}(\rho x)q^{h}(x)\label{eq:NewCardinalPhi}\\
 & -\frac{1}{\rho^{2}}\sum_{n=1}^{\infty}\Phi_{n}(x)\left(\sinc\left(\frac{\rho x}{\pi}-n\right)+\sinc\left(\frac{\rho x}{\pi}+n\right)\right).\nonumber 
\end{align}
with
\[
\Phi_{n}(x)=\left((-1)^{n}-\phi\left(\frac{n\pi}{x},x\right)\right)\left(\frac{n\pi}{x}\right)^{2}+(-1)^{n}q^{h}(x),\,n=1,2,\ldots.
\]
The series in (\ref{eq:NewCardinalS}) and (\ref{eq:NewCardinalPhi})
converge uniformly on any compact subset of the complex plane of the
variable $\rho$. For $\rho$ on the real line, the convergence holds
in the norm of $\mathcal{L}_{2}(\mathbb{R})$, absolutely and uniformly.
\end{thm}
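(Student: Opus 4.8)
The plan is to obtain both representations from the refined integral representations of Theorem~\ref{thm:qC1SolutionsInegralRepSPhi}, combined with a Fourier expansion of the twice-differentiated transmutation kernels and the Whittaker--Shannon--Kotel'nikov (W.S.K) sampling theorem, following the same scheme as in Theorem~\ref{thm:Let-.-The OldSIn}. The difference is only that the explicit ``smooth part'' now carries more terms, so that the remaining cardinal series decays like $1/\rho^{3}$ for $S$ and like $1/\rho^{2}$ for $\phi$; one should first note that the explicit coefficient $\tfrac14(q(x)-2\omega^{2}(x)+q(0))$ in (\ref{eq:NewRepIntegralS}) equals $q^{+}(x)$ and that $\tfrac14(q(x)-4h\omega(x)-2\omega^{2}(x)-q(0))$ in (\ref{eq:NewRepIntegralPhi}) equals $q^{h}(x)$, so the three, respectively two, leading terms of (\ref{eq:NewCardinalS}) and (\ref{eq:NewCardinalPhi}) already agree.

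For $S(\rho,x)$ I would substitute the Fourier expansion (\ref{eq:Series Kernel S-1}) of $\boldsymbol{\mathcal S}_{22}(x,\cdot)$ into the integral $\int_{0}^{x}\boldsymbol{\mathcal S}_{22}(x,t)\sin(\rho t)\,dt$ appearing in (\ref{eq:NewRepIntegralS}) and integrate term by term with the help of formula (\ref{eq:FormulaOFIntegral}). This reproduces the series in (\ref{eq:NewCardinalS}) with $\mathfrak s_{n}(x)=(-1)^{n}\tilde b_{n}(x)$. To obtain the explicit coefficient formula (\ref{eq:NewCoeffs}) I evaluate (\ref{eq:NewCardinalS}) at the sampling points $\rho=\tfrac{2m+1}{2x}\pi$, $m=0,1,\dots$: there $\cos(\rho x)=0$ and $\sin(\rho x)=(-1)^{m}$, and since $\sinc$ vanishes at every nonzero integer, only the $n=m$ summand of the series survives. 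Solving the resulting scalar identity for $\mathfrak s_{m}(x)$ yields (\ref{eq:NewCoeffs}).

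For $\phi(\rho,x)$ I would start from (\ref{eq:NewRepIntegralPhi}) and rewrite the term $\tfrac{\cos(\rho x)}{\rho^{2}}q^{h}(x)$ by means of the identity (\ref{eq:Besselj}), $\cos(\rho x)=\sinc(\tfrac{\rho x}{\pi})-\rho x\,j_{1}(\rho x)$. This splits off the explicit term $-\tfrac{x}{\rho}j_{1}(\rho x)q^{h}(x)$ of (\ref{eq:NewCardinalPhi}) and leaves $\tfrac1{\rho^{2}}\big(q^{h}(x)\sinc(\tfrac{\rho x}{\pi})-\int_{0}^{x}\boldsymbol{\mathcal G}_{22}(x,t)\cos(\rho t)\,dt\big)$. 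The function in the parentheses belongs to $PW_{x}^{2}$, since $\boldsymbol{\mathcal G}_{22}(x,\cdot)\in\mathcal L_{2}[0,x]$ and $\sinc(\tfrac{\rho x}{\pi})\in PW_{x}^{2}$, so the W.S.K theorem applies. Sampling at $\rho=\tfrac{n\pi}{x}$ and using (\ref{eq:NewRepIntegralPhi}) at these points identifies the $n$-th W.S.K coefficient with $-\Phi_{n}(x)$, which produces (\ref{eq:NewCardinalPhi}). The point that lets the series start at $n=1$ is that the $n=0$ coefficient equals $q^{h}(x)-\int_{0}^{x}\boldsymbol{\mathcal G}_{22}(x,t)\,dt$; this must vanish because $\phi(\rho,x)$ together with all the explicit terms are entire in $\rho$, so $\tfrac1{\rho^{2}}\big(q^{h}(x)\sinc(\tfrac{\rho x}{\pi})-\int_{0}^{x}\boldsymbol{\mathcal G}_{22}(x,t)\cos(\rho t)\,dt\big)$ is entire, which forces its even, entire numerator to vanish at $\rho=0$.

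The convergence assertions then follow verbatim from the convergence principle \cite[p.~57]{Higgings} and the W.S.K theorem, exactly as in Theorem~\ref{thm:Let-.-The OldSIn}. I expect the main technical obstacle to be the justification of the term-by-term integration in the $S$ case: in contrast with Proposition~\ref{thm:The-transmutations-kernelExpan}, the expansion (\ref{eq:Series Kernel S-1}) converges only in $\mathcal L_{2}[0,x]$, so the interchange of summation and integration must be justified by a Parseval--Cauchy--Schwarz argument, namely that $g\mapsto\int_{0}^{x}g(t)\sin(\rho t)\,dt$ is continuous from $\mathcal L_{2}[0,x]$ into the space of entire functions with the topology of uniform convergence on compact sets (the same device used for the derivatives in Theorem~\ref{thm:sincDerivatives}, via Parseval's identity \cite[p.~16]{Glazman}). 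The only other point requiring care is the vanishing of the $n=0$ coefficient for $\phi$ described above.
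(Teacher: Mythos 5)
Your proposal is correct and follows essentially the same route as the paper: both rest on the refined integral representations of Theorem \ref{thm:qC1SolutionsInegralRepSPhi} (with the explicit coefficients identified as $q^{+}(x)$ and $q^{h}(x)$), on substituting the Fourier expansion (\ref{eq:Series Kernel S-1}) into (\ref{eq:NewRepIntegralS}) and integrating via (\ref{eq:FormulaOFIntegral}) for $S$, on the W.S.K theorem for $\phi$, and on evaluation at the sampling points $\rho=\frac{2m+1}{2x}\pi$, respectively $\rho=\frac{n\pi}{x}$, to extract the coefficient formulas (\ref{eq:NewCoeffs}) and $\Phi_{n}(x)$. The only divergence is organizational, in the $\phi$ case: the paper samples $\hat{F}(\rho,x)=\rho^{2}\cos(\rho x)-\rho^{2}\phi(\rho,x)+\rho\sin(\rho x)(h+\omega(x))+\cos(\rho x)q^{h}(x)=\int_{0}^{x}\boldsymbol{\mathcal{G}}_{22}(x,t)\cos(\rho t)\,dt$, whose zeroth sample is $\hat{F}(0,x)=q^{h}(x)$, and invokes the identity (\ref{eq:Besselj}) only afterwards to absorb the resulting $\sinc$ term, whereas you apply (\ref{eq:Besselj}) first and sample $G(\rho)=q^{h}(x)\sinc\left(\frac{\rho x}{\pi}\right)-\int_{0}^{x}\boldsymbol{\mathcal{G}}_{22}(x,t)\cos(\rho t)\,dt$, which obliges you to verify the extra fact $G(0)=0$ so that the series starts at $n=1$. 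Your entirety argument for this is sound, and the fact can be seen even more directly from the kernel boundary values (\ref{eq:TransOpGxx}) and (\ref{eq:DerKernelG}): $\int_{0}^{x}\boldsymbol{\mathcal{G}}_{22}(x,t)\,dt=\boldsymbol{\mathcal{G}}_{2}(x,x)-\boldsymbol{\mathcal{G}}_{2}(x,0)=q^{h}(x)$, which is precisely what the paper's ordering builds in automatically. Finally, the point you flag as the main technical obstacle---justifying the term-by-term integration against the merely $\mathcal{L}_{2}$-convergent expansion (\ref{eq:Series Kernel S-1})---is handled by exactly the device the paper itself uses in the proof of Theorem \ref{thm:sincDerivatives} (interchange via Parseval's identity), so no gap remains.
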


\begin{proof}
According to (\ref{eq:NewRepIntegralPhi}), 
\[
\hat{F}(\rho,x):=\cos(\rho x)\rho^{2}-\phi(\rho,x)\rho^{2}+\sin(\rho x)\rho(h+\omega(x))+\cos(\rho x)q^{h}(x)
\]
 belongs to $PW_{x}^{2}$ with respect to the variable $\rho$. Application
of the W.S.K theorem leads to the cardinal series expansion
\[
\hat{F}(\rho,x)=\sinc\left(\frac{\rho x}{\pi}\right)\hat{F}(0,x)+\sum_{n=1}^{\infty}\hat{F}\left(\frac{n\pi}{x},x\right)\left(\sinc\left(\frac{\rho x}{\pi}+n\right)+\sinc\left(\frac{\rho x}{\pi}-n\right)\right).
\]
By rewriting the series representation for $\hat{F}(\rho,x)$ in terms
of the solution $\phi(\rho,x)$ and noting that $\hat{F}(0,x)=q^{h}(x)$,
we obtain
\begin{align}
\phi(\rho,x) & =\cos(\rho x)+\frac{\sin(\rho x)}{\rho}(h+\omega(x))+\left(\frac{\cos(\rho x)}{\rho^{2}}-\frac{\sin(\rho x)}{\rho^{3}x}\right)q^{h}(x)\nonumber \\
 & -\frac{1}{\rho^{2}}\sum_{n=1}^{\infty}\Phi_{n}(x)\left(\sinc\left(\frac{\rho x}{\pi}-n\right)+\sinc\left(\frac{\rho x}{\pi}+n\right)\right).\label{eq:NwePhiAuxiliar=0000B7}
\end{align}

Finally, by using (\ref{eq:Besselj}) in (\ref{eq:NwePhiAuxiliar=0000B7})
we obtain (\ref{eq:NewCardinalPhi}). 

Analogous to the proof of the series representation (\ref{eq: SSeriesSinc}),
substitution of the Fourier series (\ref{eq:Series Kernel S-1}) into
(\ref{eq:NewRepIntegralS}) leads to (\ref{eq:NewCardinalS}), where
$\mathfrak{s_{n}}(x)=(-1)^{n}\tilde{b}_{n}(x)$.

Additionally, substitution of $\rho=\frac{2m+1}{2x}\pi$, $m\in\mathbb{N}\cup\left\{ 0\right\} $
in (\ref{eq:NewCardinalS}) leads to (\ref{eq:NewCoeffs}).

From the convergence principle (see \cite[p. 57]{Higgings}) and the
W.S.K theorem, it follows that for $\rho$ on the real line and on
any compact subset of the complex plane, the convergence of (\ref{eq:NewCardinalS})
and (\ref{eq:NewCardinalPhi}) holds uniformly. Additionally, the
absolute convergence follows from Hölder's inequality, see Theorem
6.16 in \cite[p. 53]{Higgings}.
\end{proof}
\begin{rem}
Notice that the potential can be written in terms of the coefficients
of the cardinal series representations (\ref{eq:NewCardinalS}) and
(\ref{eq:NewCardinalPhi})
\begin{equation}
q(x)=2(q^{+}(x)+q^{h}(x)+\omega^{2}(x)+h\omega(x)).\label{eq:op2-1}
\end{equation}
\end{rem}

\begin{thm}
Let $q\in W_{2}^{1}\left[0,L\right]$. The solutions $T(\pp,x)$ and
$\psi(\rho,x)$ of equation (\ref{eq:PrincipalEq}) admit the following
series representations{\small{}
\begin{align}
T\left(\rho,x\right) & =\frac{\sin(\rho(x-L))}{\rho}+\frac{\cos(\rho(x-L))}{\rho^{2}}\omega_{L}(x)+\frac{\sin(\rho(x-L))}{\rho^{3}}q_{L}(x)\label{eq:NewCardinalS-1}\\
 & -\frac{1}{2\rho^{3}\sin(\rho(x-L))}\sum_{n=0}^{\infty}\tilde{t}_{n}(x)\left(\sinc\left(\frac{2\rho(x-L)}{\pi}-2n-1)\right)+\sinc\left(\frac{2\rho(x-L)}{\pi}+2n+1\right)\right),\nonumber 
\end{align}
}where
\begin{align*}
\tilde{t}_{n}(x) & =(-1)^{n+1}T\left(\frac{2n+1}{2(x-L)}\pi,x\right)\frac{((2n+1)\pi)^{3}}{4(x-L)^{3}}+\frac{1}{2}\left(\frac{2n+1}{x-L}\pi\right)^{2}+2q_{L}(x),\,n=0,1,\ldots,
\end{align*}
and 
\begin{align}
\psi(\rho,x) & =\cos(\rho(L-x))+\frac{\sin(\rho(L-x))}{\rho}(H+\omega_{L}(x))-\frac{L-x}{\rho}j_{1}(\rho(L-x))q_{L}^{H}(x)\nonumber \\
 & -\frac{1}{\rho^{2}}\sum_{n=1}^{\infty}\tilde{\Psi}_{n}(x)\left(\sinc\left(\frac{\rho(x-L)}{\pi}-n\right)+\sinc\left(\frac{\rho(x-L)}{\pi}+n\right)\right),\label{eq:NewCardinalPsi}
\end{align}
where
\[
\tilde{\Psi}_{n}(x)=\psi\left(\frac{n\pi}{x-L},x\right)\left(\frac{n\pi}{x-L}\right)^{2}-(-1)^{n}\left(\frac{n\pi}{x-L}\right)^{2}-(-1)^{n}q_{L}^{H}(x),\,n=1,2,\ldots.
\]
The series in (\ref{eq:NewCardinalS-1}) and (\ref{eq:NewCardinalPsi})
converge uniformly on any compact subset of the complex plane of the
variable $\rho$. For $\rho$ on the real line, the convergence of
(\ref{eq:NewCardinalS-1}) and (\ref{eq:NewCardinalPsi}) holds in
the norm of $\mathcal{L}_{2}(\mathbb{R})$, absolutely and uniformly.
\end{thm}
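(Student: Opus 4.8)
The plan is to deduce both representations from Theorem~\ref{thm:Let-.-TheNewSinc} by the reflection $x\mapsto L-x$, exactly as the earlier boundary-value versions were obtained ``by flipping the interval.'' Set $\tilde{q}(x):=q(L-x)$, which again lies in $W_{2}^{1}[0,L]$, and note that $y\mapsto y(L-x)$ carries solutions of (\ref{eq:PrincipalEq}) with potential $q$ into solutions of the same equation with potential $\tilde{q}$ (the second derivative is invariant under the reflection, so only $q$ gets reflected). Writing $\tilde{S},\tilde{\phi}$ for the solutions $S,\phi$ associated with $\tilde{q}$ and with the constant $H$ playing the role of $h$, and tracking the initial data at $x=L$ (where $\frac{d}{dx}=-\frac{d}{d(L-x)}$), one gets the identifications
\[
\psi(\rho,x)=\tilde{\phi}(\rho,L-x),\qquad T(\rho,x)=-\tilde{S}(\rho,L-x),
\]
where the minus sign in the second identity reflects $T'(\rho,L)=1$ against $\tilde{S}'(\rho,0)=1$.

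Next I would record how the auxiliary quantities transform. A change of variable $s\mapsto L-s$ in the defining integrals gives $\omega_{L}(x)=\tilde{\omega}(L-x)$, and then directly from the definitions $q_{L}(x)=\tilde{q}^{+}(L-x)$ and $q_{L}^{H}(x)=\tilde{q}^{h}(L-x)$, with $\tilde{q}^{+},\tilde{q}^{h}$ formed from $\tilde{q}$ and $H$. Applying Theorem~\ref{thm:Let-.-TheNewSinc} to $\tilde{S}$ and $\tilde{\phi}$ and substituting $x\mapsto L-x$ together with these identities yields the bracketed series in (\ref{eq:NewCardinalS-1}) and (\ref{eq:NewCardinalPsi}). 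To bring the result into the stated form one uses $\sin(\rho(L-x))=-\sin(\rho(x-L))$ and $\cos(\rho(L-x))=\cos(\rho(x-L))$ for the elementary prefactors, the evenness of $\sinc$ to replace the arguments $\frac{\rho(L-x)}{\pi}\mp n$ by $\frac{\rho(x-L)}{\pi}\pm n$, and the fact that $S,T,\phi,\psi$ are all even in $\rho$ (since (\ref{eq:PrincipalEq}) depends on $\rho$ only through $\rho^{2}$) to rewrite the sample values $\tilde{S}(\frac{2n+1}{2(L-x)}\pi,L-x)=-T(\frac{2n+1}{2(x-L)}\pi,x)$ and $\tilde{\phi}(\frac{n\pi}{L-x},L-x)=\psi(\frac{n\pi}{x-L},x)$. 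Collecting terms then produces the coefficients $\tilde{t}_{n}(x)$ and $\tilde{\Psi}_{n}(x)$.

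The convergence assertions require no new work: the reflection $x\mapsto L-x$ is a bijection of $[0,L]$ and leaves both the real axis and every compact subset of the $\rho$-plane invariant, so the uniform-on-compacts convergence and the $\mathcal{L}_{2}(\mathbb{R})$, absolute and uniform convergence on the real line transfer verbatim from Theorem~\ref{thm:Let-.-TheNewSinc}.

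I expect the only delicate point to be the sign bookkeeping. The reflection carries two odd features that must be combined consistently: the factor $-1$ from $\frac{d}{dx}=-\frac{d}{d(L-x)}$ (hence $T=-\tilde{S}(\cdot,L-x)$), and the sign flips $\sin(\rho(L-x))=-\sin(\rho(x-L))$ and $(x-L)^{3}=-(L-x)^{3}$ that occur inside the $\mathfrak{s}_{n}$-type coefficients. These have to cancel or reinforce in precisely the right way so that the overall prefactors $-\tfrac{1}{2\rho^{3}\sin(\rho(x-L))}$ and $-\tfrac{1}{\rho^{2}}$, together with the coefficient formulas for $\tilde{t}_{n}$ and $\tilde{\Psi}_{n}$, emerge with the correct signs; this is the step where a careless computation would go astray.
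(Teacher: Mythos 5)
Your proposal is correct and is precisely the paper's argument: the paper's entire proof is the single observation that a change of variable (the reflection $x\mapsto L-x$) in (\ref{eq:NewCardinalS}) and (\ref{eq:NewCardinalPhi}) yields (\ref{eq:NewCardinalS-1}) and (\ref{eq:NewCardinalPsi}), and you have spelled out exactly that reflection, with the correct identifications $T(\rho,x)=-\tilde{S}(\rho,L-x)$, $\psi(\rho,x)=\tilde{\phi}(\rho,L-x)$, $\omega_{L}(x)=\tilde{\omega}(L-x)$, $q_{L}(x)=\tilde{q}^{+}(L-x)$, $q_{L}^{H}(x)=\tilde{q}^{h}(L-x)$ and the evenness of $\sinc$ and of the solutions in $\rho$. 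Your sign-bookkeeping caution is well placed: carried through, your computation gives $\tilde{t}_{n}$ exactly as stated (the two odd signs cancel), but for $\psi$ it yields $\tilde{\Psi}_{n}(x)=\left((-1)^{n}-\psi\left(\frac{n\pi}{x-L},x\right)\right)\left(\frac{n\pi}{x-L}\right)^{2}+(-1)^{n}q_{L}^{H}(x)$, the \emph{negative} of the formula printed in the theorem --- a sign slip in the paper's statement rather than in your method, as one confirms by evaluating (\ref{eq:NewCardinalPsi}) at the sample points $\rho=\frac{n\pi}{x-L}$, where the right-hand side reduces to $(-1)^{n}+(-1)^{n}q_{L}^{H}(x)/\rho^{2}-\tilde{\Psi}_{n}(x)/\rho^{2}$.
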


\begin{proof}
A change of variable in the series representations (\ref{eq:NewCardinalS})
and (\ref{eq:NewCardinalPhi}) gives us (\ref{eq:NewCardinalS-1})
and (\ref{eq:NewCardinalPsi}), respectively.
\end{proof}
\begin{rem}
The cardinal series representation of $\psi(\rho,x)$ at $\rho=0$
(as well as the other cardinal series representations of Section \ref{sec:Cardinal-series-representation})
is obtained by taking the corresponding limits:
\begin{align*}
\psi(0,x) & =1+(L-x)(H+\omega_{L}(x))-\frac{(L-x)^{2}}{3}q_{L}^{H}(x)+\frac{2(L-x)^{2}}{\pi^{2}}\sum_{n=1}^{\infty}(-1)^{n}\frac{\tilde{\Psi}_{n}(x)}{n^{2}}.
\end{align*}
 
\end{rem}

\subsection{Remainder estimates\label{subsec:Error-estimates}}

This section presents remainder estimates of the cardinal series representations
for solutions $S(\rho,x)$, $\phi(\rho,x)$ and their derivatives.
Similar results are obtained for remainder estimates of the cardinal
series representations for solutions $T(\rho,x)$, $\psi(\rho,x)$
and their derivatives. 

Consider the partial sums of the representations (\ref{eq: SSeriesSinc}),
(\ref{eq: SSeriesSinc-1}), (\ref{eq: PhiSeriesSinc-1}) and (\ref{eq: PhiSeriesSinc-1-1})
{\small{}
\begin{align}
S_{N}\left(\rho,x\right) & =\frac{\sin(\pp x)}{\rho}+\frac{1}{2\rho\sin(\pp x)}\sum_{n=0}^{N-1}s_{n}(x)\left(\sinc\left(\frac{2\rho x}{\pi}-2n-1\right)+\sinc\left(\frac{2\rho x}{\pi}+2n+1\right)\right),\label{eq: TruncatedS}\\
\mathring{S}_{N}\left(\rho,x\right) & =\cos(\rho x)+\frac{\omega(x)}{\rho}\sin(\rho x)+\frac{1}{2\rho\sin(\pp x)}\sum_{n=0}^{N-1}\mathring{s}_{n}(x)\left(\sinc\left(\frac{2\rho x}{\pi}-2n-1\right)+\sinc\left(\frac{2\rho x}{\pi}+2n+1\right)\right),\nonumber \\
\varphi_{N}\left(\rho,x\right) & =-\rho xj_{1}(\rho x)+\sum_{n=0}^{N-1}\phi_{n}(x)\left(\sinc\left(\frac{\rho x}{\pi}-n\right)+\sinc\left(\frac{\rho x}{\pi}+n\right)\right)\nonumber 
\end{align}
}{\small\par}

and
\begin{align*}
\mathring{\varphi}_{N}\left(\rho,x\right) & =-\rho\sin(\rho x)-(h+\omega(x))\rho xj_{1}(\rho x)+\sum_{n=0}^{N-1}\mathring{\phi}_{n}(x)\left(\sinc\left(\frac{\rho x}{\pi}-n\right)+\sinc\left(\frac{\rho x}{\pi}+n\right)\right),
\end{align*}
 respectively.

For any $x\in(0,L]$ the following remainder estimates hold
\begin{equation}
\left|S(\rho,x)-S_{N}(\rho,x)\right|\leq\frac{\varepsilon_{N}(x)}{\left|\rho\right|},\,\,\text{\ensuremath{\left|\phi(\rho,x)-\varphi_{N}(\rho,x)\right|\leq\varepsilon_{N}(x)}},\label{eq:EStimateS}
\end{equation}

\begin{equation}
\left|S'(\rho,x)-\mathring{S}_{N}(\rho,x)\right|\leq\frac{\varepsilon_{N}(x)}{\left|\rho\right|}\,\,\text{and}\,\,\text{\ensuremath{\left|\phi'(\rho,x)-\mathring{\varphi}_{N}(\rho,x)\right|\leq\varepsilon_{N}(x)}}\label{eq:DErSEStimate}
\end{equation}

for all $\rho\in\mathbb{R},\rho\neq0,$ and
\begin{equation}
\left|S(\rho,x)-S_{N}(\rho,x)\right|\leq\frac{\varepsilon_{N}(x)}{\left|\rho\right|}\sqrt{\frac{\sinh\left(2Cx\right)}{C}},\,\,\text{\ensuremath{\left|\phi(\rho,x)-\varphi_{N}(\rho,x)\right|\leq\varepsilon_{N}(x)\sqrt{\frac{\sinh\left(2Cx\right)}{C}}}},\label{eq:EstimateS2}
\end{equation}

\begin{equation}
\left|S'(\rho,x)-\mathring{S}_{N}(\rho,x)\right|\leq\frac{\varepsilon_{N}(x)}{\left|\rho\right|}\sqrt{\frac{\sinh\left(2Cx\right)}{C}}\,\,\text{and}\,\,\text{\ensuremath{\left|\phi'(\rho,x)-\mathring{\varphi}_{N}(\rho,x)\right|\leq\varepsilon_{N}(x)\sqrt{\frac{\sinh\left(2Cx\right)}{C}}}}\label{eq:EstimateS2-1}
\end{equation}

for all $\rho\in\mathbb{C}\setminus\left\{ 0\right\} $ belonging
to the strip $|\ii\rho|\leq C$, $C\geq0$ where $\varepsilon_{N}(x)$
is a positive function tending to zero when $N\rightarrow\infty$,
i.e., the series in (\ref{eq: SSeriesSinc}) and (\ref{eq: PhiSeriesSinc-1})
converge uniformly respect to the variable $\rho$ in any strip $|\ii\rho|\leq C$,
$C\geq0$.

Denote the partial sums of (\ref{eq:Series Kernel S}) and (\ref{eq:Series Kernel G})
by
\[
\boldsymbol{S}_{N}(x,t)=\sum_{n=0}^{N}\frac{b_{n}(x)}{x}\sin\left(\frac{2n+1}{x}\pi t\right)\,\,\,\text{and}\,\,\,\boldsymbol{G}_{N}(x,t)=\sum_{n=0}^{N}\frac{\mathfrak{g}_{n}(x)}{x}\cos\left(\frac{n\pi t}{x}\right).
\]

The second estimate in (\ref{eq:EStimateS}) follows from {\small{}
\begin{equation}
\ensuremath{\left|\phi(\rho,x)-\varphi_{N}(\rho,x)\right|=\left|\int_{0}^{x}\left(\boldsymbol{G}(x,t)-\boldsymbol{G}_{N}(x,t)\right)\cos(\rho t)dt\right|\leq\varepsilon_{1,N}(x)\int_{0}^{x}\left|\cos(\rho t)\right|dt}\leq x\varepsilon_{1,N}(x)=:\varepsilon_{N}(x),\label{eq:proof real}
\end{equation}
}where $\varepsilon_{1,N}(x)=\left|\boldsymbol{G}(x,t)-\boldsymbol{G}_{N}(x,t)\right|$.

The first estimate in (\ref{eq:EstimateS2}) follows from the Cauchy--Bunyakovsky--Schwarz
inequality,

\begin{align*}
\left|S(\rho,x)-S_{N}(\rho,x)\right| & =\frac{1}{\left|\rho\right|}\left|\int_{0}^{x}\left(\boldsymbol{S}(x,t)-\boldsymbol{S}_{N}(x,t)\right)\sin(\rho t)dt\right|\\
 & \leq\frac{1}{\left|\rho\right|}\left\Vert \boldsymbol{S}(x,t)-\boldsymbol{S}_{N}(x,t)\right\Vert _{\mathcal{L}_{2}(0,x)}\left\Vert \sin(\rho t)\right\Vert _{\mathcal{L}_{2}(0,x)}.
\end{align*}

Since
\[
\left\Vert \sin(\rho t)\right\Vert _{\mathcal{L}_{2}(0,x)}^{2}\leq\frac{\left\Vert e^{i\rho t}\right\Vert _{\mathcal{L}_{2}(0,x)}^{2}+\left\Vert e^{-i\rho t}\right\Vert _{\mathcal{L}_{2}(0,x)}^{2}}{2}=\int_{0}^{x}\cosh(2\ii\rho t)dt=\frac{\sinh(2\ii\rho x)}{2\ii\rho},
\]

we obtain the inequality 
\begin{equation}
\left|S(\rho,x)-S_{N}(\rho,x)\right|\leq\frac{\varepsilon_{N}(x)}{\left|\rho\right|}\sqrt{\frac{\sinh\left(2Cx\right)}{C}}\label{eq:proof}
\end{equation}

with $\varepsilon_{N}(x):=\frac{\left\Vert \boldsymbol{S}(x,t)-\boldsymbol{S}_{N}(x,t)\right\Vert _{\mathcal{L}_{2}[0,x]}}{\sqrt{2}}$,
see Remark 9.1 in \cite{VK2020}. Similarly, the other estimates (\ref{eq:EStimateS})-(\ref{eq:EstimateS2-1})
are obtained.

Consider the partial sums of the representations (\ref{eq:NewCardinalS})
and (\ref{eq:NewCardinalPhi}) 
\begin{align}
\mathtt{s}_{N}\left(\rho,x\right) & =\frac{\sin(\rho x)}{\rho}-\frac{\cos(\rho x)}{\rho^{2}}\omega(x)+\frac{\sin(\rho x)}{\rho^{3}}q^{+}(x)\label{eq:TruncatedNewCardinalS}\\
 & -\frac{1}{2\rho^{3}\sin(\rho x)}\sum_{n=0}^{N-1}\mathfrak{s}_{n}(x)\left(\sinc\left(\frac{2\rho x}{\pi}-2n-1)\right)+\sinc\left(\frac{2\rho x}{\pi}+2n+1\right)\right)\nonumber 
\end{align}
and
\begin{align}
\Phi_{N}(\rho,x) & =\cos(\rho x)+\frac{\sin(\rho x)}{\rho}(h+\omega(x))-\frac{x}{\rho}j_{1}(\rho x)q^{h}(x)\label{eq:TruncatedNewCardinalPhi-1}\\
 & -\frac{1}{\rho^{2}}\sum_{n=1}^{N}\Phi_{n}(x)\left(\sinc\left(\frac{\rho x}{\pi}-n\right)+\sinc\left(\frac{\rho x}{\pi}+n\right)\right).\nonumber 
\end{align}

The following inequalities are valid
\[
\left|S(\rho,x)-\mathtt{s}_{N}(\rho,x)\right|\leq\frac{\varepsilon_{N}(x)}{\left|\rho\right|^{3}}\,\,\text{and}\,\,\text{\ensuremath{\left|\phi(\rho,x)-\Phi_{N}(\rho,x)\right|\leq\frac{\varepsilon_{N}(x)}{\left|\rho\right|^{2}}}},
\]

for all $\rho\in\mathbb{R},\rho\neq0,$ and
\[
\left|S(\rho,x)-\mathtt{s}_{N}(\rho,x)\right|\leq\frac{\varepsilon_{N}(x)}{\left|\rho\right|^{3}}\sqrt{\frac{\sinh(2Cx)}{C}}\,\,\text{and}\,\,\text{\ensuremath{\left|\phi(\rho,x)-\Phi_{N}(\rho,x)\right|\leq\frac{\varepsilon_{N}(x)}{\left|\rho\right|^{2}}\sqrt{\frac{\sinh(2Cx)}{C}}}}
\]

for all $\rho\in\mathbb{C\setminus}\left\{ 0\right\} $ belonging
to a strip $|\ii\rho|\leq C$, $C\geq0$ where $\varepsilon_{N}(x)$
is a positive function tending to zero when $N\rightarrow\infty$.
The proof of the estimate for $\rho\in\mathbb{R},\rho\neq0,$ is analogous
to the proof (\ref{eq:proof real}). For $\rho\in\mathbb{C\setminus}\left\{ 0\right\} $,
the proof of the estimate follows from the Cauchy--Bunyakovsky--Schwarz
inequality, analogous to the proof of (\ref{eq:proof}) (see Theorem
2.2 in \cite{VKComplex}).

\begin{rem}
\label{rem:The-error-analysis}The error analysis of cardinal series
has been extensively investigated. For instance, in \cite[p. 258]{Baumann}
the results from \cite[Section 3.5]{Butzer} were applied to describe
the amplitude error of cardinal series for functions from the class
$PW_{x}^{2}$, see also \cite{Annaby2008}, \cite{Jaggerman}, \cite{jERRY}
and references therein. Recall the amplitude error 
\[
\mathcal{A}_{\varepsilon}[f](\rho)=\sum_{n=-\infty}^{\infty}\left\{ f\left(\frac{n\pi}{x}\right)-\tilde{f}\left(\frac{n\pi}{x}\right)\right\} \sinc\left(\frac{\rho x}{\pi}-n\right),\,\rho\in\mathbb{C}.
\]

The values $\tilde{f}\left(\frac{n\pi}{x}\right)$ differ from the
exact coefficients $f\left(\frac{n\pi}{x}\right)$ by local errors
$\varepsilon_{n}:=f\left(\frac{n\pi}{x}\right)-\tilde{f}\left(\frac{n\pi}{x}\right)$,
such that $\left|\varepsilon_{n}\right|\leq\varepsilon$, for $n\in\mathbb{Z}$,
and there exists a constant $M_{f}>0$, such that 
\begin{equation}
\left|f(\rho)\right|\leq\frac{M_{f}}{\left|\rho\right|}\,\,\left(\left|\rho\right|\geq1\right),\rho\in\mathbb{R}.\label{eq:AssumtionErrorAmplitude}
\end{equation}

Then for $0<\varepsilon\leq\min\left\{ x/\pi,\pi/x,1/\sqrt{e}\right\} $
the estimate for the amplitude error is valid
\[
\left|\mathcal{A}_{\varepsilon}[f](\rho)\right|\leq4\left(\sqrt{3}e+\sqrt{2}M_{f}\exp(1/4)\right)\varepsilon\log(1/\varepsilon)\exp(x\left|\ii\rho\right|),\,\rho\in\mathbb{C}.
\]

Very often the coefficients of the cardinal series are not available
in the exact form, so the amplitude error estimates allow us to estimate
the error caused by working with noisy data.
\end{rem}

\begin{prop}
Consider the approximation 
\[
\widetilde{\phi}\left(\rho,x\right)=-\rho xj_{1}(\rho x)+\sum_{n=0}^{\infty}\tilde{\phi}_{n}(x)\left(\sinc\left(\frac{\rho x}{\pi}-n\right)+\sinc\left(\frac{\rho x}{\pi}+n\right)\right)
\]

of the cardinal series representation (\ref{eq: PhiSeriesSinc-1}).

Let $\tilde{\phi_{n}}\left(x\right)$ represent values differing from
the exact coefficients $\phi_{n}(x)$ by local errors $\varepsilon_{n}:=\tilde{\phi_{n}}\left(x\right)-\phi_{n}\left(x\right)$,
$n\in\mathbb{N}\cup\left\{ 0\right\} $, which are uniformly bounded,
$\left|\varepsilon_{n}\right|\leq\varepsilon$, $n\in\mathbb{N}\cup\left\{ 0\right\} $. 

Then 
\[
\left|\sum_{n=0}^{\infty}\left(\phi_{n}(x)-\tilde{\phi_{n}}\left(x\right)\right)\left(\sinc\left(\frac{\rho x}{\pi}-n\right)+\sinc\left(\frac{\rho x}{\pi}+n\right)\right)\right|\leq4\left(\sqrt{3}e+\sqrt{2}M_{\phi}e^{\frac{1}{4}}\right)\varepsilon\log\left(\frac{1}{\varepsilon}\right)e^{Cx}
\]
for $\left|\ii\rho\right|\leq C$, $C\geq0$, $0<\varepsilon\leq\min\left\{ x/\pi,\pi/x,1/\sqrt{e}\right\} $,
where $M_{\phi}>0$ is a constant.
\end{prop}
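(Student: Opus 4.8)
The plan is to recognize the left-hand side as the classical amplitude error of the cardinal series of a single Paley--Wiener function and then to invoke the estimate recorded in Remark \ref{rem:The-error-analysis}. Recall from (\ref{eq:AuxPhiFun}) that $\hat{\phi}(\rho,x)=\phi(\rho,x)-\cos(\rho x)$ belongs to $PW_{x}^{2}$ and, by the integral representation (\ref{eq: TransOpPhi}), is an \emph{even} function of $\rho$. Hence its bilateral cardinal series (\ref{eq:SincSamplingTheorem}) folds into the symmetric $\sinc$ pairs appearing in (\ref{eq: PhiSeriesSinc-1}), the sample at $\rho=n\pi/x$ producing the coefficient $\phi_{n}(x)$ for $n\geq1$ and the sample at $\rho=0$ producing $\phi_{0}(x)=\phi(0,x)/2$. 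Subtracting the exact representation (\ref{eq: PhiSeriesSinc-1}) from $\widetilde{\phi}$, the common term $-\rho x j_{1}(\rho x)$ cancels, so that $\phi(\rho,x)-\widetilde{\phi}(\rho,x)$ is exactly the folded amplitude error
\begin{equation*}
\mathcal{A}_{\varepsilon}[\hat{\phi}](\rho)=\sum_{n=0}^{\infty}\bigl(\phi_{n}(x)-\tilde{\phi}_{n}(x)\bigr)\left(\sinc\left(\frac{\rho x}{\pi}-n\right)+\sinc\left(\frac{\rho x}{\pi}+n\right)\right),
\end{equation*}
whose bilateral sample errors coincide with $-\varepsilon_{n}$ and are therefore bounded by $\varepsilon$ in modulus, as required in Remark \ref{rem:The-error-analysis}. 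The single $n=0$ sample carries a harmless factor $2$ (its bilateral error is $2\varepsilon_{0}$); I would dispose of it by estimating that term alone, $2|\varepsilon_{0}|\,|\sinc(\rho x/\pi)|=O(\varepsilon e^{Cx})$ on the strip $|\ii\rho|\leq C$, which is of lower order than $\varepsilon\log(1/\varepsilon)$ and is absorbed into the constant.

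The one ingredient that is not pure bookkeeping, and which I expect to be the main point, is verifying that $\hat{\phi}(\cdot,x)$ satisfies the decay hypothesis (\ref{eq:AssumtionErrorAmplitude}), i.e. $|\hat{\phi}(\rho,x)|\leq M_{\phi}/|\rho|$ for real $|\rho|\geq1$. I would obtain this by integrating by parts once in (\ref{eq: TransOpPhi}): since the kernel $\boldsymbol{\mathcal{G}}(x,\cdot)\in W_{2}^{1}[0,x]$ by Theorem \ref{thm: FourierRepTransKernel} (hence continuous, with $\boldsymbol{\mathcal{G}}_{2}(x,\cdot)\in\mathcal{L}_{2}[0,x]$), the boundary contribution reduces to $\boldsymbol{\mathcal{G}}(x,x)\sin(\rho x)/\rho$ (the endpoint $t=0$ vanishing), and the remaining integral $\tfrac1\rho\int_{0}^{x}\boldsymbol{\mathcal{G}}_{2}(x,t)\sin(\rho t)\,dt$ is $O(1/|\rho|)$ uniformly in real $\rho$ by the Cauchy--Bunyakovsky--Schwarz inequality. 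This furnishes the explicit admissible value $M_{\phi}=|\boldsymbol{\mathcal{G}}(x,x)|+\sqrt{x}\,\|\boldsymbol{\mathcal{G}}_{2}(x,\cdot)\|_{\mathcal{L}_{2}[0,x]}$, the constant named in the statement.

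With the identification $\phi-\widetilde{\phi}=\mathcal{A}_{\varepsilon}[\hat{\phi}]$, the decay bound (\ref{eq:AssumtionErrorAmplitude}), and the hypotheses $|\varepsilon_{n}|\leq\varepsilon$ and $0<\varepsilon\leq\min\{x/\pi,\pi/x,1/\sqrt{e}\}$ all in place, I would simply quote the amplitude-error estimate of Remark \ref{rem:The-error-analysis} applied to $f=\hat{\phi}$, obtaining the bound $4\bigl(\sqrt{3}e+\sqrt{2}M_{\phi}e^{1/4}\bigr)\varepsilon\log(1/\varepsilon)\exp(x|\ii\rho|)$. Restricting to the strip $|\ii\rho|\leq C$ then replaces $\exp(x|\ii\rho|)$ by $e^{Cx}$, which is precisely the claimed inequality. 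The only care needed is to confirm that the evenness-induced factor at $n=0$ and the smallness condition on $\varepsilon$ are compatible with the hypotheses of the cited estimate; since both are, I anticipate no genuine obstruction beyond the decay verification.
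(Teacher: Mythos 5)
Your proposal is correct, and its skeleton coincides with the paper's proof: both reduce the claim to the amplitude-error estimate of Remark \ref{rem:The-error-analysis} applied to $f=\phi(\rho,x)-\cos(\rho x)\in PW_{x}^{2}$, so the only substantive hypothesis to verify is the decay condition (\ref{eq:AssumtionErrorAmplitude}). Where you differ is in how that condition is verified: the paper simply quotes the asymptotics $\phi(\rho,x)=\cos(\rho x)+O\bigl(|\rho|^{-1}\exp(|\ii\rho|x)\bigr)$ from \cite[Lemma 1.1.2]{FreYurko} and takes $M_{\phi}=K\exp(Cx)$, whereas you derive the needed bound on the real line self-containedly, by one integration by parts in (\ref{eq: TransOpPhi}) using $\boldsymbol{\mathcal{G}}(x,\cdot)\in W_{2}^{1}[0,x]$ from Theorem \ref{thm: FourierRepTransKernel}. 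This buys an explicit admissible constant $M_{\phi}=|\boldsymbol{\mathcal{G}}(x,x)|+\sqrt{x}\,\|\boldsymbol{\mathcal{G}}_{2}(x,\cdot)\|_{\mathcal{L}_{2}[0,x]}$ which, unlike the paper's, does not depend on the strip half-width $C$ --- indeed only real $\rho$ enter (\ref{eq:AssumtionErrorAmplitude}), so the factor $\exp(Cx)$ in the paper's $M_{\phi}$ is not actually needed. You are also more careful than the paper about the evenness-induced folding: since $\phi_{0}(x)=\phi(0,x)/2$, the bilateral sample error at $n=0$ is $2\varepsilon_{0}$ rather than $\varepsilon_{0}$, so the folded series is not literally the amplitude error $\mathcal{A}_{\varepsilon}$ of Remark \ref{rem:The-error-analysis}; your separate estimate of that single term, combined with $|\sinc(\rho x/\pi)|\leq e^{Cx}$ on the strip and $\varepsilon\log(1/\varepsilon)\geq\varepsilon/2$ (from $\varepsilon\leq1/\sqrt{e}$), disposes of it at the cost of slightly inflating the numerical constant --- harmless here, since $M_{\phi}$ is unspecified in the statement and can absorb the surplus, and the paper's own proof silently skips this point altogether.
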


\begin{proof}
Since (see \cite[Lemma 1.1.2]{FreYurko})
\[
\phi(\rho,x)=\cos(\rho x)+O\left(\frac{1}{\left|\rho\right|}\exp\left(\left|\ii\rho\right|x\right)\right),\,\left|\rho\right|\rightarrow\infty,
\]

\textrm{the function $\phi(\rho,x)-\cos(\rho x)$} fulfills condition
(\ref{eq:AssumtionErrorAmplitude}) with $M_{\phi}=K\exp(Cx)$, where
$K$ is a positive constant. Thus, from Remark \ref{rem:The-error-analysis}
the result is obtained.
\end{proof}
An analogous result can be obtained for the cardinal series representation
of the solution $S(\rho,x)$ since the proof of the amplitude estimate
in \cite[Section 3.5]{Butzer} can be reproduced in this case as well. 

Additionally to the previous notations we denote the partial sums
of (\ref{eq:NewCardinalPsi}), (\ref{eq: PsiSeriesSinc}) and (\ref{eq: DPsiSeries})
by

\begin{align}
\psi_{N}\left(\rho,x\right) & =\cos(\rho(L-x))+\frac{\sin(\rho(L-x))}{\rho}(H+\omega_{L}(x))-\frac{L-x}{\rho}j_{1}(\rho(L-x))q_{L}^{H}(x)\nonumber \\
 & -\frac{1}{\rho^{2}}\sum_{n=1}^{N}\tilde{\Psi}_{n}(x)\left(\sinc\left(\frac{\rho(x-L)}{\pi}+n\right)+\sinc\left(\frac{\rho(x-L)}{\pi}-n\right)\right),\label{eq:PArtialPPSI}
\end{align}

\begin{equation}
\Psi_{N}\left(\rho,x\right)=\rho(x-L)j_{1}(\rho(L-x))+\sum_{n=0}^{N-1}\psi_{n}(x)\left(\sinc\left(\frac{\rho(x-L)}{\pi}-n\right)+\sinc\left(\frac{\rho(x-L)}{\pi}+n\right)\right)\label{eq:PartialPsiOldSinc}
\end{equation}

and 

\begin{align}
\mathring{\psi}_{N}\left(\rho,x\right) & =\rho\sin(\rho(L-x))+(H+\omega_{L}(x))\rho(L-x)j_{1}(\rho(L-x))\label{eq:PartialDpsi}\\
 & +\sum_{n=0}^{N-1}\mathring{\psi}_{n}(x)\left(\sinc\left(\frac{\rho(x-L)}{\pi}-n\right)+\sinc\left(\frac{\rho(x-L)}{\pi}+n\right)\right),\nonumber 
\end{align}
respectively.

\section{Spectrum completion\label{sec:Spectrum-completion}}

The approach to the spectrum completion problem is analogous to that
in \cite{VkSpectralCompl2023}, where Neumann series of Bessel functions
representations for solutions of (\ref{eq:PrincipalEq}) were used.
Given a finite set of the eigenvalues of a Sturm-Liouville problem,
the algorithm consists in computing several first coefficients of
a series representation for a characteristic function of the problem,
thus approximating the characteristic function and consequently locating
its zeros for computing further eigenvalues.

Namely, let us assume that several eigenvalues $\left\{ \xi_{k}^{2}\right\} _{k=1}^{N_{1}}$
of a Sturm-Liouville problem
\begin{equation}
\begin{array}{c}
-y''(x)+q(x)y(x)=\rho^{2}y(x)\\
y'(0)-hy(0)=0,\,y'(L)+Hy(L)=0
\end{array}\label{eq:SLProblemCompletion}
\end{equation}
are given, where $q$ is the unknown complex valued potential and
$h$ and $H$ are unknown complex constants. The solution $\psi(\rho,x)$
fulfills the second boundary condition of (\ref{eq:SLProblemCompletion}).
Thus, the characteristic function of problem (\ref{eq:SLProblemCompletion})
is defined by the first boundary condition 
\begin{equation}
\Delta(\lambda):=h\psi(\rho,0)-\psi^{'}(\rho,0),\label{eq:DeltaCharfun}
\end{equation}
where $\lambda=\rho^{2}$, see \cite[p. 6]{FreYurko}.

The spectrum completion can be performed following the algorithm.

1. Approximate the characteristic function $\Delta(\lambda)$ of problem
(\ref{eq:SLProblemCompletion}) by using the truncated cardinal series
representations (\ref{eq:PArtialPPSI}) (corresponding to potentials
$q(x)\in W_{2}^{1}\left[0,L\right]$) and (\ref{eq:PartialDpsi}),
i.e.,
\begin{align*}
\Delta_{N}(\lambda)=h\psi_{N}\left(\rho,0\right)-\mathring{\psi}_{N}\left(\rho,0\right) & =h\cos(\rho L)+\frac{h\sin(\rho L)}{\rho}\omega_{H}-\frac{hL}{\rho}j_{1}(\rho L)q_{L}^{H}(0)\\
 & -\frac{h}{\rho^{2}}\sum_{n=1}^{N}\tilde{\Psi}_{n}(0)\left(\sinc\left(\frac{\rho L}{\pi}+n\right)+\sinc\left(\frac{\rho L}{\pi}-n\right)\right)-\rho\sin(\rho L)\\
 & -\rho Lj_{1}(\rho L)\omega_{H}-\sum_{n=0}^{N-1}\mathring{\psi}_{n}(0)\left(\sinc\left(\frac{\rho L}{\pi}-n\right)+\sinc\left(\frac{\rho L}{\pi}+n\right)\right),
\end{align*}
where 
\begin{equation}
\omega:=\omega_{L}(0)=\frac{1}{2}\int_{0}^{L}q(s)ds,\,\,\omega_{H}(x):=\omega_{L}(x)+H\,\,\text{and}\,\,\,\,\omega_{H}=\omega_{H}(0)=\omega+H.\label{eq:omega}
\end{equation}

2. Consider a system of $N_{1}$ linear algebraic equations, which
is obtained from the $N_{1}$ equalities $\Delta_{N}(\lambda_{k})=0$
($\lambda_{k}=\xi_{k}^{2}$), 
\begin{align*}
 & h\cos(\xi_{k}L)+\frac{h\sin(\xi_{k}L)}{\xi_{k}}\omega_{H}-\frac{hL}{\xi_{k}}j_{1}(\xi_{k}L)q_{L}^{H}(0)\\
 & -\frac{h}{\xi_{k}^{2}}\sum_{n=1}^{N}\tilde{\Psi}_{n}(0)\left(\sinc\left(\frac{\xi_{k}L}{\pi}+n\right)+\sinc\left(\frac{\xi_{k}L}{\pi}-n\right)\right)-\xi_{k}Lj_{1}(\xi_{k}L)\omega_{H}\\
 & -\sum_{n=0}^{N-1}\mathring{\psi}_{n}(0)\left(\sinc\left(\frac{\xi_{k}L}{\pi}-n\right)+\sinc\left(\frac{\xi_{k}L}{\pi}+n\right)\right)=\xi_{k}\sin(\xi_{k}L),
\end{align*}
with $k=1,\ldots,N_{1},\,2N+4\leq N_{1}.$

Solving this system of equations, we obtain the coefficients 
\begin{equation}
\omega_{H},\,h,\,q_{L}^{H}(0),\,\left\{ \tilde{\Psi}_{n}(0)\right\} _{n=1}^{N}\,\,\,\text{and}\,\,\left\{ \mathring{\psi}_{n}(0)\right\} _{n=0}^{N-1}.\label{eq:CoeffsSpectralComNN}
\end{equation}

3. Consider the approximate characteristic function $h\psi_{N}\left(\rho,0\right)-\mathring{\psi}_{N}\left(\rho,0\right)$,
for any $\rho\in\mathbb{C}$. This is possible because the coefficients
(\ref{eq:CoeffsSpectralComNN}) found in the previous step are independent
of $\rho$.

4. Finally, compute the zeros of $h\psi_{N}\left(\rho,0\right)-\mathring{\psi}_{N}\left(\rho,0\right)$.
Their squares approximate the eigenvalues of (\ref{eq:SLProblemCompletion}).

Notice that for completing the spectrum no knowledge of $q$, $h$
and $H$ is required.
\begin{rem}
If it is known that $q\in\mathcal{L}_{2}[0,L]$ and not necessarily
$q\in W_{2}^{1}[0,L]$, the function $\psi(\rho,L)$ is approximated
by (\ref{eq:PartialPsiOldSinc}) (instead of (\ref{eq:PArtialPPSI}))
in step 1 of the spectrum completion algorithm. In this case, the
system of $N_{1}$ linear algebraic equations in step 2 is derived
from the $N_{1}$ equalities: 
\begin{align*}
-h\xi_{k}Lj_{1}(\xi_{k}L)-\xi_{k}Lj_{1}(\xi_{k}L)\omega_{H}+\sum_{n=0}^{N-1}\sigma_{n}\left(\sinc\left(\frac{\xi_{k}L}{\pi}-n\right)+\sinc\left(\frac{\xi_{k}L}{\pi}+n\right)\right)=\xi_{k}\sin(\xi_{k}L),
\end{align*}
where $k=1,\ldots,N_{1},\,N+2\leq N_{1}$ and $\sigma_{n}:=h\psi_{n}(0)-\mathring{\psi}_{n}(0)$.
Solving this system of equations is sufficient to reconstruct $\Delta_{N}(\lambda)$
and proceed to step 4 of the spectrum completion algorithm. 
\end{rem}

\begin{rem}
\begin{singlespace}
\label{rem:In-the-caseDD}In the case of a Sturm-Liouville problem
involving a Dirichlet condition at $x=0$ or at $x=L$ the procedure
is analogous. For example, suppose there is given a finite set $\left\{ \lambda_{k}=\rho_{k}^{2}\right\} _{k=1}^{N_{1}}$
of eigenvalues of the Dirichlet-Dirichlet Sturm-Liouville problem,
i.e., equation (\ref{eq:PrincipalEq}) subject to the boundary conditions
\[
y(0)=y(L)=0.
\]
The square roots of the eigenvalues coincide with zeros of the function
$S(\rho,L)$. The linear system (analogous to that mentioned in step
2) of the spectrum completion algorithm can be constructed in four
different forms by considering one of the four truncated series representations
(\ref{eq:NsbfS-1}), (\ref{eq:PartialNewSNSBF}), (\ref{eq: TruncatedS})
and (\ref{eq:TruncatedNewCardinalS}). In particular, by considering
$\mathtt{s}_{N}\left(\rho_{k},L\right)=0$ the system is constructed
from the equations{\footnotesize{}
\begin{align}
 & \frac{\cos(\rho_{k}L)}{\rho_{k}^{2}}\omega-\frac{\sin(\rho_{k}L)}{\rho_{k}^{3}}q^{+}(L)+\frac{1}{2\rho_{k}^{3}\sin(\rho_{k}L)}\sum_{n=0}^{N-1}\mathfrak{s}_{n}(L)\left(\sinc\left(\frac{2\rho_{k}L}{\pi}-2n-1)\right)+\sinc\left(\frac{2\rho_{k}L}{\pi}+2n+1\right)\right)\nonumber \\
 & =\frac{\sin(\rho_{k}L)}{\rho_{k}},\,k=1,\ldots,N_{1}.\label{eq:SystemDD}
\end{align}
}{\footnotesize\par}
\end{singlespace}

Notice that the parameter $\omega$ is one of the components of the
vector solution.

Similarly, by considering $S_{N}(\rho_{k},L)=0$ the system is obtained
from the equations{\small{}
\[
\frac{1}{2\rho_{k}\sin(\rho_{k}L)}\sum_{n=0}^{N-1}s_{n}(L)\left(\sinc\left(\frac{2\rho_{k}L}{\pi}-2n-1\right)+\sinc\left(\frac{2\rho_{k}L}{\pi}+2n+1\right)\right)=-\frac{\sin(\rho_{k}L)}{\rho_{k}},\,k=1,\ldots,N_{1}.
\]
}The other two options $\mathtt{\hat{s}}_{N}\left(\rho_{k},L\right)=0$
or $\hat{S}_{N}(\rho_{k},L)=0$ are analogous (NSBF representations
are considered).

Finally, once the characteristic function $\mathtt{s}_{N}\left(\rho,L\right)$
(or $S_{N}(\rho,L)$, or $\mathtt{\hat{s}}_{N}\left(\rho,L\right)$,
or $\hat{S}_{N}(\rho,L)$) is obtained, compute its zeros. Their squares
provide approximations to the eigenvalues of the Dirichlet-Dirichlet
problem.
\end{rem}

Let us discuss the computation of the parameter $\omega$. For this,
recall that $\left\{ \rho_{k}\right\} _{k=1}^{\infty}$, the square
roots of the eigenvalues, satisfy the asymptotic relation 
\begin{equation}
\rho_{k}=\frac{k\pi}{L}+\frac{\omega}{k\pi}+\frac{\varsigma_{_{k}}}{k},\label{eq:Asymtptic DD}
\end{equation}
 where $\left\{ \varsigma_{_{k}}\right\} \in l_{2}$. 

A frequently used technique for computing $\omega$ consists in minimizing
the $l_{2}$-norm of the sequence 
\begin{equation}
c_{k}=\pi k\left(\rho_{k}-\frac{k\pi}{L}\right)-\omega,\,k=1,2,\ldots,\label{eq:DDAymo2}
\end{equation}
constructed from (\ref{eq:Asymtptic DD}), see, e.g., \cite{Ignatiev},
\cite{Rundell}. In Section \ref{subsec:Dirichlet-Dirichlet-spectrum-com},
thus obtained approximation of $\omega$ is compared with the value
obtained as a part of the solution of system (\ref{eq:SystemDD}). 
\begin{thm}
For any $\varepsilon>0$ there exists such $N\in\mathbb{N}$ that
all zeros of the function $S(\rho,L)$ are approximated by corresponding
zeros of the function $S_{N}(\rho,L)$ with errors uniformly bounded
by $\varepsilon$, and $S_{N}(\rho,L)$ has no other zeros. 
\end{thm}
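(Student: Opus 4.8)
The plan is to run a Rouché/Hurwitz comparison, the decisive point being that the remainder estimate in (\ref{eq:EstimateS2}) decays like $1/\left|\rho\right|$, exactly matching the size of $S(\rho,L)$ near its (eventually simple) zeros, so that a single $N$ controls infinitely many zeros at once. First I would assemble the structural facts. Both $S(\cdot,L)$ and the partial sum $S_{N}(\cdot,L)$ of (\ref{eq: TruncatedS}) are even entire functions of $\rho$. The zeros of $S(\cdot,L)$ are the numbers $\pm\rho_{k}$, square roots of the Dirichlet eigenvalues; by (\ref{eq:Asymtptic DD}) they satisfy $\rho_{k}=\mu_{k}+O(1/k)$ with $\mu_{k}:=k\pi/L$, hence they lie in a strip $\left|\ii\rho\right|\le h_{0}$, accumulate only at infinity, and are simple for all large $k$, with $\left|S'(\rho_{k},L)\right|\gtrsim 1/\left|\rho_{k}\right|$. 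Using $\sinc\!\left(\tfrac{2\rho L}{\pi}\mp(2n+1)\right)=-\sin(2\rho L)/(2\rho L\mp(2n+1)\pi)$ and $\sin(2\rho L)=2\sin(\rho L)\cos(\rho L)$, the factor $\sin(\rho L)$ in (\ref{eq: TruncatedS}) cancels and one gets the closed form
\begin{equation}
S_{N}(\rho,L)=\frac{\sin(\rho L)}{\rho}-4L\cos(\rho L)\sum_{n=0}^{N-1}\frac{s_{n}(L)}{(2\rho L)^{2}-\left((2n+1)\pi\right)^{2}},\label{eq:closedSN}
\end{equation}
which exhibits $S_{N}(\cdot,L)$ as entire and, since $\sum_{n}\left|s_{n}(L)\right|=:B<\infty$ by the absolute convergence in Proposition \ref{thm:The-transmutations-kernelExpan}, shows the correction to be of size $O(e^{\left|\ii\rho\right|L}/\left|\rho\right|)$ with a constant independent of $N$.

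The heart of the argument is a uniform lower bound for $S$ off its zeros matched against the remainder. Fix $d\in(0,\pi/(2L))$; on the circles $\partial D(\mu_{k},d)$ one has $\left|\sin(\rho L)\right|\ge c_{0}>0$ while $\int_{0}^{L}\boldsymbol{\mathcal{S}}(L,t)\sin(\rho t)\,dt\to0$ as $\left|\mathrm{Re}\,\rho\right|\to\infty$ in the strip (Riemann--Lebesgue), so $\left|S(\rho,L)\right|\ge c/\left|\rho\right|$ there for all large $k$; the same bound holds on $\partial D_{k}$, $D_{k}:=\{\left|\rho-\rho_{k}\right|\le\varepsilon\}$, through $\left|S(\rho,L)\right|\ge\tfrac12\left|S'(\rho_{k},L)\right|\varepsilon\gtrsim\varepsilon/\left|\rho_{k}\right|$. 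On either circle the estimate (\ref{eq:EstimateS2}) gives $\left|S(\rho,L)-S_{N}(\rho,L)\right|\le\varepsilon_{N}(L)\sqrt{\sinh(2CL)/C}\,/\left|\rho\right|$ for a fixed $C>h_{0}$, so the ratio $\left|S-S_{N}\right|/\left|S\right|$ is bounded by a constant multiple of $\varepsilon_{N}(L)$ or of $\varepsilon_{N}(L)/\varepsilon$, \emph{independently of $k$}. Choosing $N$ so that $\varepsilon_{N}(L)$ makes both ratios less than $1$, Rouché's theorem yields exactly one zero of $S_{N}(\cdot,L)$ in each $D(\mu_{k},d)$, lying in $D_{k}$ and so within $\varepsilon$ of $\rho_{k}$; covering the strip by these disks (equivalently by the rectangles between the lines $\mathrm{Re}\,\rho=(k\pm\tfrac12)\pi/L$) shows $S_{N}(\cdot,L)$ has no further zeros in the strip beyond a fixed radius.

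It remains to exclude spurious zeros in the two remaining regions. On the fixed compact set carrying the finitely many low zeros $\rho_{k}$, I would invoke the classical Hurwitz theorem, using that $S_{N}(\cdot,L)\to S(\cdot,L)$ uniformly on compacta by Theorem \ref{thm:Let-.-The OldSIn}; for $N$ large this matches the corresponding zeros to within $\varepsilon$ and produces no others. For $\left|\ii\rho\right|\ge C$, where (\ref{eq:EstimateS2}) is unavailable, I would argue directly from (\ref{eq:closedSN}): there each factor $2\rho L\mp(2n+1)\pi$ has modulus at least $2\left|\ii\rho\right|L$, so their product is of order at least $CL^{2}\left|\rho\right|$, and with the $N$-independent bound $B$ the correction is at most of order $B\left|\cos(\rho L)\right|/(CL\left|\rho\right|)$; a zero of $S_{N}(\cdot,L)$ would then force $\left|\tan(\rho L)\right|\lesssim B/(CL)$, contradicting $\left|\tan(\rho L)\right|\ge\tanh(CL)$ once $C$ is fixed large enough that $\tanh(CL)>\sqrt{2}\,B/(CL)$, and this holds for \emph{every} $N$. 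Taking $N$ to be the largest of the finitely many thresholds from the three regimes gives the asserted $N$.

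I expect the main obstacle to be precisely these two uniformities. The uniformity over the infinitely many receding zeros in the strip is what forces the use of the $1/\left|\rho\right|$ factor in (\ref{eq:EstimateS2}): without it the relative error on $\partial D_{k}$ would grow with $k$ and no single $N$ could work. The uniformity over $N$ in the region of large imaginary part is the more delicate point, and it is what makes the closed form (\ref{eq:closedSN}) together with the $N$-independent summability $\sum_{n}\left|s_{n}(L)\right|=B$ indispensable; the cancellation of $\tan(\rho L)\to\pm i$ against a right-hand side that vanishes as $\left|\rho\right|\to\infty$ is ultimately what confines all zeros of $S_{N}(\cdot,L)$ to a fixed horizontal strip.
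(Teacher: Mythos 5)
Your proposal is correct and takes essentially the same route as the paper, whose entire proof is a one-line citation applying Rouch\'e's theorem to the pair $\rho S(\rho,L)$, $\rho S_{N}(\rho,L)$ as in Proposition 7.1 of \cite{Kravchenko2016}: your hand-matching of the $1/\left|\rho\right|$ decay of the remainder estimate against the $1/\left|\rho\right|$ size of $S$ near its zeros is exactly the role played there by the normalizing factor $\rho$. The additional material you supply beyond the citation --- the closed form for $S_{N}(\rho,L)$, the $N$-independent bound $\sum_{n}\left|s_{n}(L)\right|<\infty$, and the estimate $\left|\tan(\rho L)\right|\geq\tanh(CL)$ excluding zeros with $\left|\ii\rho\right|\geq C$ uniformly in $N$ --- is a correct, self-contained way of securing the ``no other zeros'' clause that the paper leaves implicit in the reference.
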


\begin{proof}
The proof is analogous to the proof of Proposition 7.1 in \cite{Kravchenko2016}
where the Rouché theorem is applied to the functions $\rho S(\rho,L)$
and $\rho S_{N}(\rho,L)$.
\end{proof}
\begin{rem}
An analogous statement can be proved for the characteristic function
$\Delta(\lambda)$ (see (\ref{eq:DeltaCharfun})) or the characteristic
function associated to the problem of equation (\ref{eq:PrincipalEq})
with boundary conditions$\begin{array}{c}
y(0)=0,\end{array}$ $y'(L)+Hy(L)=0$. Properties of these characteristic functions can
be found in \cite[§1.3]{Marchenko} (see also \cite[§1.1]{FreYurko}).
\end{rem}

We present illustrative examples performed in Matlab2021a. In the
final step of the spectrum completion algorithm, when locating complex
zeros of the approximate characteristic function, the algorithm of
the argument principle theorem described in \cite{Vk14} is applied.
When $q(x)$, $h$ and $H$ are real, to locate real zeros, the 'fnzeros'
routine in Matlab is used, applying it to a 6th order spline interpolation
of the approximate characteristic function. 

\subsection{Dirichlet-Dirichlet spectrum completion\label{subsec:Dirichlet-Dirichlet-spectrum-com}}

The Dirichlet-Dirichlet spectrum completion is performed following
Remark \ref{rem:In-the-caseDD}. The first two examples of this section
deal with the spectrum completion by using the four truncated series
representations (\ref{eq:NsbfS-1}), (\ref{eq:PartialNewSNSBF}),
(\ref{eq: TruncatedS}) and (\ref{eq:TruncatedNewCardinalS}) in the
case of real-valued potentials. In the last example, a complex Dirichlet-Dirichlet
spectrum is considered, and the spectrum completion algorithm is performed
by using the truncated cardinal series representations (\ref{eq: TruncatedS})
and (\ref{eq:TruncatedNewCardinalS}).

\begin{example} \label{ExamplePAine}Consider\textbf{ }the potential
$q(x)=\exp(x)$, $x\in[0,\pi]$. We compute the input data (a set
of the first eigenvalues) in a high precision arithmetics in Wolfram
Mathematica12 in order to study different situations under the presence
of a controlled noise. In Figures \ref{fig:ExpSquareNoise} and \ref{fig:DDUndeterminedNoiseC1Pot-1},
we present the absolute errors of the Dirichlet-Dirichlet spectrum
completed by using the four different approximations of $S(\rho,\pi)$,
see Remark \ref{rem:In-the-caseDD}. 

Three cases are considered, corresponding to 15, 10 and 5 eigenvalues
given as input data, presented by three columns in Figure \ref{fig:ExpSquareNoise}.
In each case, the considered linear system of algebraic equations
is square. In the first column, series representations $\mathtt{\hat{s}}_{13}(\rho,\pi)$,
$\hat{S}_{15}(\rho,\pi)$, $\mathtt{s}_{13}(\rho,\pi)$ and $S_{15}(\rho,\pi)$,
each with 15 unknown coefficients, are used. In the second column,
series representations $\mathtt{\hat{s}}_{8}(\rho,\pi)$, $\hat{S}_{10}(\rho,\pi)$,
$\mathtt{s}_{8}(\rho,\pi)$ and $S_{10}(\rho,\pi)$, each with 10
unknown coefficients are considered. Finally, in the third column,
series representations $\mathtt{\hat{s}}_{3}(\rho,\pi)$, $\hat{S}_{5}(\rho,\pi)$,
$\mathtt{s}_{3}(\rho,\pi)$ and $S_{5}(\rho,\pi)$, each with 5 unknown
coefficients, are considered. Furthermore, in the first row of Figure
\ref{fig:ExpSquareNoise}, we present the absolute errors of the computed
zeros of the characteristic function $S(\rho,\pi)$ when exact input
data are used. The other two rows of the grid present the absolute
errors dealing with randomly noisy data, as indicated at the top of
each figure. In all the cases we complete the sequence of the first
300 eigenvalues.

For convenience we use the following notation. $\text{NSBF}_{1}$
indicates the use of expression (\ref{eq:NsbfS-1}), $\text{NSBF}_{2}$
refers to the use of (\ref{eq:PartialNewSNSBF}), $\text{Cardinal}_{1}$
indicates the use of (\ref{eq: TruncatedS}) and $\text{Cardinal}_{2}$
refers to the use of (\ref{eq:TruncatedNewCardinalS}). 
\begin{flushleft}
\begin{figure}[H]
\begin{raggedright}
\includegraphics[width=0.9\paperwidth,height=0.9\textwidth]{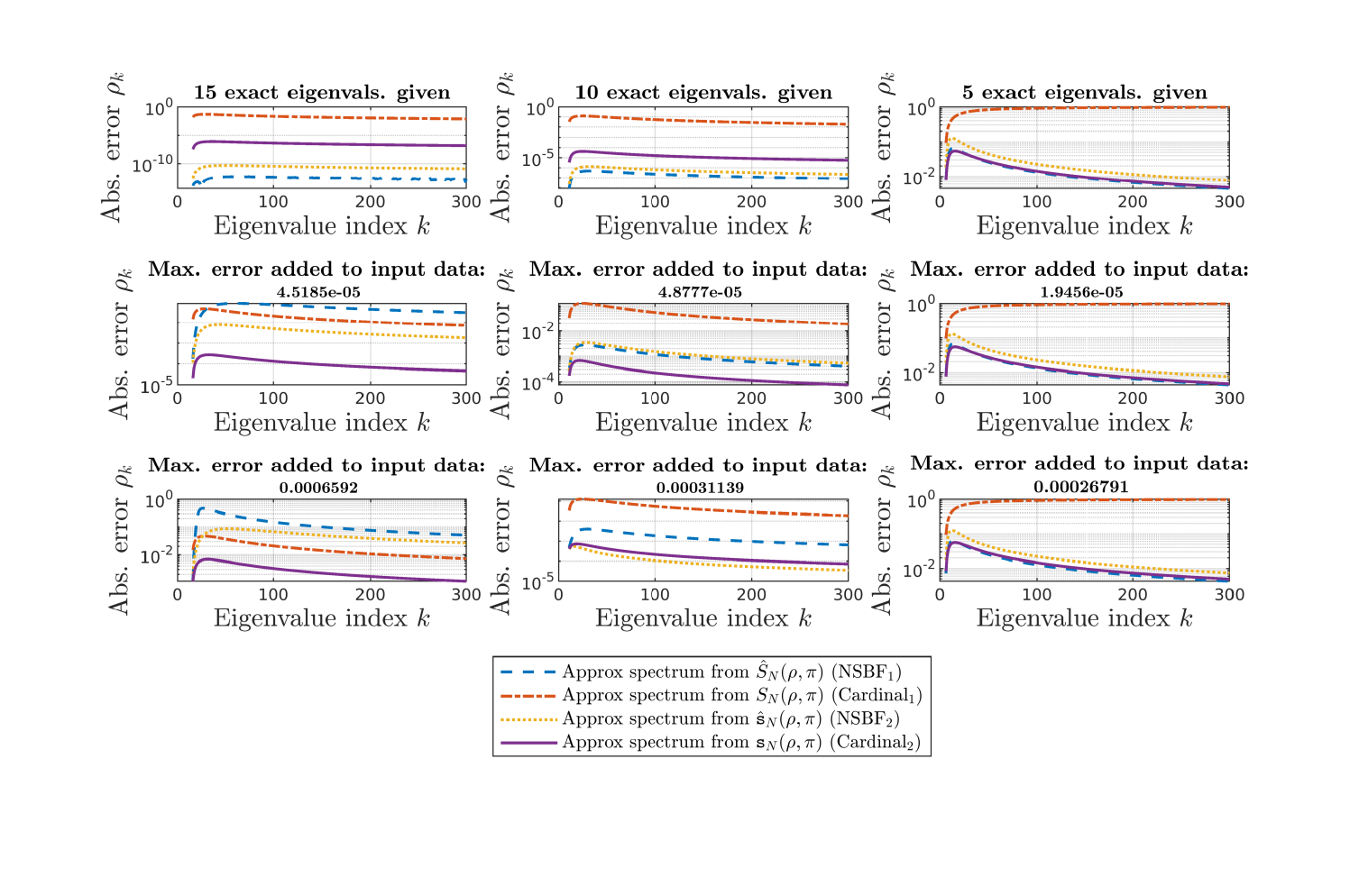}
\par\end{raggedright}
\caption{\label{fig:ExpSquareNoise}Example \ref{ExamplePAine}. Absolute errors
of the Dirichlet-Dirichlet spectrum completed from 15, 10 and 5 eigenvalues
(columns in the figure). In the top row the exact eigenvalues were
used as the input data, while in the other two rows the spectrum is
completed from randomly noisy data. The maximum noise level is indicated
on the top of each figure. }
\end{figure}
\par\end{flushleft}

The results in the first row of Figure \ref{fig:ExpSquareNoise} indicate
a better accuracy attained by both NSBF representations and by the
cardinal series representation (\ref{eq:TruncatedNewCardinalS}).
When the input data are not exact, the cardinal series representation
(\ref{eq:TruncatedNewCardinalS}) turns to be a preferable technique
(see the second and the third rows of figure \ref{fig:ExpSquareNoise}).

In Figure \ref{fig:DDUndeterminedNoiseC1Pot-1}, three numerical tests
are reported. Here in all tests 15 eigenvalues are given, however,
we deal with the partial sums of the series representations containing
less coefficients, thus solving overdetermined systems of linear algebraic
equations. The results presented in the first column correspond to
the partial sums $\mathtt{\hat{s}}_{11}(\rho,\pi)$, $\hat{S}_{13}(\rho,\pi)$,
$\mathtt{s}_{11}(\rho,\pi)$ and $S_{13}(\rho,\pi)$. For each of
these partial sums 13 unknown coefficients are computed from a $15\times13$
system. Analogously, the results of the second column correspond to
the partial sums $\mathtt{\hat{s}}_{8}(\rho,\pi)$, $\hat{S}_{10}(\rho,\pi)$,
$\mathtt{s}_{8}(\rho,\pi)$ and $S_{10}(\rho,\pi)$, while the results
of the third column correspond to $\mathtt{\hat{s}}_{4}(\rho,\pi)$,
$\hat{S}_{6}(\rho,\pi)$, $\mathtt{s}_{4}(\rho,\pi)$ and $S_{6}(\rho,\pi)$.
In the first row, the absolute errors presented correspond to exact
input data. The other two rows of the grid present the absolute errors
corresponding to noisy data as indicated at the top of each figure. 
\begin{flushleft}
\begin{figure}[H]
\begin{raggedright}
\includegraphics[width=0.9\paperwidth,height=0.9\textwidth]{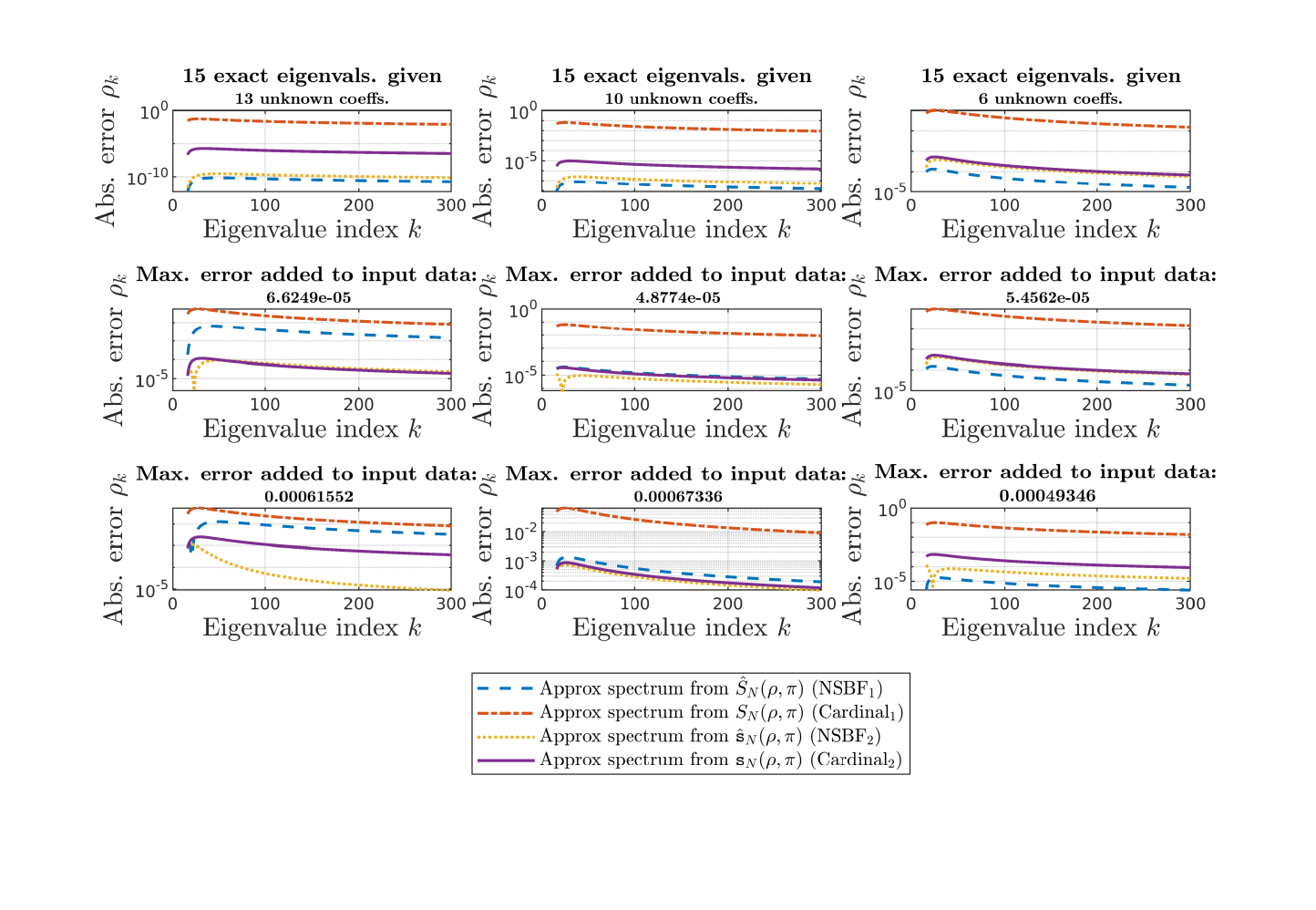}
\par\end{raggedright}
\caption{\label{fig:DDUndeterminedNoiseC1Pot-1}Example \ref{ExamplePAine}.
Absolute errors of the Dirichlet-Dirichlet spectrum completed from
given 15 eigenvalues and considering overdetermined systems in the
spectrum completion procedure. The maximum noise level is indicated
on the top of each figure. }
\end{figure}
\par\end{flushleft}

Figure \ref{fig:DDUndeterminedNoiseC1Pot-1} shows that both NSBF
representations and the cardinal series representation (\ref{eq:TruncatedNewCardinalS})
provide comparable accuracy when dealing with fewer coefficients and
noisy data.

Given 15 exact eigenvalues, the value of $\omega$ was obtained by
using $\mathtt{s}_{13}(\rho,\pi)$ and $\hat{\mathtt{s}}_{13}(\rho,\pi)$
as shown in Table \ref{tab:OmegaTableC1-1}.

\begin{table}[H]
\begin{centering}
\begin{tabular}{|c|c|c|c|}
\hline 
\multirow{2}{*}{} & Minimizing $l_{2}$-norm of & NSBF series & Cardinal series\tabularnewline
 & the sequence$\left\{ c_{k}\right\} _{k=1}^{15}$, see (\ref{eq:DDAymo2}) & $\hat{\mathtt{s}}_{13}(\rho,\pi)$ & $\mathtt{s}_{13}(\rho,\pi)$\tabularnewline
\hline 
\hline 
Abs. error $\omega$ & $8.34\times10^{-2}$ & $1.18\times10^{-8}$ & $1.50\times10^{-4}$\tabularnewline
\hline 
Rel. error $\omega$ & $7.53\times10^{-3}$ & $1.06\times10^{-9}$ & $1.35\times10^{-5}$\tabularnewline
\hline 
\end{tabular}
\par\end{centering}
\caption{\label{tab:OmegaTableC1-1}Absolute and relative errors of $\omega$
from Example \ref{ExamplePAine}.}
\end{table}

Note that the spectrum completion algorithm demonstrates a remarkable
accuracy in approximating the parameter $\omega$ as compared to the
minimization of the $l_{2}$-norm of $\left\{ c_{k}\right\} _{k=1}^{15}$,
see (\ref{eq:DDAymo2}).

\end{example}

\begin{example} \label{ExampleC1Pot}Consider\textbf{ }the potential
$q(x)=\left|x-1\right|(x-1)$, $x\in[0,\pi]$. The ``exact'' eigenvalues
were computed by the Matslise package \cite{Ledoux}. In Figures \ref{fig:DDSquareNoiseC1Pot}
and \ref{fig:DDUndeterminedNoiseC1Pot}, the absolute error of the
Dirichlet-Dirichlet spectrum completed by using the four different
approximations of $S(\rho,\pi)$ is presented. 

Three cases are considered, corresponding to 15, 10 and 5 eigenvalues
given as input data, presented by three columns in Figure \ref{fig:DDSquareNoiseC1Pot}.
In each case, the considered linear system of algebraic equations
is square. In the first column, series representations $\mathtt{\hat{s}}_{13}(\rho,\pi)$,
$\hat{S}_{15}(\rho,\pi)$, $\mathtt{s}_{13}(\rho,\pi)$ and $S_{15}(\rho,\pi)$,
each with 15 unknown coefficients, are used. In the second column,
series representations $\mathtt{\hat{s}}_{8}(\rho,\pi)$, $\hat{S}_{10}(\rho,\pi)$,
$\mathtt{s}_{8}(\rho,\pi)$ and $S_{10}(\rho,\pi)$, each with 10
unknown coefficients, are considered. Finally, in the third column,
series representations $\mathtt{\hat{s}}_{3}(\rho,\pi)$, $\hat{S}_{5}(\rho,\pi)$,
$\mathtt{s}_{3}(\rho,\pi)$ and $S_{5}(\rho,\pi)$, each with 5 unknown
coefficients, are considered. Furthermore, in the first row of Figure
\ref{fig:DDSquareNoiseC1Pot}, we present the absolute errors of the
computed zeros of the characteristic function $S(\rho,\pi)$ when
exact input data are used. The other two rows of the grid present
the absolute errors dealing with randomly noisy data, as indicated
at the top of each figure. In all the cases we complete the sequence
of the first 300 eigenvalues.
\begin{flushleft}
\begin{figure}[H]
\begin{raggedright}
\includegraphics[width=0.9\paperwidth,height=0.9\textwidth]{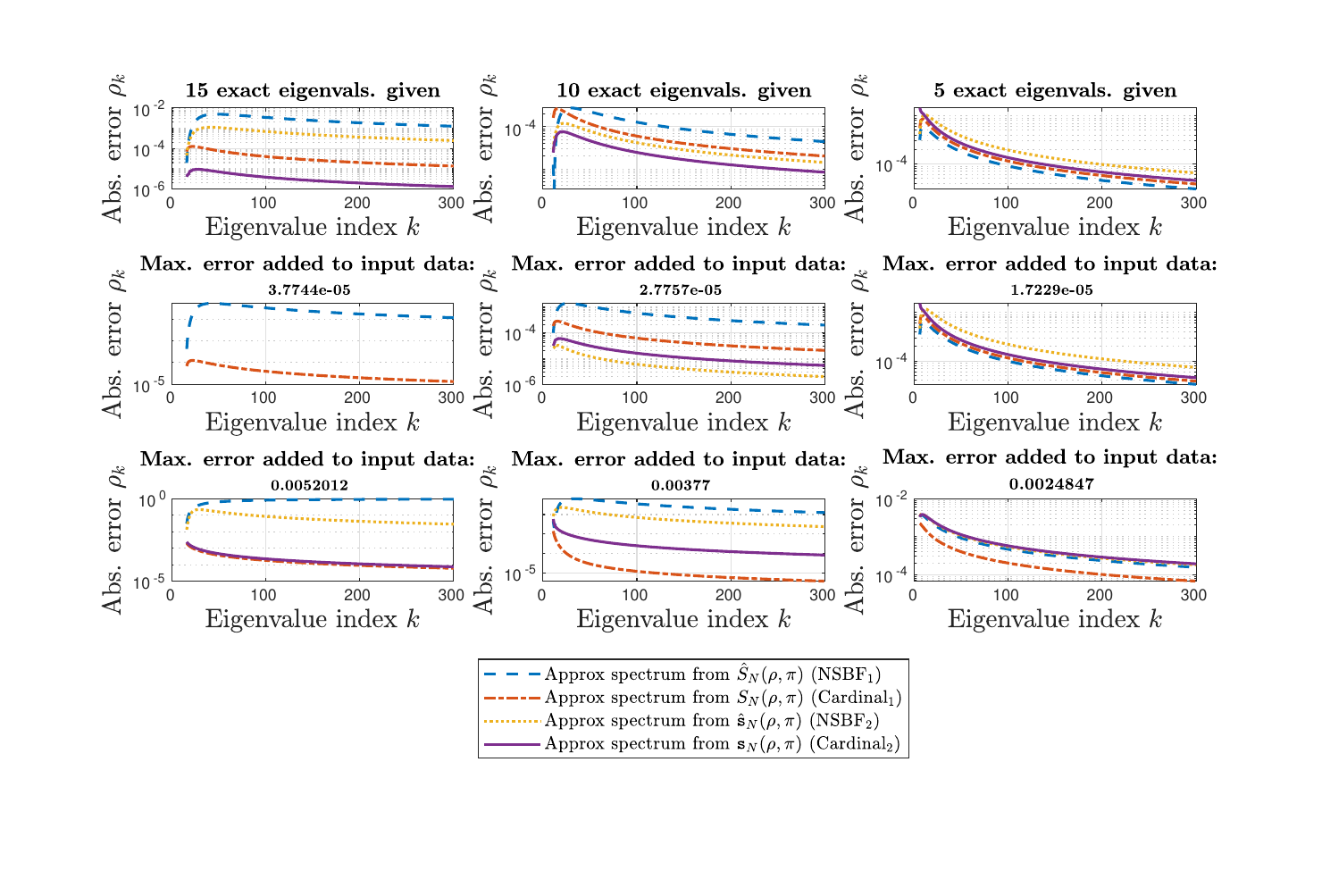}
\par\end{raggedright}
\caption{\label{fig:DDSquareNoiseC1Pot}Example \ref{ExampleC1Pot}. Absolute
errors of the Dirichlet-Dirichlet spectrum completed from given 15,
10 and 5 eigenvalues (columns in the figure). In the top row the exact
eigenvalues were used as the input data, while in the other two rows
the spectrum is completed from randomly noisy data. The maximum noise
level is indicated on the top of each figure.}
\end{figure}
\par\end{flushleft}

Figure \ref{fig:DDSquareNoiseC1Pot} shows that both the NSBF and
cardinal series representations provide comparable accuracy when dealing
with fewer given eigenvalues. However, from given 15 eigenvalues (first
column), better results are attained by using the cardinal series
representations. Note that in the second row of the first column,
the results corresponding to NSBF$_{2}$ and Cardinal$_{2}$ are omitted
because, in both cases, in the sequence of the roots of the eigenvalues
completed, one term was not found by the argument principle theorem
algorithm.

In Figure \ref{fig:DDUndeterminedNoiseC1Pot}, three numerical tests
are reported. Here in all tests 15 eigenvalues are given, however,
we deal with the partial sums of the series representations containing
less coefficients, thus solving overdetermined systems of linear algebraic
equations. The results presented in the first column correspond to
the partial sums $\mathtt{\hat{s}}_{11}(\rho,\pi)$, $\hat{S}_{13}(\rho,\pi)$,
$\mathtt{s}_{11}(\rho,\pi)$ and $S_{13}(\rho,\pi)$. For each of
these partial sums 13 unknown coefficients are computed from a $15\times13$
system. Analogously, the results of the second column correspond to
the partial sums $\mathtt{\hat{s}}_{8}(\rho,\pi)$, $\hat{S}_{10}(\rho,\pi)$,
$\mathtt{s}_{8}(\rho,\pi)$ and $S_{10}(\rho,\pi)$, while the results
of the third column correspond to $\mathtt{\hat{s}}_{4}(\rho,\pi)$,
$\hat{S}_{6}(\rho,\pi)$, $\mathtt{s}_{4}(\rho,\pi)$ and $S_{6}(\rho,\pi)$.
In the first row, the absolute errors presented correspond to exact
input data. The other two rows of the grid present the absolute errors
corresponding to noisy data as indicated at the top of each figure. 
\begin{flushleft}
\begin{figure}[H]
\begin{raggedright}
\includegraphics[width=0.9\paperwidth,height=0.9\textwidth]{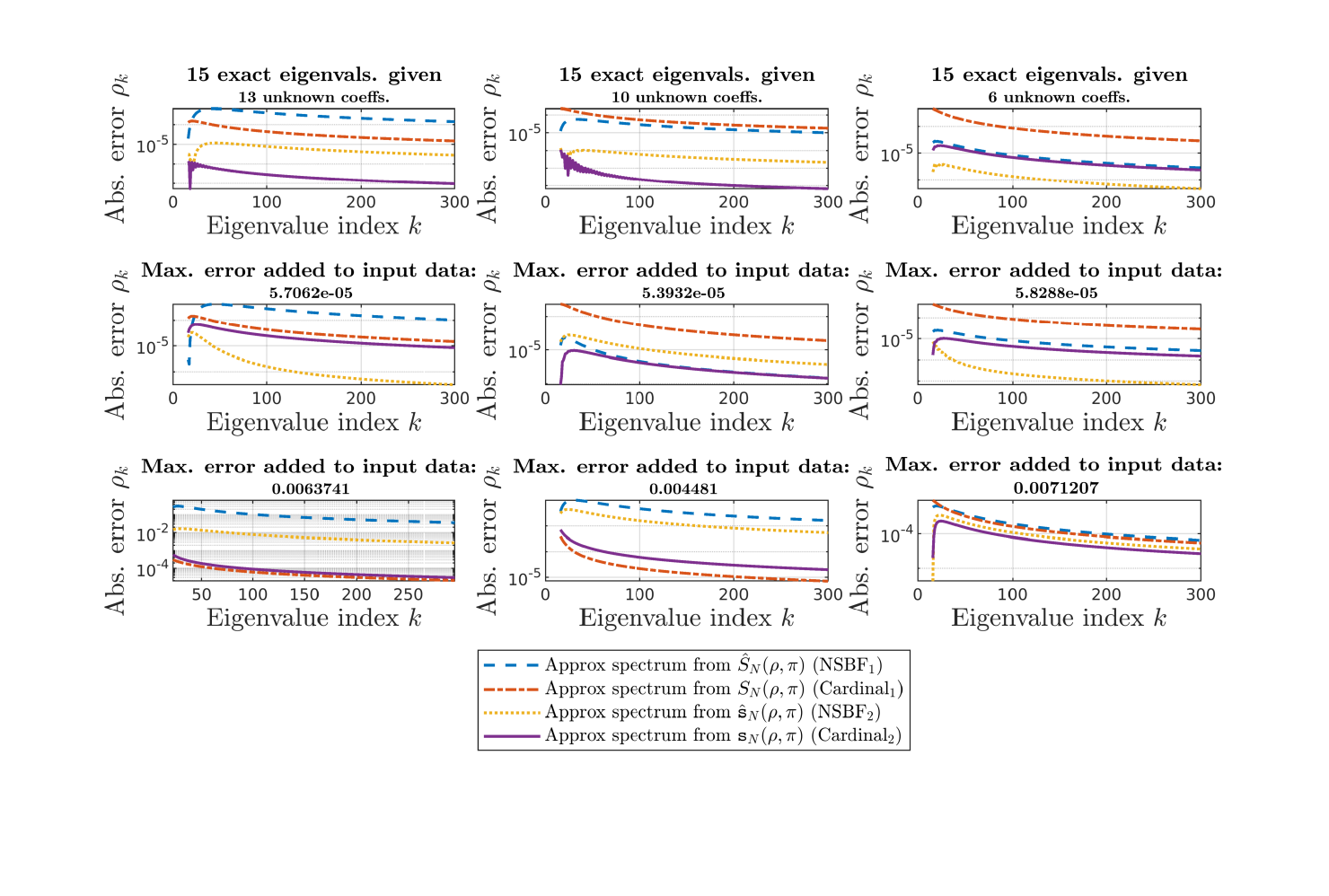}
\par\end{raggedright}
\caption{\label{fig:DDUndeterminedNoiseC1Pot}Example \ref{ExampleC1Pot}.
Absolute errors of the Dirichlet-Dirichlet spectrum completed given
15 eigenvalues and considering overdetermined systems in the spectrum
completion procedure. The maximum noise level is indicated on the
top of each figure. }
\end{figure}
\par\end{flushleft}

Figure \ref{fig:DDUndeterminedNoiseC1Pot} shows that both NSBF representations
and the cardinal series representations provide comparable accuracy
when dealing with fewer coefficients.

Given 15 exact eigenvalues, the value of $\omega$ was obtained by
using $\mathtt{s}_{13}(\rho,\pi)$ and $\hat{\mathtt{s}}_{13}(\rho,\pi)$
as shown in Table \ref{tab:OmegaTableC1}.

\begin{table}[H]
\begin{centering}
\begin{tabular}{|c|c|c|c|}
\hline 
\multirow{2}{*}{} & Minimizing $l_{2}$-norm of & NSBF series & Cardinal series\tabularnewline
 & the sequence$\left\{ c_{k}\right\} _{k=1}^{15}$ & $\hat{\mathtt{s}}_{13}(\rho,\pi)$ & $\mathtt{s}_{13}(\rho,\pi)$ \tabularnewline
\hline 
\hline 
Abs. error $\omega$ & $1.36\times10^{-3}$  & $2.67\times10^{-3}$  & $9.19\times10^{-5}$\tabularnewline
\hline 
Rel. error $\omega$ & $9.27\times10^{-4}$ & $1.82\times10^{-3}$  & $6.25\times10^{-5}$\tabularnewline
\hline 
\end{tabular}
\par\end{centering}
\caption{\label{tab:OmegaTableC1}Absolute and relative errors of $\omega$
from Example \ref{ExampleC1Pot}.}
\end{table}

The spectrum completion algorithm using NSBF representations attained
accuracy comparable with that of minimizing the $l_{2}$-norm of $\left\{ c_{k}\right\} _{k=1}^{15}$,
see (\ref{eq:DDAymo2}). However, in contrast to the previous example,
the use of cardinal representations attained a higher accuracy in
approximating $\omega$.

\end{example}
\begin{rem}
It is worth mentioning that for the numerical implementation of the
spectrum completion algorithm in the case of a Sturm-Liouville problem
with a real-valued potential, the input data comprise a set of several
first eigenvalues. Another choice of this set, e.g., excluding some
of the first eigenvalues, may affect the accuracy of the result. In
examples provided below, which involve sets of complex $\rho$ values,
the set $\left\{ \rho_{k}\right\} $ was arranged in ascending order
according to their real parts and again, another choice of the set
of the input data different from the lower eigenvalues ordered in
this sense, may considerably affect the accuracy of the spectrum completion
results. 
\end{rem}

\begin{example} \label{Example-2.-Razavy} Consider\textbf{ }the
potential {\small{}
\[
q(x)=\frac{\xi^{2}}{8}\cosh\left(4\left(x-\frac{\pi}{2}\right)\right)-\xi\cosh\left(2\left(x-\frac{\pi}{2}\right)\right)-\frac{\xi^{2}}{8}+i\left(-2\beta\text{\ensuremath{\cos}}\left(2\left(x-\frac{\pi}{2}\right)\right)+\beta^{2}\sin^{2}\left(2\left(x-\frac{\pi}{2}\right)\right)\right),
\]
}\textrm{where $x\in[0,\pi]$.} The real part of $q(x)$ is a Razavy
potential, see \cite{Razavy} (also \cite{Donj2}), while the imaginary
part is a Coffey--Evans potential, see \cite{Child}. Consider the
parameters $\xi=1$ and $\beta=2.5$. Then $95$ exact roots of the
eigenvalues of the Dirichlet-Dirichlet spectral problem were computed,
i.e., $\left\{ \rho_{k}\right\} _{k=1}^{95}$, applying the argument
principle algorithm to the truncated NSBF representation $\hat{S}_{70}(\rho,\pi)$,
see Table \ref{tab:EigenvalsRazavy}.

{\small{}}
\begin{table}[H]
\begin{centering}
{\small{}}%
\begin{tabular}{|c|c|c|c|}
\hline 
{\small{}$k$} & {\small{}$\rho_{k}$} & {\small{}$k$} & {\small{}$\rho_{k}$}\tabularnewline
\hline 
\hline 
{\small{}1} & {\small{}$3.12617926311455+0.572266443980943i$} & {\small{}16} & {\small{}$18.041095650974853+0.085877686528993i$}\tabularnewline
\hline 
{\small{}2} & {\small{}$4.088547511197311+0.389615377318618i$} & {\small{}17} & {\small{}$19.039065664759466+0.081442639937180i$}\tabularnewline
\hline 
{\small{}3} & {\small{}$5.084080945656062+0.302775401034403i$} & {\small{}18} & {\small{}$20.037218815706300+0.077440297693835i$}\tabularnewline
\hline 
{\small{}4} & {\small{}$6.085195457473403+0.250574546648726i$} & {\small{}19} & {\small{}$21.035532722327530+0.073810537153015i$}\tabularnewline
\hline 
{\small{}5} & {\small{}$7.082466993436702+0.214529415008048i$} & {\small{}20} & {\small{}$22.033988214663513+0.070503864631646i$}\tabularnewline
\hline 
{\small{}6} & {\small{}$8.077896364511830+0.188191144658540i$} & {\small{}21} & {\small{}$23.032568858538518+0.067479162553785i$}\tabularnewline
\hline 
{\small{}7} & {\small{}$9.072794574641620+0.167946782507620i$} & {\small{}22} & {\small{}$24.031260534710880+0.064701988966698i$}\tabularnewline
\hline 
{\small{}8} & {\small{}$10.067783652205982+0.151772831343148i$} & {\small{}23} & {\small{}$25.030051077609170+0.062143276372243i$}\tabularnewline
\hline 
{\small{}9} & {\small{}$11.063116297188756+0.138491798721120i$} & {\small{}24} & {\small{}$26.028929970169504+0.059778323971854i$}\tabularnewline
\hline 
{\small{}10} & {\small{}$12.058872589066544+0.127363162077596i$} & {\small{}25} & {\small{}$27.027888088230295+0.057586008621454i$}\tabularnewline
\hline 
{\small{}11} & {\small{}$13.055054839227326+0.117890683181366i$} & {\small{}26} & {\small{}$28.026917487279782+0.055548160905673i$}\tabularnewline
\hline 
{\small{}12} & {\small{}$14.051633124058206+0.109725199358610i$} & {\small{}27} & {\small{}$29.026011224748824+0.053649067311458i$}\tabularnewline
\hline 
{\small{}13} & {\small{}$15.048566694201764+0.102611765605969i$} & {\small{}28} & {\small{}$30.025163211839477+0.051875069711426i$}\tabularnewline
\hline 
{\small{}14} & {\small{}$16.045813628934138+0.096358838899685i$} & {\small{}29} & {\small{}$31.024368089770650+0.050214240659765i$}\tabularnewline
\hline 
{\small{}15} & {\small{}$17.043334874534832+0.090819220540106i$} & {\small{}30} & {\small{}$32.023621126166745+0.048656118273636i$}\tabularnewline
\hline 
\end{tabular}{\small\par}
\par\end{centering}
{\small{}\caption{\label{tab:EigenvalsRazavy}Thirty square roots $\left\{ \rho_{k}\right\} _{k=1}^{30}$
of eigenvalues of the Dirichlet-Dirichlet Sturm-Liouville problem
in Example \ref{Example-2.-Razavy}.}
}{\small\par}
\end{table}
{\small\par}

In Figure \ref{fig:RzavyCompletion}, we present the absolute and
relative errors of the values $\rho_{k}$ computed until $k=95$ (a
sequence of the first 95 eigenvalues were completed), from given $5$,
$15$, $25$ and $35$ eigenvalues. The same number of the unknowns
in the linear system for the coefficients of the approximate characteristic
function is considered. Table \ref{tab:Abs.-and-rel.Razavy} presents
the maximum absolute and relative errors of the sequence of the $\rho_{k}$
completed, by using both cardinal series representations (\ref{eq: TruncatedS})
and (\ref{eq:TruncatedNewCardinalS}).

\begin{figure}[H]
\begin{minipage}[c][1\totalheight][t]{0.45\textwidth}%
\begin{center}
\includegraphics[scale=0.55]{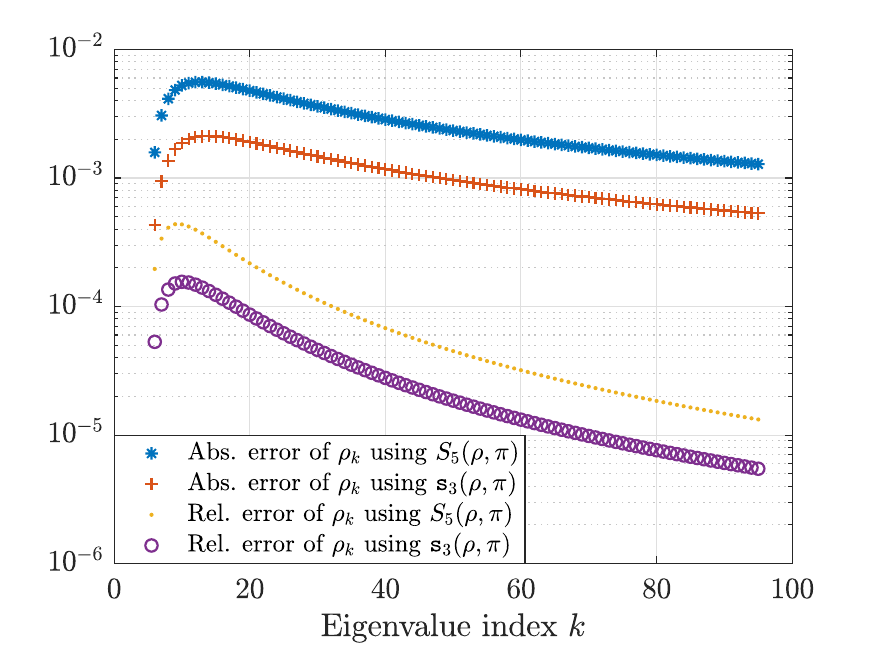}
\par\end{center}
\begin{center}
\textbf{(A)}
\par\end{center}%
\end{minipage}%
\begin{minipage}[c][1\totalheight][t]{0.45\textwidth}%
\begin{center}
\includegraphics[scale=0.55]{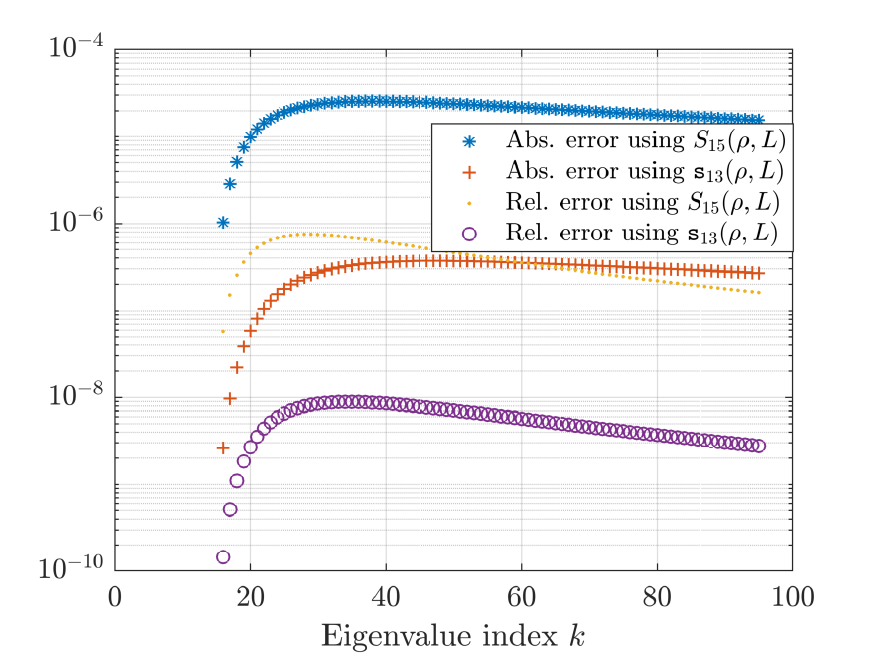}
\par\end{center}
\begin{center}
\textbf{(B)}
\par\end{center}%
\end{minipage}

\begin{minipage}[c][1\totalheight][t]{0.45\textwidth}%
\begin{center}
\includegraphics[scale=0.55]{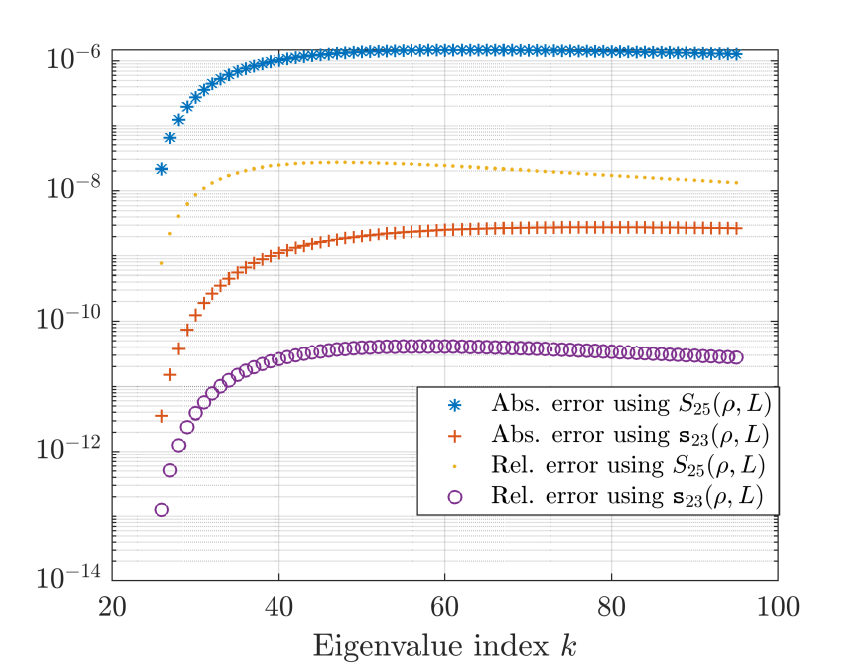}
\par\end{center}
\begin{center}
\textbf{(C)}
\par\end{center}%
\end{minipage}%
\begin{minipage}[c][1\totalheight][t]{0.45\textwidth}%
\begin{center}
\includegraphics[scale=0.55]{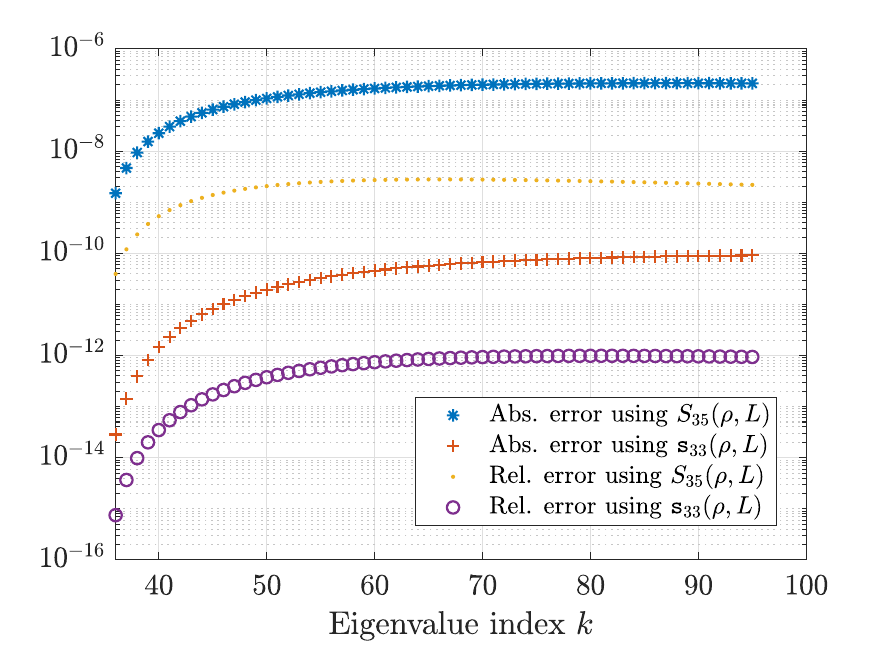}
\par\end{center}
\begin{center}
\textbf{(D)}
\par\end{center}%
\end{minipage}

\caption{\label{fig:RzavyCompletion}Example \ref{Example-2.-Razavy}. Abs.
and rel. errors of $\rho_{k}$ computed from 5 eigenvalues (in (A)),
from 15 eigenvalues (in (B)), from 25 eigenvalues (in (C)) and from
35 eigenvalues (in (D)).}
\end{figure}

\begin{table}[H]
\begin{centering}
{\footnotesize{}}%
\begin{tabular}{|c|c|c|c|c|}
\hline 
\multirow{2}{*}{} & {\footnotesize{}Given 5 eigenvals. and} & {\footnotesize{}Given 5 eigenvals. and} & {\footnotesize{}Given 15 eigenvals. and} & {\footnotesize{}Given 15 eigenvals. and}\tabularnewline
 & {\footnotesize{}computing $S_{5}(\rho,L)$} & {\footnotesize{}computing $\mathtt{s}_{3}(\rho,L)$} & {\footnotesize{}computing $S_{15}(\rho,L)$} & {\footnotesize{}computing $\mathtt{s}_{13}(\rho,L)$}\tabularnewline
\hline 
\hline 
{\footnotesize{}Abs. error } & {\footnotesize{}$5.57\times10^{-3}$} & {\footnotesize{}$2.12\times10^{-3}$} & {\footnotesize{}$2.58\times10^{-5}$} & {\footnotesize{}$3.72\times10^{-7}$}\tabularnewline
\hline 
{\footnotesize{}Rel. error } & {\footnotesize{}$4.37\times10^{-4}$} & {\footnotesize{}$1.56\times10^{-4}$} & {\footnotesize{}$7.5\times10^{-7}$} & {\footnotesize{}$8.95\times10^{-9}$}\tabularnewline
\hline 
\end{tabular}{\footnotesize\par}
\par\end{centering}
\begin{centering}
{\footnotesize{}}%
\begin{tabular}{|c|c|c|c|c|}
\hline 
\multirow{2}{*}{} & {\footnotesize{}Given 25 eigenvals. and} & {\footnotesize{}Given 25 eigenvals. and} & {\footnotesize{}Given 35 eigenvals. and} & {\footnotesize{}Given 35 eigenvals. and}\tabularnewline
 & {\footnotesize{}computing $S_{25}(\rho,L)$} & {\footnotesize{}computing $\mathtt{s}_{23}(\rho,L)$} & {\footnotesize{}computing $S_{35}(\rho,L)$} & {\footnotesize{}computing $\mathtt{s}_{33}(\rho,L)$}\tabularnewline
\hline 
\hline 
{\footnotesize{}Abs. error } & {\footnotesize{}$1.49\times10^{-6}$} & {\footnotesize{}$2.8\times10^{-9}$} & {\footnotesize{}$2.14\times10^{-7}$} & {\footnotesize{}$9.04\times10^{-11}$}\tabularnewline
\hline 
{\footnotesize{}Rel. error } & {\footnotesize{}$2.68\times10^{-8}$} & {\footnotesize{}$4.12\times10^{-11}$} & {\footnotesize{}$2.8\times10^{-9}$} & {\footnotesize{}$9.87\times10^{-13}$}\tabularnewline
\hline 
\end{tabular}{\footnotesize\par}
\par\end{centering}
{\footnotesize{}\caption{\label{tab:Abs.-and-rel.Razavy}Example \ref{Example-2.-Razavy}.
Maximum abs. and rel. errors of the values of $\rho_{k}$ computed
from 5, 15, 25 and 35 eigenvalues.}
}{\footnotesize\par}
\end{table}

The results show a remarkable convergence of the method even when
dealing with complex $\rho_{k}$. Moreover, satisfactory results were
attained with a reduced number of the eigenvalues given. Indeed, even
from 5 given eigenvalues the approximate $\omega$ was found with
an absolute error of $1.63\times10^{-1}$ and a relative error of
$2.99\times10^{-2}$.

\end{example} 

\subsection{Neumann-Dirichlet spectrum\label{subsec:Neumann-Dirichlet-spectrum}}

\begin{example}\textbf{ }\label{ExampleRotor}Consider\textbf{ }the
potential $q(x)=-b\cos(x)$, $x\in[0,2\pi]$ with the parameter $b=5$,
see \cite{Chen}. 

Paper \cite{Chen} is devoted to the computation of energy levels
and analytical wave functions for the Dirichlet-Dirichlet problem
of the rigid planar rotor in an electric field model. The authors
transform the Schrödinger equation into a confluent Heun differential
equation and subsequently compute the eigenvalues using the Maple
package. The accuracy of the algorithm relies on the numerical calculation
of confluent Heun functions and their first-order derivatives by Maple.
This procedure becomes complicated when a large number of eigenvalues
must be computed. Thus, the spectrum completion algorithm can be useful
to obtain more such data accurately. 

Consider the Neumann-Dirichlet problem 
\begin{equation}
\begin{array}{c}
-y''(x)-5\cos(x)y(x)=\rho^{2}y(x),\,0<x<2\pi\\
y'(0)=0,\,y(2\pi)=0.
\end{array}\label{eq:boundarycondExampl}
\end{equation}

Since the solution $\phi(\rho,x)$ satisfies the first initial condition
in (\ref{eq:boundarycondExampl}), the corresponding characteristic
function of the problem is approximated by $\varphi_{N}(\rho,2\pi)$
(or $\Phi_{N}(\rho,2\pi)$), see Section \ref{subsec:Error-estimates}.
Thus, given the eigenvalues $\left\{ \nu_{k}^{2}\right\} _{k=1}^{M}$
(computed by Matslise), the $M$-equations $\varphi_{N}(\nu_{k},2\pi)=0$
(or $\Phi_{N}(\nu_{k},2\pi)=0$) form the system of linear algebraic
equations for the coefficients of the approximate characteristic function.
In this example, we complete the sequence of the first 300 eigenvalues.

In Figures \ref{fig:Abs.-and-rel.DNCompeltion} and \ref{fig:DNCopletion2}
the absolute errors of $\nu_{k}$ computed from given 5 and 15 eigenvalues
are reported, respectively. In Table \ref{tab:Abs.-and-rel.Razavy-1}
the maximum values of the absolute and relative errors of the sequence
$\nu_{k}$ completed are presented.

\begin{figure}[H]
\begin{minipage}[c][1\totalheight][t]{0.45\textwidth}%
\begin{center}
\includegraphics[scale=0.6]{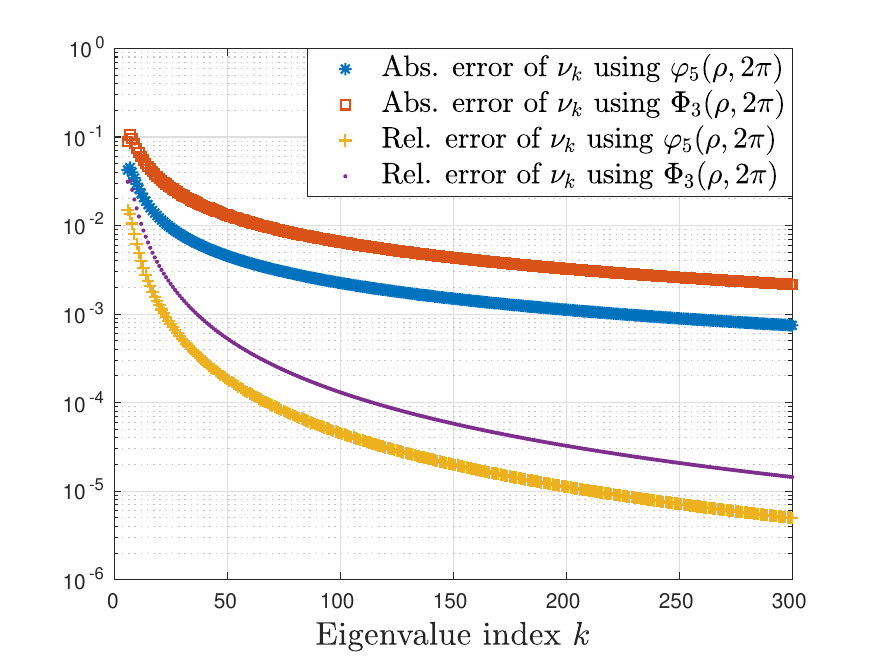}
\par\end{center}
\caption{\label{fig:Abs.-and-rel.DNCompeltion}Example \ref{ExampleRotor}.
Abs. and rel. errors of $\nu_{k}$, $k=6,\ldots,300$ computed from
5 eigenvalues.}
\end{minipage}\hspace{0.5cm}%
\begin{minipage}[c][1\totalheight][t]{0.45\textwidth}%
\begin{center}
\includegraphics[scale=0.6]{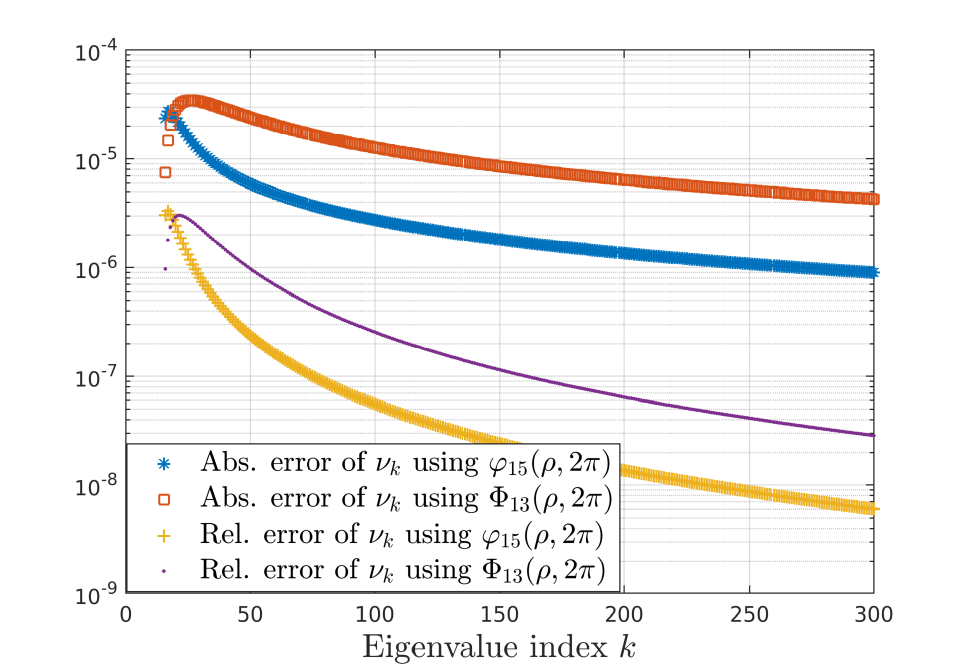}
\par\end{center}
\caption{\label{fig:DNCopletion2}Example \ref{ExampleRotor}. Abs. and rel.
errors of $\nu_{k}$, $k=16,\ldots,300$ computed from 15 eigenvalues.}
\end{minipage}
\end{figure}

\begin{table}[H]
\begin{centering}
{\footnotesize{}}%
\begin{tabular}{|c|c|c|c|c|}
\hline 
\multirow{2}{*}{} & {\footnotesize{}Given 5 eigenvals. and} & {\footnotesize{}Given 5 eigenvals. and} & {\footnotesize{}Given 15 eigenvals. and} & {\footnotesize{}Given 15 eigenvals. and}\tabularnewline
 & {\footnotesize{}computing $\varphi_{5}(\rho,2\pi)$} & {\footnotesize{}computing $\mathtt{\Phi}_{3}(\rho,2\pi)$} & {\footnotesize{}computing $\varphi_{15}(\rho,2\pi)$} & {\footnotesize{}computing $\mathtt{\Phi}_{13}(\rho,2\pi)$}\tabularnewline
\hline 
\hline 
{\footnotesize{}Abs. error } & {\footnotesize{}$4.5\times10^{-2}$} & {\footnotesize{}$1.05\times10^{-1}$} & {\footnotesize{}$2.76\times10^{-5}$} & {\footnotesize{}$3.49\times10^{-5}$}\tabularnewline
\hline 
{\footnotesize{}Rel. error } & {\footnotesize{}$1.37\times10^{-2}$} & {\footnotesize{}$3.56\times10^{-2}$} & {\footnotesize{}$7.5\times10^{-6}$} & {\footnotesize{}$5.64\times10^{-6}$}\tabularnewline
\hline 
\end{tabular}{\footnotesize\par}
\par\end{centering}
{\footnotesize{}\caption{\label{tab:Abs.-and-rel.Razavy-1}Example \ref{ExampleRotor}. Maximum
abs. and rel. errors of the values of $\nu_{k}$ computed from 5 and
15 eigenvalues.}
}{\footnotesize\par}
\end{table}

Additionally, the parameter $\omega$ was computed from given 15 eigenvalues
with an absolute error of $4.05\times10^{-3}$. 

Thus, the results of the spectrum completion are satisfactory even
when the Sturm-Liouville problem is considered on a larger interval
in $x$, and a limited number of the eigenvalues is provided. The
fast convergence of the method is noticeable.

\end{example} 

\subsection{Other spectra}

\begin{example}\label{-CompletiionH}Consider a Sturm-Liouville problem
with a Mathieu potential, see \cite{Mathieu}

\begin{equation}
\begin{array}{c}
-y''(x)+is\cos(2x)y(x)=\rho^{2}y(x),\,0<x<\pi,\\
y(0)=0,\,y'(\pi)+Hy(\pi)=0,
\end{array}\label{eq:SecondSLP-1}
\end{equation}

with $H$ assumed to be unknown. This potential with the parameter
$s$ being a real constant has been studied in numerous papers (see,
e.g., \cite{Brimacombe}, \cite[p. 44]{Flugge}, \cite{Mulholland }
and \cite{Ziener}). 

The solution $\psi(\rho,x)$ fulfills the second boundary condition
of (\ref{eq:SecondSLP-1}). Thus, the characteristic function of the
problem (\ref{eq:SecondSLP-1}) is defined by 
\begin{equation}
\Delta^{\circ}(\lambda):=\psi(\rho,0).\label{eq:DelltaCharfun2}
\end{equation}
Notice also that $\Delta^{\circ}(\lambda)=S(\rho,\pi)+HS'(\rho,\pi)$. 

Let $s=2$. The input data (a finite set of the eigenvalues $\mu_{k}^{2}$
of (\ref{eq:SecondSLP-1})) is computed with the aid of the NSBF representation
(\ref{eq:NsbfS}) and its derivative (see \cite{Vk2017NSBF}). The
spectrum completion is performed by using $\psi_{N}(\rho,0)$ in the
algorithm, see (\ref{eq:PArtialPPSI}). Let $H=i$, given 5 eigenvalues,
65 more were computed with a maximum absolute error of $4.32\times10^{-4}$,
see Figure \ref{fig:Abs.-and-rel.PsiCompeltionFrom5}. Additionally,
from 10 eigenvalues, 60 more were computed with a maximum absolute
error of $1.69\times10^{-5}$, see Figure \ref{fig:Abs.-and-rel.PsiComplFrom10}.
In the later case, the parameter $\omega_{H}$ (see (\ref{eq:omega}))
was obtained with an absolute error of $1.01\times10^{-3}$.

\begin{figure}[H]
\centering{}%
\begin{minipage}[c][1\totalheight][t]{0.45\textwidth}%
\begin{center}
\includegraphics[scale=0.5]{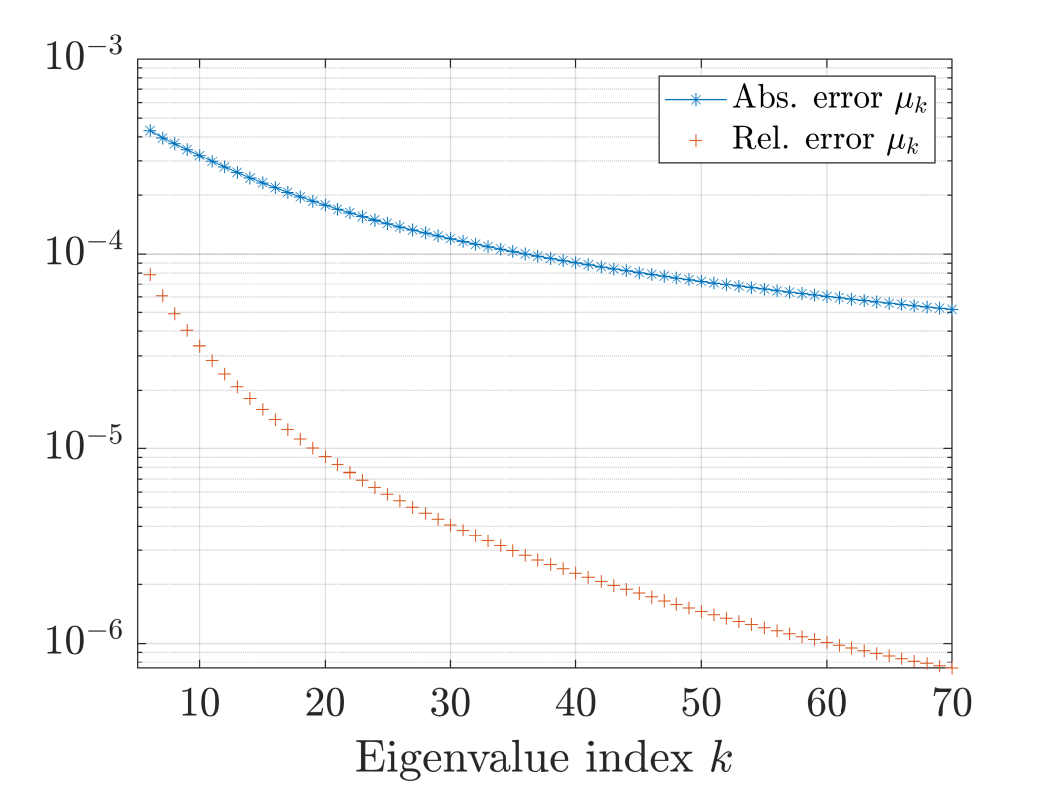}
\par\end{center}
\caption{\label{fig:Abs.-and-rel.PsiCompeltionFrom5}Example \ref{-CompletiionH}.
Abs. and rel. errors of $\mu_{k}$, $k=6,\ldots,70$ computed from
5 eigenvalues.}
\end{minipage}\hspace{0.5cm}%
\begin{minipage}[c][1\totalheight][t]{0.45\textwidth}%
\begin{center}
\includegraphics[scale=0.57]{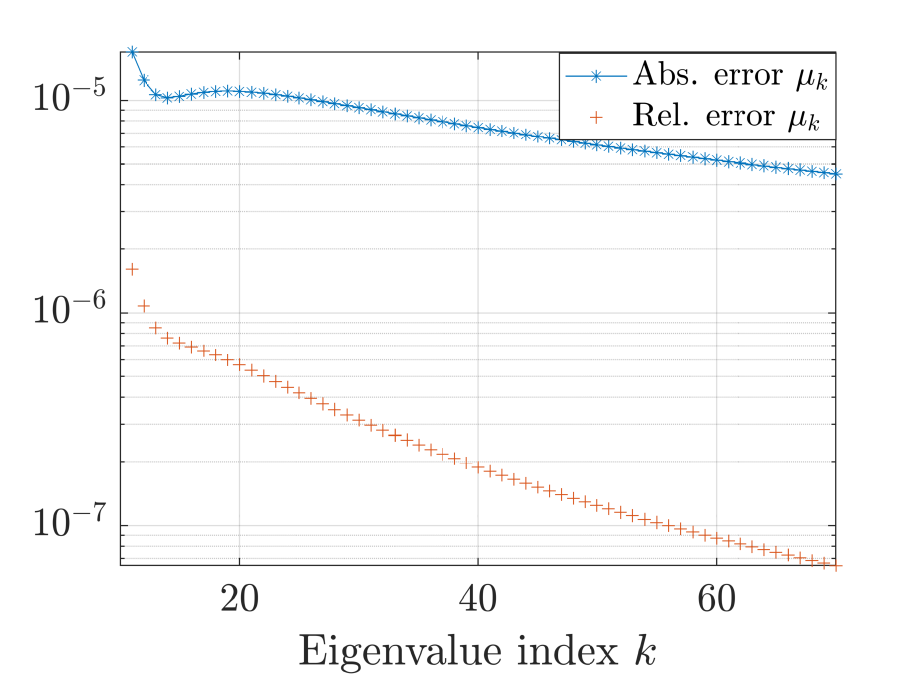}
\par\end{center}
\caption{\label{fig:Abs.-and-rel.PsiComplFrom10}Example \ref{-CompletiionH}.
Abs. and rel. errors of $\mu_{k}$, $k=11,\ldots,70$ computed from
10 eigenvalues.}
\end{minipage}
\end{figure}

Since a good accuracy is attained even with a reduced number of given
eigenvalues, the information obtained may result to be useful for
solving other problems related with the same Sturm-Liouville equation,
e.g., to the spectrum completion of another set of the eigenvalues
corresponding the same Sturm-Liouville equation with other boundary
conditions (see problem (\ref{eq:FirstSLP-1}) below).

Now, assume there is given the set $\left\{ \xi_{k}^{2}\right\} _{k=1}^{M}$
of eigenvalues of the Sturm-Liouville problem

\begin{equation}
\begin{array}{c}
-y''(x)+2i\cos(2x)+y(x)=\rho^{2}y(x),\,0<x<\pi\\
y'(0)\,-hy(0)=0,\,y'(\pi)+Hy(\pi)=0,
\end{array}\label{eq:FirstSLP-1}
\end{equation}
with $h=0.7$ (which is assumed to be unknown). Recall the characteristic
function (\ref{eq:DeltaCharfun}).

The knowledge of the approximate solution $\psi_{N}(\rho,0)$, $N=5,10$
and the value of $\omega_{H}$ (found in the spectrum completion of
the eigenvalues of problem (\ref{eq:SecondSLP-1})) is used as known
variables in the linear system (step 2 in spectrum completion algorithm)
obtained from equations $\Delta(\xi_{k}^{2})=0$. Solving this system
we obtain the coefficients $\left\{ \mathring{\psi}_{n}(0)\right\} _{n=0}^{M-2}$
and the value of $h$, and then $\Delta_{N}\left(\rho^{2}\right)$
for any $\rho\in\mathbb{C}$.

Given 5 roots of the eigenvalues ($M=5$), 65 more were computed with
a maximum absolute error of $1.69\times10^{-3}$, see Figure \ref{fig:Abs.-and-rel.DPsiCompeltionFrom5}.
Additionally, from 10 roots of the eigenvalues ($M=10$) 60 more were
computed with a maximum absolute error of $1.34\times10^{-4}$, see
Figure \ref{fig:Abs.-and-rel.-DpsiCompleFrom10}.

\begin{figure}[H]
\centering{}%
\begin{minipage}[c][1\totalheight][t]{0.45\textwidth}%
\begin{center}
\includegraphics[scale=0.57]{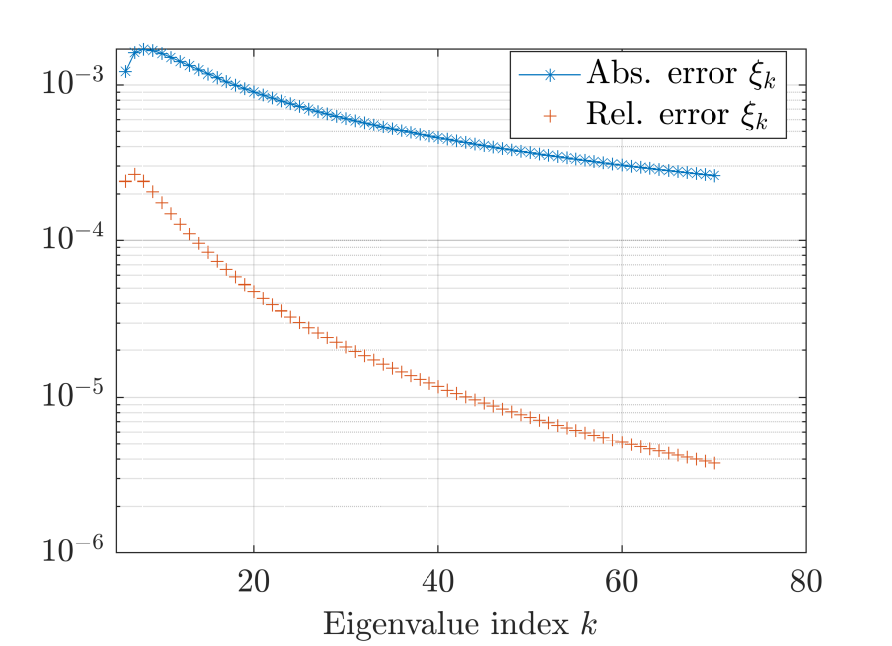}
\par\end{center}
\caption{\label{fig:Abs.-and-rel.DPsiCompeltionFrom5}Example \ref{-CompletiionH}.
Abs. and rel. errors of $\xi_{k}$, $k=6,\ldots,70$ computed from
5 eigenvalues.}
\end{minipage}\hspace{0.5cm}%
\begin{minipage}[c][1\totalheight][t]{0.45\textwidth}%
\begin{center}
\includegraphics[scale=0.6]{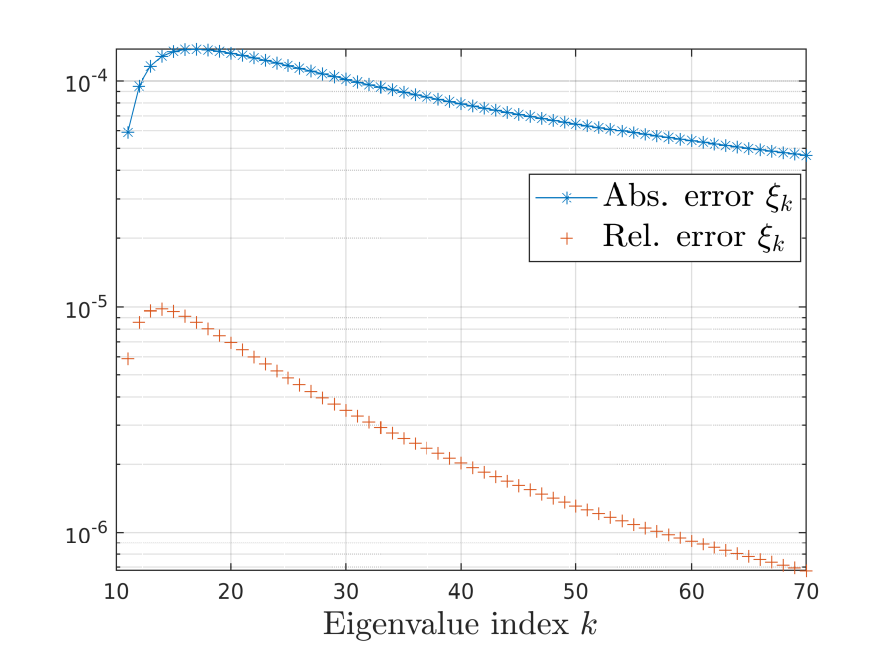}
\par\end{center}
\caption{\label{fig:Abs.-and-rel.-DpsiCompleFrom10}Example \ref{-CompletiionH}.
Abs. and rel. errors of $\xi_{k}$, $k=11,\ldots,70$ computed from
10 eigenvalues.}
\end{minipage}
\end{figure}

Moreover, the parameter $h$ is obtained with an absolute error of
$5.6\times10^{-2}$ from 5 eigenvalues and with an absolute error
of $1\times10^{-2}$ from 10 eigenvalues.

In this case, considerably more noise (corresponding to $\psi_{N}(\rho,0)$
and $\omega_{H}$) was added to the input data forming the linear
system of the spectrum completion algorithm, however, the spectrum
was completed sufficiently well. Indeed, the results obtained here
were applied below in Example \ref{ExaInverseMathi}, for solving
a two-spectra inverse Sturm-Liouville problem for the same potential.

\end{example} 

\section{Computation of a Weyl function from two finite sets of eigenvalues\label{sec:Weyl-Titchmarsh-m-function}}

In this section we discuss the computation of the Weyl function $M(\lambda)$
from two spectra $\left\{ \xi_{k}^{2}\right\} _{k=0}^{\infty}$ and
$\left\{ \mu_{m}^{2}\right\} _{m=0}^{\infty}$ corresponding to the
Sturm-Liouville problems (\ref{eq:SLProblemCompletion}) and
\begin{equation}
\begin{array}{c}
-y''(x)+q(x)y(x)=\rho^{2}y(x),\\
y(0)=0,\,y'(L)+Hy(L)=0,
\end{array}\label{eq:SecondSLP}
\end{equation}
respectively.

There are the well-known formulas (see \cite{Bterin 2007}, \cite{ButerinYurko}),
which in principle give us the Weyl function in terms of these sets
of the eigenvalues. Namely,
\begin{equation}
M(\lambda)=-\frac{\Delta^{\circ}(\lambda)}{\Delta(\lambda)}=-\frac{{\displaystyle \prod_{n=0}^{\infty}\frac{\mu_{n}^{2}-\lambda}{(n+1/2)^{2}}}}{L(\xi_{0}^{2}-\lambda){\displaystyle \prod_{n=1}^{\infty}\frac{\xi_{n}^{2}-\lambda}{n^{2}}}}\label{eq:Infintie Prducts}
\end{equation}
where $\Delta(\lambda)$ (see (\ref{eq:DeltaCharfun})) and $\Delta^{\circ}(\lambda)$
(see (\ref{eq:DelltaCharfun2})) are the characteristic functions
of problems (\ref{eq:SLProblemCompletion}) and (\ref{eq:SecondSLP}),
respectively.

In Section \ref{sec:Spectrum-completion} the algorithm for solving
the problem of spectrum completion is presented, in which the first
3 steps consist in the computation of $\Delta(\lambda)$ (procedure
analogous for the computation of $\Delta^{\circ}(\lambda)$) from
the corresponding finite set of the eigenvalues. 

Thus, in order to discuss the calculation of $M(\lambda)$ from (\ref{eq:Infintie Prducts})
let us illustrate in the following example the approximation of the
characteristic functions $\Delta(\lambda)$ and $\Delta^{\circ}(\lambda)$
from given corresponding sets of the eigenvalues $\left\{ \xi_{k}^{2}\right\} _{k=0}^{M_{1}}$
and $\left\{ \mu_{m}^{2}\right\} _{m=0}^{M_{2}}$ by using the series
representation (\ref{eq:PArtialPPSI}) in comparison with the use
of the products in (\ref{eq:Infintie Prducts}), i.e., by considering
\begin{equation}
\Delta^{\circ}(\lambda)\thickapprox\frac{L^{2}}{\pi^{2}}\prod_{n=0}^{M_{2}}\frac{\mu_{n}^{2}-\lambda}{(n+1/2)^{2}}\label{eq:ApproxDelta2}
\end{equation}
 and 

\begin{equation}
\Delta(\lambda)\thickapprox\frac{L^{3}}{\pi^{2}}(\xi_{0}^{2}-\lambda){\displaystyle \prod_{n=1}^{M_{1}}\frac{\xi_{n}^{2}-\lambda}{n^{2}}}.\label{eq:ApproxDelta1}
\end{equation}

\begin{example}\label{WeylMAthi}Consider\textbf{ }the Mathieu potential
$q(x)=is\cos(2x)$, $x\in[0,\pi]$ with parameter $s=1.468768613785142$,
see \cite{Blanch}, \cite{Mulholland } and \cite{Ziener}. 

For the constants $H=2$ and $h=i$, the characteristic functions
were computed with the aid of Wolfram Mathematica12. They were represented
in terms of the Mathieu functions $C(a,q,v)$ (the even solution of
the Mathieu differential equation), $S(a,q,v)$ (the odd solution
of the Mathieu differential equation) and their derivatives with respect
to the parameter $v$, see \cite[Ch 10.]{Abramo}. 

The exact eigenvalues were computed by the function $FindRoot$ to
obtain the numerical roots of the corresponding characteristic functions.
Figure \ref{fig:ErrorPsiSincGiven14MathieuDoublePoint} presents the
absolute error of $\Delta_{13}^{\circ}(\lambda)=\psi_{13}(\rho,0)$
computed in a strip of the upper complex $\rho$-plane from given
15 eigenvalues. The maximum absolute error is $1.18\times10^{-2}$
achieved at $-4+1.8i$. In Figure \ref{fig:ErrorRelPsiSinDoublePint}
the relative error is presented. The maximum relative error achieved
at $-25.4$ is $1\times10^{-4}$.

\begin{figure}[H]
\begin{minipage}[c][1\totalheight][t]{0.45\textwidth}%
\begin{center}
\includegraphics[scale=0.67]{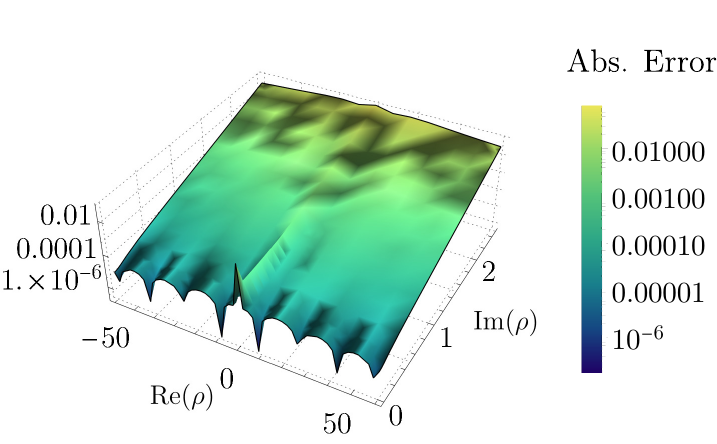}
\par\end{center}
\caption{\label{fig:ErrorPsiSincGiven14MathieuDoublePoint}Example \ref{WeylMAthi}.
Absolute error of $\Delta_{13}^{\circ}(\lambda)$ computed from $\left\{ \mu_{k}\right\} _{k=0}^{14}$
by using (\ref{eq:PArtialPPSI}).}
\end{minipage}\hfill{}%
\begin{minipage}[c][1\totalheight][t]{0.45\textwidth}%
\begin{center}
\includegraphics[scale=0.67]{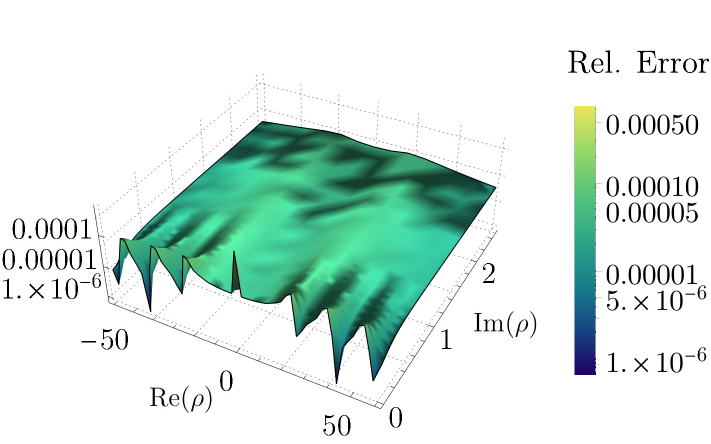}
\par\end{center}
\caption{\label{fig:ErrorRelPsiSinDoublePint}Example \ref{WeylMAthi}. Relative
error of $\Delta_{13}^{\circ}(\lambda)$ computed from $\left\{ \mu_{k}\right\} _{k=0}^{14}$
by using (\ref{eq:PArtialPPSI}).}
\end{minipage}
\end{figure}

Note that the approximation of the characteristic function decreases
as the absolute value of $\rho$ grows in the upper half-plane. In
most cases these results improve if more eigenvalues are given.

Additionally, in Figures \ref{fig:ErrorPsiSincGiven14MathieuDoublePoint-1}
and \ref{fig:ErrorPsiProductsGiven14MathieuDoublePoint-1}, the absolute
and relative errors, respectively, of $\Delta_{13}^{\circ}(\lambda)$
computed by (\ref{eq:ApproxDelta2}).

\begin{figure}[H]
\begin{minipage}[c][1\totalheight][t]{0.45\textwidth}%
\begin{center}
\includegraphics[scale=0.65]{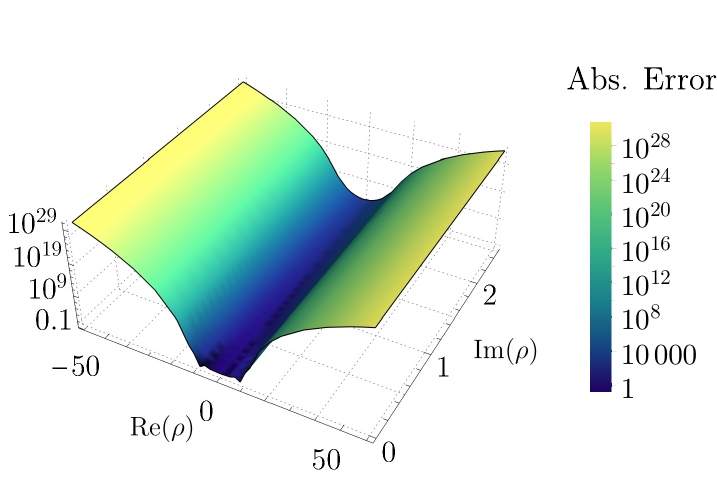}
\par\end{center}
\caption{\label{fig:ErrorPsiSincGiven14MathieuDoublePoint-1}Example \ref{WeylMAthi}.
Absolute error of $\Delta^{\circ}(\lambda)$ computed from $\left\{ \mu_{k}\right\} _{k=0}^{14}$
by using (\ref{eq:ApproxDelta2}).}
\end{minipage}\hfill{}%
\begin{minipage}[c][1\totalheight][t]{0.45\textwidth}%
\begin{center}
\includegraphics[scale=0.65]{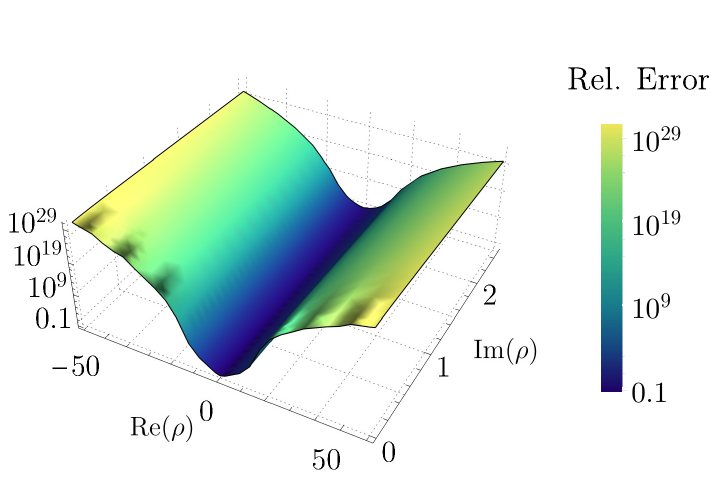}
\par\end{center}
\caption{\label{fig:ErrorPsiProductsGiven14MathieuDoublePoint-1}Example \ref{WeylMAthi}.
Relative error of $\Delta^{\circ}(\lambda)$ computed from $\left\{ \mu_{k}\right\} _{k=0}^{14}$
by using (\ref{eq:ApproxDelta2}).}
\end{minipage}
\end{figure}

Analogously, similar results are obtained for $\Delta(\lambda)$ computed
from given 15 eigenvalues by using both the cardinal series representation
$h\psi_{13}(\rho,0)-\mathring{\psi}_{14}(\rho,0)$ and the product
(\ref{eq:ApproxDelta1}). The knowledge of $\psi_{13}(\rho,0)$, with
absolute error presented in Figure \ref{fig:ErrorPsiSincGiven14MathieuDoublePoint},
is used as input data in the linear system. A detailed explanation
of this procedure was presented in Example \ref{-CompletiionH}, when
dealing with the problem (\ref{eq:FirstSLP-1}).

This simple comparison makes it obvious, that the analytical formulas
(\ref{eq:ApproxDelta2}) and (\ref{eq:ApproxDelta1}) are not of practical
usage. Instead, the series representations developed on the present
work give us accurate results and indeed allow us to reconstruct the
Weyl function from two finite sets of the eigenvalues.

In Figure \ref{fig:ErrorPsiGiven14MathieuDoublePointPRODUCTS-1} the
approximate Weyl function computed with the aid of the cardinal series
representations is presented. The corresponding absolute error of
this approximation is shown in Figure \ref{fig:ErrorRelPsiProductsGiven14MathieuDoublePointPRODUCTS-1}.

\begin{figure}[H]
\begin{minipage}[c][1\totalheight][t]{0.45\textwidth}%
\begin{center}
\includegraphics[scale=0.63]{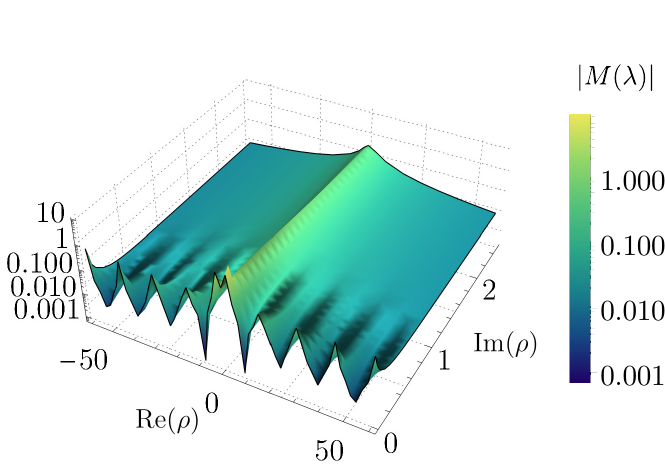}
\par\end{center}
\caption{\label{fig:ErrorPsiGiven14MathieuDoublePointPRODUCTS-1}Example \ref{WeylMAthi}.
Approximate Weyl function computed from 15 eigenvalues, and with the
aid of cardinal series representations.}
\end{minipage}\hfill{}%
\begin{minipage}[c][1\totalheight][t]{0.45\textwidth}%
\begin{center}
\includegraphics[scale=0.62]{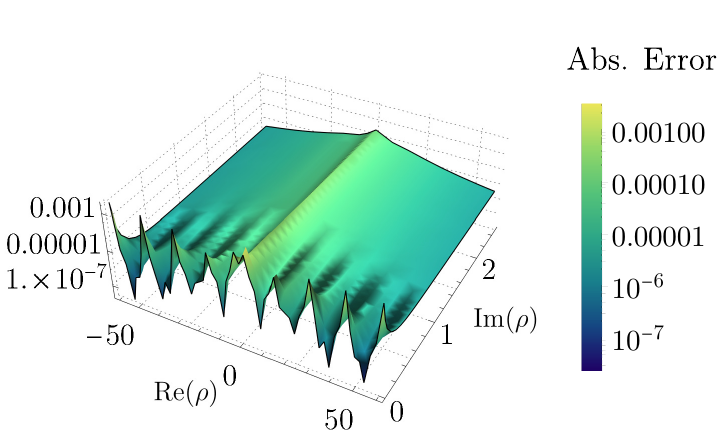}
\par\end{center}
\caption{\label{fig:ErrorRelPsiProductsGiven14MathieuDoublePointPRODUCTS-1}Example
\ref{WeylMAthi}. Abs. error of the recovered Weyl function computed
from 15 eigenvalues with the aid of cardinal series representations.}
\end{minipage}
\end{figure}

The presented results confirm that the method proposed in the first
3 steps of the spectrum completion algorithm to approximate the corresponding
characteristic function has a much higher convergence than that based
on the products in (\ref{eq:Infintie Prducts}). Additionally, the
Weyl function is computed accurately from 15 eigenpairs.

\end{example} 
\begin{rem}
A similar approach is applicable to other Weyl functions, such as
the Weyl-Titchmarsh m-function
\[
m\left(\lambda\right)=\frac{\psi'(\rho,0)}{\psi(\rho,0)},
\]

which is obtained when $h=0$ and $H=0$, see \cite{Hor} and \cite{Simon}. 
\end{rem}

\section{Inverse problem\label{sec:Inverse-problem}}

In this section we discuss a method for solving the two-spectra inverse
problem in which $q$, $h$ and $H$ are complex.

\textbf{Problem (two-spectra inverse problem) }Given two sequences
of the eigenvalues $\left\{ \xi_{k}^{2}\right\} _{k=1}^{\infty}$
and $\left\{ \mu_{m}^{2}\right\} _{m=1}^{\infty}$ of the Sturm-Liouville
problems, respectively, 
\begin{equation}
\begin{array}{c}
-y''(x)+q(x)y(x)=\rho^{2}y(x),\\
y'(0)\,-hy(0)=0,\,y'(L)+Hy(L)=0
\end{array}\label{eq:FirstSLP}
\end{equation}
and
\begin{equation}
\begin{array}{c}
-y''(x)+q(x)y(x)=\rho^{2}y(x),\\
y(0)=0,\,y'(L)+Hy(L)=0.
\end{array}\label{eq:SecondSLPP}
\end{equation}

Find the potential $q(x)$ and the complex constants $h$ and $H$. 

$ $

As mentioned in the previous section, a characteristic function of
problem (\ref{eq:FirstSLP}) is (\ref{eq:DeltaCharfun}), and a characteristic
function of problem (\ref{eq:SecondSLPP}) is (\ref{eq:DelltaCharfun2}). 

Additionally, recall the identity satisfied by these characteristic
functions 
\begin{equation}
\psi(\rho,x)=(\psi'(\rho,0)-h\psi(\rho,0))S(\rho,x)+\psi(\rho,0)\phi(\rho,x),\rho\in\mathbb{C},\label{eq:genEq}
\end{equation}

see \cite[Formula (1.4.9), p. 29 ]{FreYurko}.

The approach developed for solving the two-spectra inverse problem
develops the idea from \cite{VKComplex}, where Neumann series of
Bessel functions representations for solutions of (\ref{eq:PrincipalEq})
were used. Our approach is based on the cardinal series representations
from Section \ref{sec:Cardinal-series-representation}. The potential
is recovered from the first coefficients of the representations. Moreover,
two possibilities are explored. The first one involves the cardinal
series from Theorem \ref{thm:Let-.-TheNewSinc}. Due to (\ref{eq:op2-1}),
no differentiation is required in the final step. The second one deals
with the cardinal series representation from Theorem \ref{thm:Let-.-The OldSIn},
and the potential is recovered from (\ref{eq:Potential First Coeffs}),
which implies a double differentiation in the last step.

The algorithm to recover $q(x)$, $h$ and $H$ consists of the following
steps.

\textbf{Numerical algorithm for solving the} \textbf{two-spectra inverse
problem for potentials from }$W^{2,1}[0,\pi]$

Assume that two finite sets $\left\{ \xi_{k}^{2}\right\} _{k=1}^{K}$
and $\left\{ \mu_{m}^{2}\right\} _{m=1}^{M}$ of the eigenvalues of
problems (\ref{eq:FirstSLP}) and (\ref{eq:SecondSLPP}), respectively,
are given.
\begin{enumerate}
\item Consider the approximate characteristic function $\Delta_{N}^{\circ}(\lambda)=\psi_{N}(\rho,0)$
of problem (\ref{eq:SecondSLPP}) on the set $\left\{ \mu_{m}\right\} _{m=1}^{M}$
, i.e.,
\begin{align}
\psi_{N}(\mu_{m},0) & =\cos(\mu_{m}L)+\frac{\sin(\mu_{m}L)}{\mu_{m}}\omega_{H}-\frac{L}{\mu_{k}}j_{1}(\mu_{m}L)q_{L}^{H}(0)\label{eq:CharPsiNewsinc}\\
 & -\frac{1}{\mu_{m}^{2}}\sum_{n=1}^{N}\tilde{\Psi}_{n}(0)\left(\sinc\left(\frac{\mu_{m}L}{\pi}+n\right)+\sinc\left(\frac{\mu_{m}L}{\pi}-n\right)\right)=0,\nonumber 
\end{align}
and $m=1,\ldots,M,\,\,N+2\leq M.$\\
Solve the finite system of $M$ linear equations derived from the
equalities 
\begin{align*}
 & \frac{\sin(\mu_{m}L)}{\mu_{m}}\omega_{H}-\frac{L}{\mu_{k}}j_{1}(\mu_{m}L)q_{L}^{H}(0)\\
 & -\frac{1}{\mu_{m}^{2}}\sum_{n=1}^{N}\tilde{\Psi}_{n}(0)\left(\sinc\left(\frac{\mu_{m}L}{\pi}+n\right)+\sinc\left(\frac{\mu_{m}L}{\pi}-n\right)\right)=-\cos(\mu_{m}L),
\end{align*}
which are obtained from (\ref{eq:CharPsiNewsinc}). This gives the
coefficients computed
\begin{equation}
\omega_{H},\,q_{L}^{H}(0)\,\,\text{and}\,\,\left\{ \tilde{\Psi}_{n}(0)\right\} _{n=1}^{N}.\label{eq:CoefsNewSincPsi}
\end{equation}
Note that, using the coefficients (\ref{eq:CoefsNewSincPsi}) (independent
on $\rho$), the approximate characteristic function $\psi_{N}\left(\rho,0\right)$
of problem (\ref{eq:SecondSLPP}) can be computed for any $\rho$. 
\item Consider the approximate characteristic function $h\psi_{N}(\rho,0)-\mathring{\psi}_{N}(\rho,0)$
on the set $\left\{ \xi_{k}\right\} _{k=1}^{K}$, i.e.,{\small{}
\begin{align}
 & \mathring{\psi}_{N}(\xi_{k},0)-h\psi_{N}(\xi_{k},0)=\xi_{k}\sin(\xi_{k}L)+\omega_{H}\xi_{k}Lj_{1}(\xi_{k}L)\nonumber \\
 & +\sum_{n=0}^{N-1}\mathring{\psi}_{n}(0)\left(\sinc\left(\frac{\xi_{k}L}{\pi}+n\right)+\sinc\left(\frac{\xi_{k}L}{\pi}-n\right)\right)-h\psi_{N}(\xi_{k},0)=0,\,k=1,\ldots,K.\label{eq:AuxEq}
\end{align}
}Solve the finite system of $K$ linear equations derived from the
equalities{\small{}
\begin{align}
\sum_{n=0}^{N-1}\mathring{\psi}_{n}(0)\left(\sinc\left(\frac{\xi_{k}L}{\pi}+n\right)+\sinc\left(\frac{\xi_{k}L}{\pi}-n\right)\right)-h\psi_{N}(\xi_{k},0)=-\omega_{H}\xi_{k}Lj_{1}(\xi_{k}L)-\xi_{k}\text{sin}(\xi_{k}L),\label{eq:AuxEq-2}
\end{align}
}$k=1,\ldots,K,$ which are obtained from (\ref{eq:AuxEq}). The knowledge
of $\psi_{N}\left(\rho,0\right)$ simplifies the system, and solving
it we obtain the coefficients
\begin{equation}
h\,\,\,\text{and}\,\,\,\left\{ \mathring{\psi}_{n}(0)\right\} _{n=0}^{N-1}.\label{eq:CoefsDSincPsi}
\end{equation}
Note that, by using the coefficients (\ref{eq:CoefsDSincPsi}) (independent
on $\rho$), and $\psi_{N}\left(\rho,0\right)$ obtained above, the
approximate characteristic function $\mathring{\psi}_{N}(\rho,0)-h\psi_{N}(\rho,0)$
can be computed for any $\rho$. 
\item Substitution of (\ref{eq:TruncatedNewCardinalS}), (\ref{eq:TruncatedNewCardinalPhi-1})
and (\ref{eq:PArtialPPSI}), into (\ref{eq:genEq}) leads to the approximate
equality
\begin{equation}
\psi_{N}(\rho,x)=\left(\mathring{\psi}_{N}(\rho,0)-h\psi_{N}(\rho,0)\right)\mathtt{s}_{N}(\rho,x)+\psi_{N}(\rho,0)\Phi_{N}(\rho,x),\label{eq:eq:genEqTrunca}
\end{equation}
where different values of $N$ can be considered in each sum. Equation
(\ref{eq:eq:genEqTrunca}) implies
\begin{align}
 & \cos(\rho(x-L))+\frac{\sin(\rho(L-x))}{\rho}\omega_{H}(x)-\frac{L-x}{\rho}j_{1}(\rho(L-x))q_{L}^{H}(x)\nonumber \\
 & -\frac{1}{\rho^{2}}\sum_{n=1}^{N}\tilde{\Psi}_{n}(x)\left(\sinc\left(\frac{\rho(L-x)}{\pi}+n\right)+\sinc\left(\frac{\rho(L-x)}{\pi}-n\right)\right)\nonumber \\
 & =\left(\mathring{\psi}_{N}(\rho,0)-h\psi_{N}(\rho,0)\right)\frac{\sin(\rho x)}{\rho}-\left(\mathring{\psi}_{N}(\rho,0)-h\psi_{N}(\rho,0)\right)\frac{\cos(\rho x)}{\rho^{2}}\omega(x)\nonumber \\
 & +\left(\mathring{\psi}_{N}(\rho,0)-h\psi_{N}(\rho,0)\right)\frac{\sin(\rho x)}{\rho^{3}}q^{+}(x)\nonumber \\
 & -\frac{\left(\mathring{\psi}_{N}(\rho,0)-h\psi_{N}(\rho,0)\right)}{2\rho^{3}\sin(\rho x)}\sum_{n=0}^{N-1}\mathfrak{s}_{n}(x)\left(\sinc\left(\frac{2\rho x}{\pi}-2n-1)\right)+\sinc\left(\frac{2\rho x}{\pi}+2n+1\right)\right)\nonumber \\
 & +\psi_{N}(\rho,0)\cos(\rho x)+\psi_{N}(\rho,0)\frac{\sin(\rho x)}{\rho}\omega_{h}(x)-\frac{x}{\rho}\psi_{N}(\rho,0)j_{1}(\rho x)q^{h}(x)\nonumber \\
 & -\frac{\psi_{N}(\rho,0)}{\rho^{2}}\sum_{n=1}^{N}\Phi_{n}(x)\left(\sinc\left(\frac{\rho x}{\pi}-n\right)+\sinc\left(\frac{\rho x}{\pi}+n\right)\right),\,\,x\in(0,L),\,\rho\in\mathbb{C\setminus}\{0\}.\label{eq:MainEq}
\end{align}
where $\omega_{h}(x)=\omega(x)+h$. Here, the coefficients 
\begin{equation}
\omega_{H}(x),q_{L}^{H}(x),\left\{ \tilde{\Psi}_{n}(x)\right\} _{n=1}^{N},\omega(x),q^{+}(x),\left\{ \mathfrak{s}_{n}(x)\right\} _{n=0}^{N-1},\omega_{h}(x),q^{h}(x)\,\,\text{and}\,\,\left\{ \Phi_{n}(x)\right\} _{n=1}^{N}\label{eq:unkowns}
\end{equation}
are unknown. Then, for any $x\in(0,L)$ construct a linear system
of $J$ equations for the unknowns (\ref{eq:unkowns}), by considering
(\ref{eq:MainEq}) evaluated at a finite set $\left\{ \tilde{\rho}_{j}\right\} _{j=1}^{J}$,
\begin{align}
 & \frac{L-x}{\tilde{\rho}_{j}}j_{1}(\tilde{\rho}_{j}(L-x))q_{L}^{H}(x)-\frac{\sin(\tilde{\rho}_{j}(L-x))}{\tilde{\rho}_{j}}\omega_{H}(x)\nonumber \\
 & +\frac{1}{\tilde{\rho}_{j}^{2}}\sum_{n=1}^{N}\tilde{\Psi}_{n}(x)\left(\sinc\left(\frac{\tilde{\rho}_{j}(L-x)}{\pi}+n\right)+\sinc\left(\frac{\tilde{\rho}_{j}(L-x)}{\pi}-n\right)\right)\nonumber \\
 & -\left(\mathring{\psi}_{N}(\tilde{\rho}_{j},0)-h\psi_{N}(\tilde{\rho}_{j},0)\right)\left(\frac{\cos(\tilde{\rho}_{j}x)}{\tilde{\rho}_{j}^{2}}\omega(x)-\frac{\sin(\tilde{\rho}_{j}x)}{\tilde{\rho}_{j}^{3}}q^{+}(x)\right)\nonumber \\
 & -\frac{\left(\mathring{\psi}_{N}(\tilde{\rho}_{j},0)-h\psi_{N}(\tilde{\rho}_{j},0)\right)}{2\tilde{\rho}_{j}^{3}\sin(\tilde{\rho}_{j}x)}\sum_{n=0}^{N-1}\mathfrak{s}_{n}(x)\left(\sinc\left(\frac{2\tilde{\rho}_{j}x}{\pi}-2n-1)\right)+\sinc\left(\frac{2\tilde{\rho}_{j}x}{\pi}+2n+1\right)\right)\nonumber \\
 & +\psi_{N}(\tilde{\rho}_{j},0)\left(\frac{\sin(\tilde{\rho}_{j}x)}{\tilde{\rho}_{j}}\omega_{h}(x)-\frac{x}{\tilde{\rho}_{j}}j_{1}(\tilde{\rho}_{j}x)q^{h}(x)\right)\nonumber \\
 & -\frac{\psi_{N}(\tilde{\rho}_{j},0)}{\tilde{\rho}_{j}^{2}}\sum_{n=1}^{N}\Phi_{n}(x)\left(\sinc\left(\frac{\tilde{\rho}_{j}x}{\pi}-n\right)+\sinc\left(\frac{\tilde{\rho}_{j}x}{\pi}+n\right)\right)\label{eq:MainEq-1}\\
 & \boldsymbol{=}\cos(\tilde{\rho}_{j}(x-L))-\left(\mathring{\psi}_{N}(\tilde{\rho}_{j},0)-h\psi_{N}(\tilde{\rho}_{j},0)\right)\frac{\sin(\tilde{\rho}_{j}x)}{\tilde{\rho}_{j}}-\psi_{N}(\tilde{\rho}_{j},0)\cos(\tilde{\rho}_{j}x),\,j=1,\ldots,J.\nonumber 
\end{align}
 
\item For $x=L$ we have $\psi(\rho,L)=1$, so equation (\ref{eq:eq:genEqTrunca})
at $x=L$ evaluated at a finite set $\left\{ \hat{\rho}_{i}\right\} _{i=1}^{I}$
has the form 
\begin{align*}
 & -\left(\mathring{\psi}_{N}(\hat{\rho}_{i},0)-h\psi_{N}(\hat{\rho}_{i},0)\right)\left(\frac{\cos(\hat{\rho}_{i}L)}{\hat{\rho}_{i}^{2}}\omega-\frac{\sin(\hat{\rho}_{i}L)}{\hat{\rho}_{i}^{3}}q^{+}(L)\right)\\
 & -\frac{\left(\mathring{\psi}_{N}(\hat{\rho}_{i},0)-h\psi_{N}(\hat{\rho}_{i},0)\right)}{2\hat{\rho}_{i}^{3}\sin(\hat{\rho}_{i}L)}\sum_{n=0}^{N-1}\mathfrak{s}_{n}(L)\left(\sinc\left(\frac{2\hat{\rho}_{i}L}{\pi}-2n-1)\right)+\sinc\left(\frac{2\hat{\rho}_{i}L}{\pi}+2n+1\right)\right)\\
 & +\psi_{N}(\hat{\rho}_{i},0)\left(\frac{\sin(\hat{\rho}_{i}L)}{\hat{\rho}_{i}}\omega_{h}-\frac{L}{\hat{\rho}_{i}}j_{1}(\hat{\rho}_{i}L)q^{h}(L)\right)\\
 & -\frac{\psi_{N}(\hat{\rho}_{i},0)}{\hat{\rho}_{i}^{2}}\sum_{n=1}^{N}\tilde{\Phi}_{n}(L)\left(\sinc\left(\frac{\hat{\rho}_{i}L}{\pi}-n\right)+\sinc\left(\frac{\hat{\rho}_{i}L}{\pi}+n\right)\right)\\
 & \boldsymbol{=}1-\left(\mathring{\psi}_{N}(\hat{\rho}_{i},0)-h\psi_{N}(\hat{\rho}_{i},0)\right)\frac{\sin(\hat{\rho}_{i}L)}{\hat{\rho}_{i}}-\psi_{N}(\hat{\rho}_{i},0)\cos(\hat{\rho}_{i}L),\,i=1,\ldots,I,
\end{align*}
\textrm{where} $\omega_{h}=\omega+h$. \\
Solving this system we compute
\begin{equation}
\omega,q^{+}(L),\left\{ \mathfrak{s}_{n}(L)\right\} _{n=0}^{N-1},h,q^{h}(L)\,\,\text{and}\,\,\left\{ \Phi_{n}(L)\right\} _{n=1}^{N}.\label{eq:CoeffsAtL}
\end{equation}
\item From (\ref{eq:CoefsNewSincPsi}) and (\ref{eq:CoeffsAtL}) obtain
the complex numbers $h,H$ and $\omega$. Additionally, since 
\[
q(L)=2(q^{+}(L)+q^{h}(L)+\omega^{2}+h\omega)\,\,\,\text{and}\,\,\,q(0)=2(q^{+}(L)-q^{h}(L)-h\omega)
\]
the potential is approximated at the endpoints of the interval.
\item Finally, it is possible to recover the potential in $(0,L)$ following
one of two options. The first one by derivation: 
\begin{equation}
q(x)=2\frac{d}{dx}\omega(x),\label{eq:op1}
\end{equation}
and the second one from coefficients (\ref{eq:unkowns}) and with
the aid of (\ref{eq:op2-1}).
\end{enumerate}
The performance of the algorithm is illustrated by the following examples.

\begin{example} \label{ExaInverseMathi}Consider a Mathieu potential,
see \cite{Mathieu},
\[
q(x)=is\cos(2x),\,x\in[0,\pi].
\]

Let $s=2$, $h=0.7$ and $H=i$. For computing the spectrum of the
Sturm-Liouville problems (\ref{eq:FirstSLP}) and (\ref{eq:SecondSLPP}),
the NSBF representations (\ref{eq:NsbfS}), (\ref{eq:NsbfPhi}) and
their derivatives (see \cite{Vk2017NSBF}) were used to approximate
the corresponding characteristic functions. Additionally, to find
complex zeros of the approximate characteristic function, the algorithm
of the argument principle theorem was applied, see \cite{Vk14}.

The recovery of the potential was performed from given $10$ pairs
of the eigenvalues (corresponding to the Sturm-Liouville problems
(\ref{eq:FirstSLP}) and (\ref{eq:SecondSLPP})). In step 6 of the
algorithm for solving the inverse problem, both options (\ref{eq:op1})
and (\ref{eq:op2-1}) were implemented, resulting in the recovery
of the potential with an absolute error of $1.7\times10^{-1}$ in
both cases, see Figure \ref{fig:Abs.-errors-orMathew10and15}. Using
$15$ eigenpairs the potential was recovered with a maximum absolute
error of $5.35\times10^{-2},$ see Figure \ref{fig:Abs.-errors-orMathew10and15}.
In steps 3 and 4 of the algorithm, the points $\left\{ \tilde{\rho}_{j}\right\} _{j=1}^{700}$
and $\left\{ \hat{\rho_{i}}\right\} _{i=1}^{700}$ were chosen as
logarithmically equally distributed on the segment $[0.1,1500]$,
and the number $N$ in (\ref{eq:MainEq-1}) was chosen as $N=15$.

\begin{figure}[H]
\begin{centering}
\includegraphics[scale=0.55]{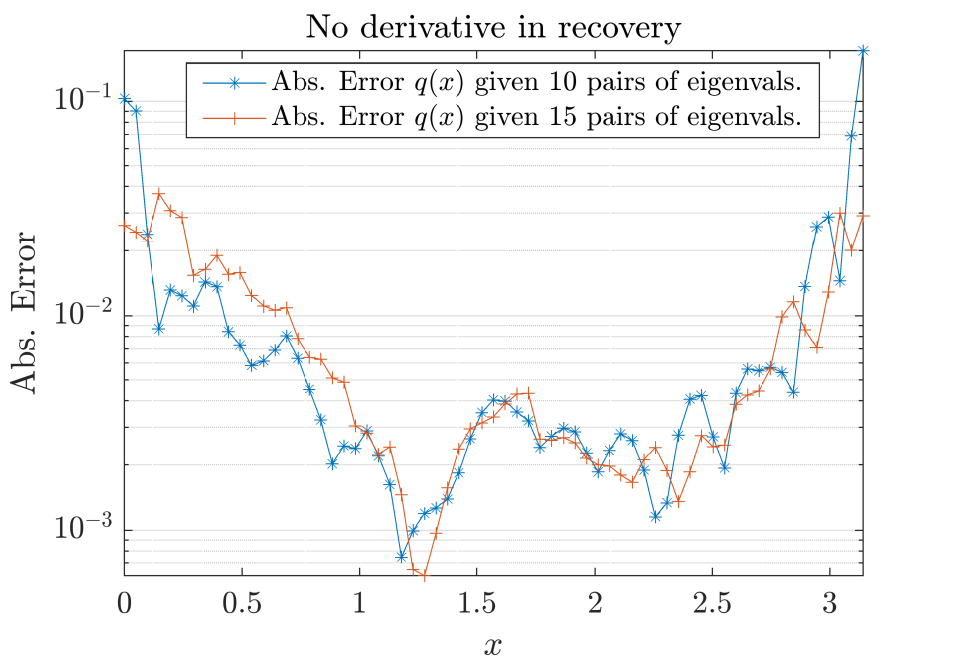}\includegraphics[scale=0.55]{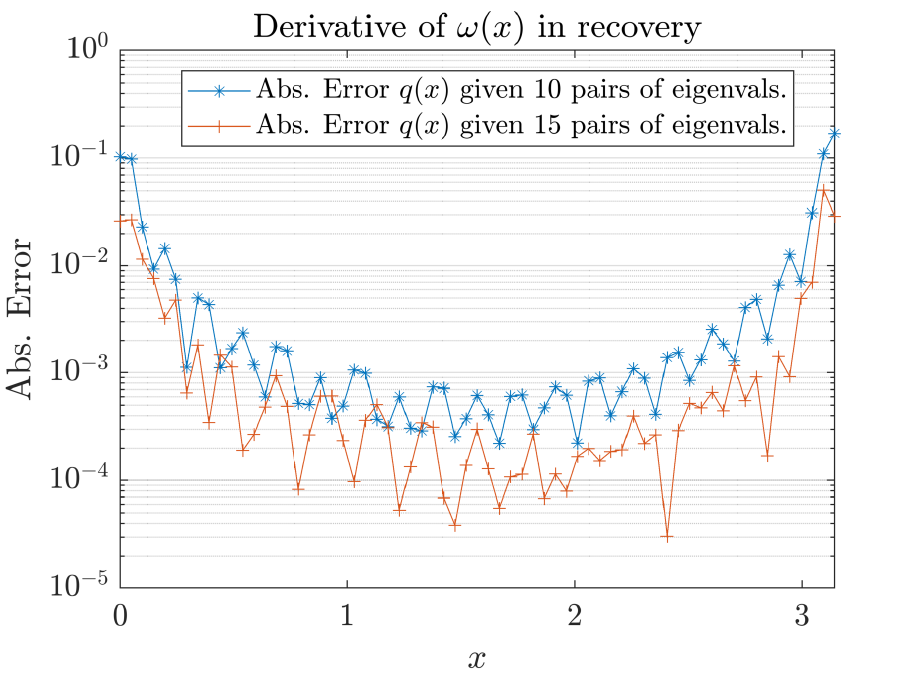}
\par\end{centering}
\caption{\label{fig:Abs.-errors-orMathew10and15}Example \ref{ExaInverseMathi}.
Abs. error of the Mathieu potential recovered from given 10 and 15
eigenpairs, using option (\ref{eq:op2-1}) on the left and option
(\ref{eq:op1}) on the right.}
\end{figure}

Furthermore, for a given set of 10 eigenpairs, the constants $h,H$
and $\omega$ were computed with the absolute errors $1.03\times10^{-2}$,
$1.07\times10^{-2}$ and $8.1\times10^{-3}$, respectively. In the
case of 15 eigenpairs given, the absolute errors of these constants
were $4.16\times10^{-3}$, $4.32\times10^{-3}$ and $1.51\times10^{-3}$,
respectively.

Now, let us consider the points in step 3 as $\left\{ \xi_{k}\right\} _{k=1}^{K}$,
the square roots of the spectrum of the problem (\ref{eq:FirstSLP}),
i.e., $h\psi_{N}(\xi_{k},0)-\mathring{\psi}_{N}(\xi_{k},0)$ is approximately
$0$, and (\ref{eq:eq:genEqTrunca}) is reduced to 
\[
\psi_{N}(\xi_{k},x)=\psi_{N}(\xi_{k},0)\Phi_{N}(\xi_{k},x).
\]

Hence, the number of unknowns in the system of step 3 is reduced to
only those associated with $\psi_{N}(\xi_{k},x)$ and $\Phi_{N}(\xi_{k},x)$.
Despite this reduction, having 10 or 15 eigenvalues is insufficient
to accurately recover the coefficients (\ref{eq:unkowns}). Therefore,
in this case, the spectrum completion procedure presented in Example
\ref{-CompletiionH} results to be very useful for increasing the
number of given eigenvalues as input data.

Associated to the problem (\ref{eq:FirstSLP}), from 10 eigenvalues
65 more eigenvalues found by the spectrum completion algorithm were
used. With this additional information the complex constants $h$,
$H$ and $\omega$ were found with absolute errors $1.57\times10^{-4}$,
$1.29\times10^{-4}$ and $1.34\times10^{-4}$, respectively. Moreover,
the potential was recovered with an absolute error $8.23\times10^{-3}$
with the two options (\ref{eq:op1}) and (\ref{eq:op2-1}), see Figure
\ref{fig:Abs.-errors-orMathew10and15-1}.

\begin{figure}[H]
\begin{centering}
\includegraphics[scale=0.5]{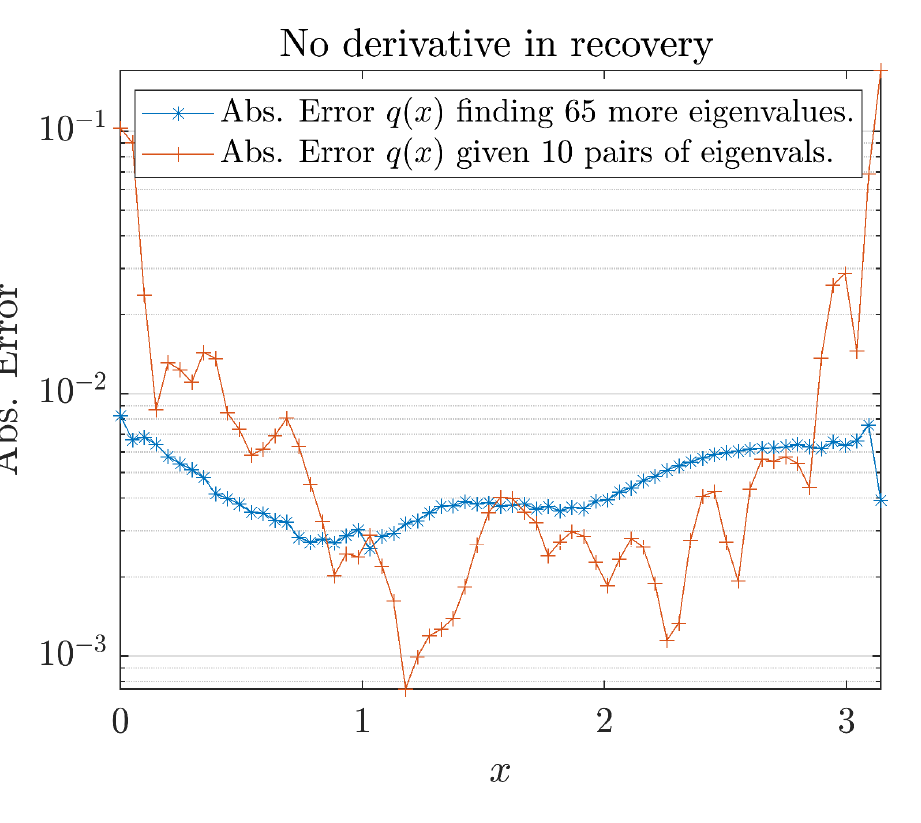}\includegraphics[scale=0.52]{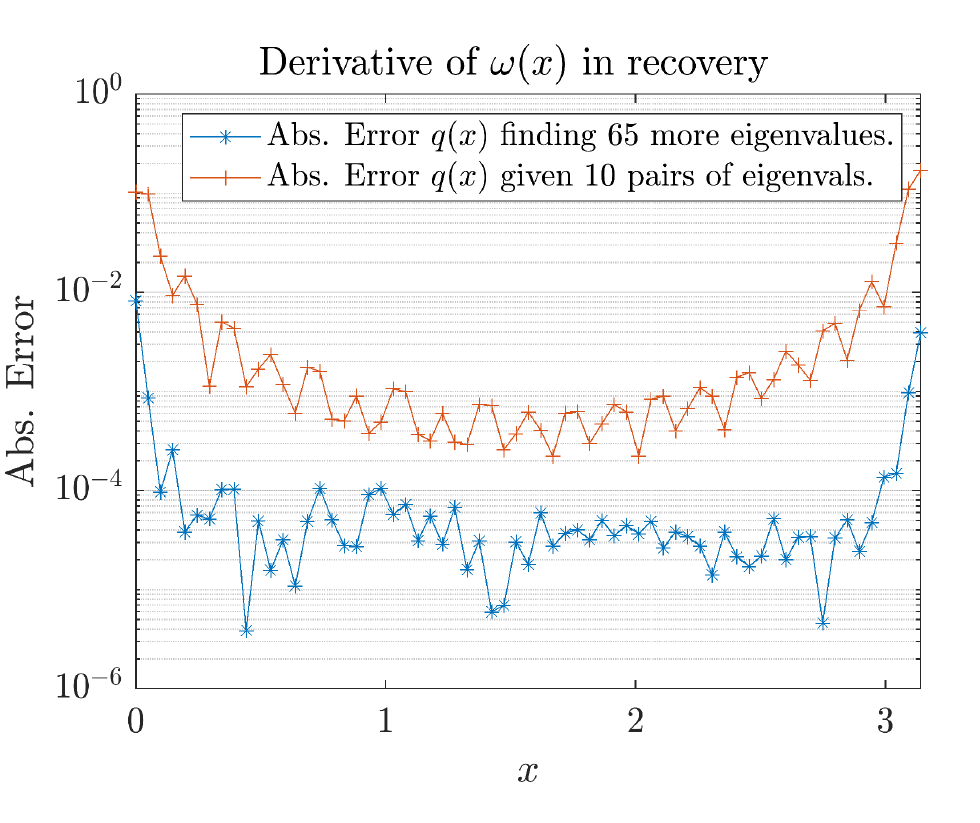}
\par\end{centering}
\caption{\label{fig:Abs.-errors-orMathew10and15-1}Example \ref{ExaInverseMathi}.
Abs. error of the Mathieu potential recovered from given 10 eigenvalues
and one spectrum completed, using option (\ref{eq:op2-1}) on the
left and option (\ref{eq:op1}) on the right.}
\end{figure}

In Figure \ref{fig:Recovered-Mathew-potential10completion}, the recovered
potential (using (\ref{eq:op2-1})), from 10 eigenpairs and 65 more
eigenvalues of problem (\ref{eq:FirstSLP}) completed is presented.
\begin{figure}[H]
\begin{centering}
\includegraphics[scale=0.7]{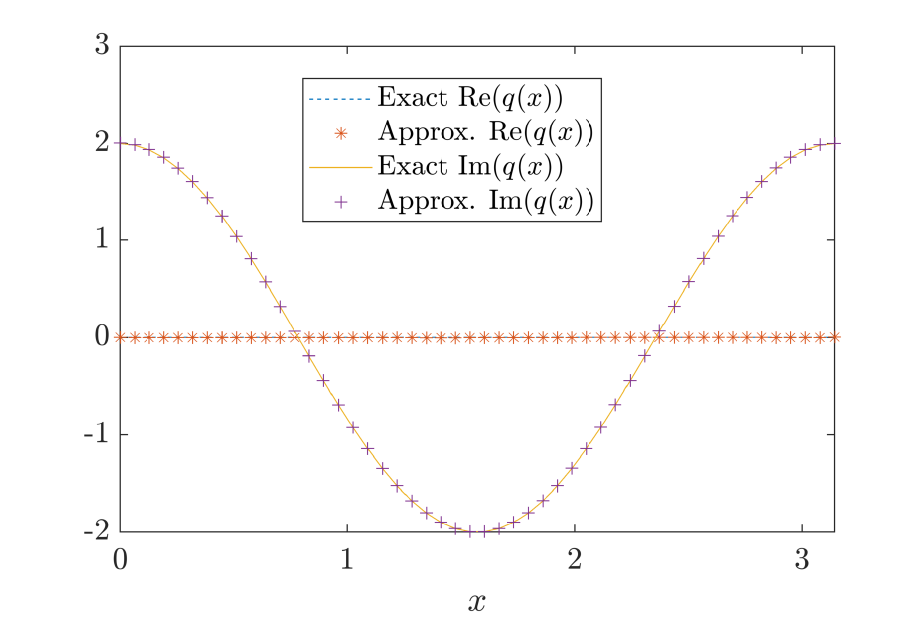}
\par\end{centering}
\caption{\label{fig:Recovered-Mathew-potential10completion}Example \ref{ExaInverseMathi}.
Recovered Mathieu potential $q(x)=2i\cos(2x)$, $x\in[0,\pi]$, with
10 eigenpairs, 65 more eigenvalues completed and no derivatives involved
in the recovery.}
\end{figure}

This example explores the application of the spectrum completion for
solving inverse problems. By using the completed eigenvalues obtained
in Example \ref{-CompletiionH}, the recovery of the potential from
10 eigenpairs was considerably improved.

\end{example}

\begin{example} \label{-Consider-theC1}Consider the potential 
\[
q(x)=\left|x-1\right|(x-1)+\left(\frac{x}{\pi}\right)^{5/3}i,\,x\in[0,\pi].
\]

Ten eigenvalues of each problem (\ref{eq:FirstSLP}) and (\ref{eq:SecondSLPP})
with the parameters $h=0.8$ and $H=0.5$ are given. The constants
$h$, $H$ and $\omega$ are found with the absolute errors $1.35\times10^{-2}$,
$1.42\times10^{-2}$ and $1.06\times10^{-2}$, respectively, while
the potential was recovered with an absolute error of $2.09\times10^{-1}$
(by using both options (\ref{eq:op1}) and (\ref{eq:op2-1})), see
Figure \ref{fig:Abs.-errors-orMathew10and15-1-1}. Now, considering
18 eigenpairs, the constants $h$, $H$ and $\omega$ are found with
the absolute errors $3.43\times10^{-3}$, $3.57\times10^{-3}$ and
$3.25\times10^{-3}$, while the potential was recovered with an absolute
error $7.92\times10^{-2}$, by using both options (\ref{eq:op1})
and (\ref{eq:op2-1}), see Figure \ref{fig:Recovered-C1-potential}.
It is worth to mention that in this case, $\omega(x)$ was recovered
with an absolute error of $2.26\times10^{-3}$. In both cases, given
10 and 18 eigenvalues, the points $\left\{ \tilde{\rho}_{j}\right\} _{j=1}^{1200}$
were chosen as logarithmically equally distributed on the segment
$[0.1,1800]$, and the number $N$ in (\ref{eq:MainEq-1}) was chosen
as $N=10$.

\begin{figure}[H]
\begin{centering}
\includegraphics[scale=0.6]{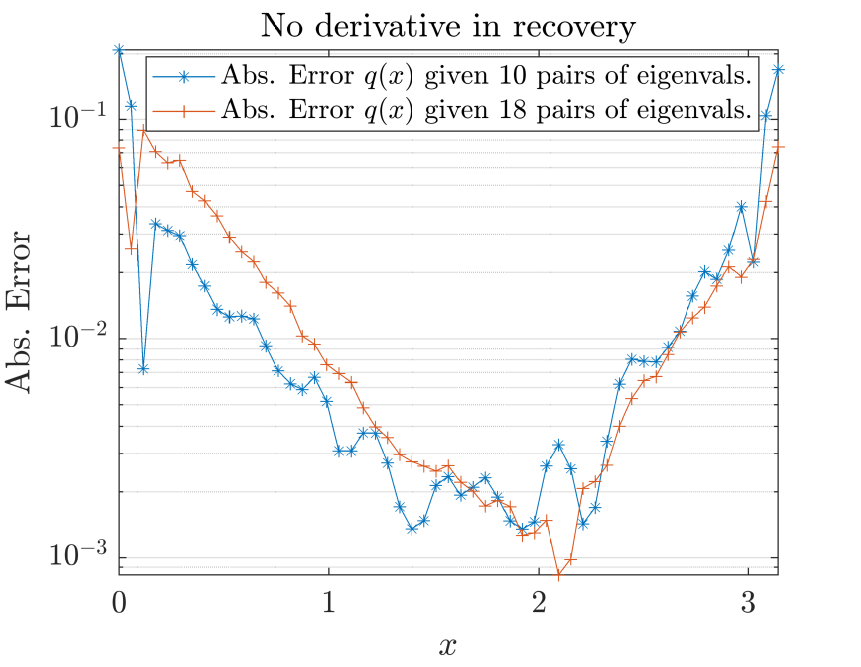}\includegraphics[scale=0.6]{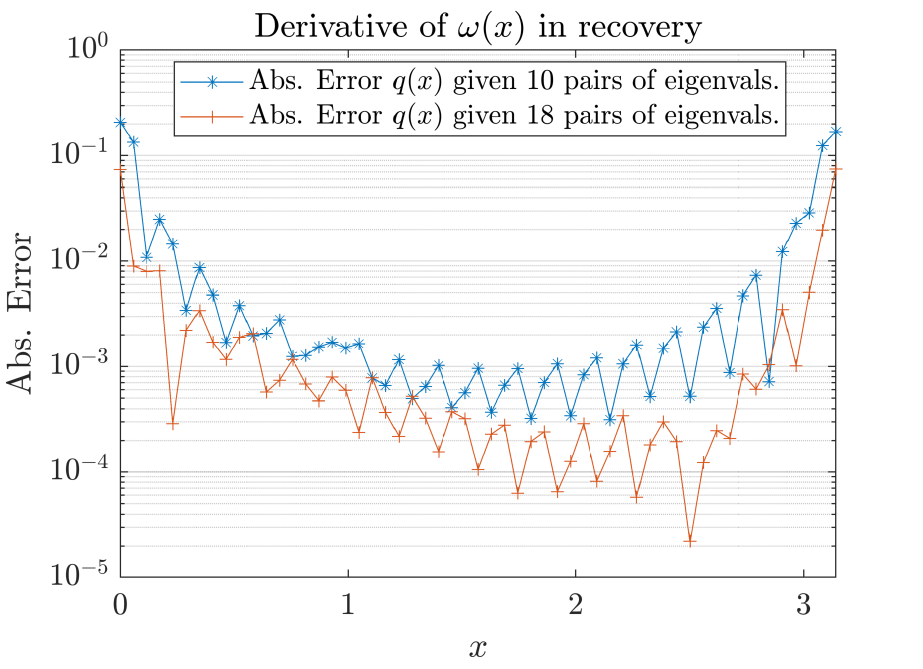}
\par\end{centering}
\caption{\label{fig:Abs.-errors-orMathew10and15-1-1}Abs. error of the potential
of Example \ref{-Consider-theC1} recovered from given 10 and 18 eigenpairs,
using option (\ref{eq:op2-1}) on the left and option (\ref{eq:op1})
on the right.}
\end{figure}

\begin{figure}[H]
\begin{centering}
\includegraphics[scale=0.7]{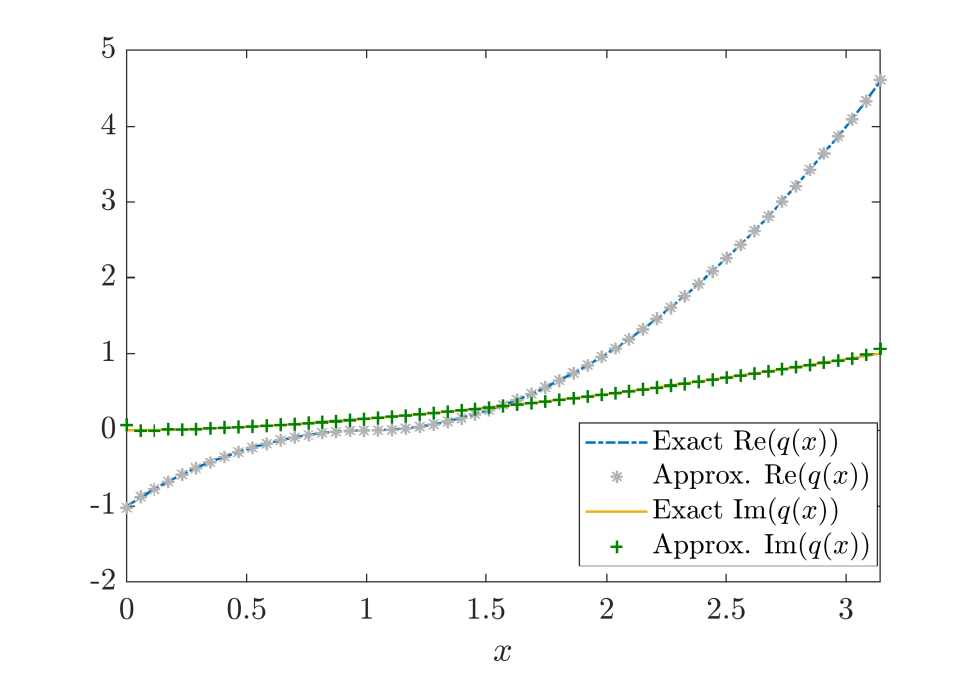}
\par\end{centering}
\caption{\label{fig:Recovered-C1-potential}Potential from Example \ref{-Consider-theC1}
recovered from 18 eigenpairs and taking one derivative of $\omega(x)$.}
\end{figure}

Note that the recovery of this complex valued and only once differentiable
potential required only slightly more input data to attain an accuracy
comparable with previous real-valued and smooth examples.

\end{example} 

\textbf{Numerical algorithm to solving} \textbf{the two-spectra inverse
problem for potentials from $\mathcal{L}_{2}[0,L]$}

Assume that two finite sets $\left\{ \xi_{k}^{2}\right\} _{k=1}^{K}$
and $\left\{ \mu_{m}^{2}\right\} _{m=1}^{M}$ of the eigenvalues of
problem (\ref{eq:FirstSLP}) and (\ref{eq:SecondSLPP}), respectively,
are given. The procedure is analogous to that for potentials in $W^{2,1}[0,L]$.
The difference consists in using the cardinal series representations
introduced in Section \ref{subsec:Cardinal-series-representation}
for $\psi(\rho,x)$, $\phi(\rho,x)$ and $S(\rho,x)$. This implies
a double differentiation in the last step for the recovery of the
potential. 

The algorithm to recover $q(x)$, $h$ and $H$ consists of approximating
(from the given eigenvalues) the characteristic functions $\Delta_{N}^{\circ}(\lambda)$
and $\Delta_{N}(\lambda)$ by $\Psi_{N}(\rho,0)$ and $h\Psi_{N}(\rho,0)-\mathring{\psi}_{N}(\rho,0)$.
Construct for any $x\in(0,L)$ the linear system of $J$ equations
by considering 
\begin{equation}
\Psi_{N}(\rho,x)=(\mathring{\psi}_{N}(\rho,0)-h\Psi_{N}(\rho,0))S_{N}(\rho,x)+\Psi_{N}(\rho,0)\varphi_{N}(\rho,x)\cdot\label{eq:eq:genEqTrunca-1}
\end{equation}
evaluated at a finite set $\left\{ \tilde{\rho}_{j}\right\} _{j=1}^{J}$,
i.e.,
\begin{align*}
 & \sum_{n=0}^{N-1}\psi_{n}(x)\left(\sinc\left(\frac{\tilde{\rho}_{j}(L-x)}{\pi}-n\right)+\sinc\left(\frac{\tilde{\rho}_{j}(L-x)}{\pi}+n\right)\right)\\
 & -\frac{\mathring{\psi}_{N}(\tilde{\rho}_{j},0)-h\Psi_{N}(\tilde{\rho}_{j},0)}{2\tilde{\rho}_{j}\sin(\tilde{\rho}_{j}x)}\sum_{n=0}^{N-1}s_{n}(x)\left(\sinc\left(\frac{2\tilde{\rho}_{j}x}{\pi}-2n-1\right)+\sinc\left(\frac{2\tilde{\rho}_{j}x}{\pi}+2n+1\right)\right)\\
 & -\Psi_{N}(\tilde{\rho}_{j},0)\sum_{n=0}^{N-1}\phi_{n}(x)\left(\sinc\left(\frac{\tilde{\rho}_{j}x}{\pi}-n\right)+\sinc\left(\frac{\tilde{\rho}_{j}x}{\pi}+n\right)\right)\\
 & =\left(\mathring{\psi}_{N}(\tilde{\rho}_{j},0)-h\Psi_{N}(\tilde{\rho}_{j},0)\right)\frac{\sin(\tilde{\rho}_{j}x)}{\tilde{\rho}_{j}}-\Psi_{N}(\tilde{\rho}_{j},0)\tilde{\rho}_{j}xj_{1}(\tilde{\rho}_{j}x)+\tilde{\rho}_{j}(L-x)j_{1}(\tilde{\rho}_{j}(L-x)).
\end{align*}
Solving this system of equations, we obtain the coefficients
\[
\left\{ \psi_{n}(x)\right\} _{n=0}^{N-1},\left\{ s_{n}(x)\right\} _{n=0}^{N-1}\,\,\text{and}\,\,\left\{ \phi_{n}(x)\right\} _{n=0}^{N-1}.
\]

For $x=L$ we have $\psi(\rho,L)=1$, then equation (\ref{eq:eq:genEqTrunca-1})
at $x=L$ evaluated at some chosen points $\left\{ \hat{\rho}_{i}\right\} _{i=1}^{I}$
has the form
\begin{align*}
 & \frac{\left(\mathring{\psi}_{N}(\hat{\rho}_{i},0)-h\Psi_{N}(\hat{\rho}_{i},0)\right)}{2\hat{\rho}_{i}\sin(\hat{\rho}_{i}L)}\sum_{n=0}^{N-1}s_{n}(L)\left(\sinc\left(\frac{2\hat{\rho}_{i}L}{\pi}-2n-1)\right)+\sinc\left(\frac{2\hat{\rho}_{i}L}{\pi}+2n+1\right)\right)\\
 & +\Psi_{N}(\hat{\rho}_{i},0)\sum_{n=0}^{N-1}\phi_{n}(L)\left(\sinc\left(\frac{\hat{\rho}_{i}L}{\pi}-n\right)+\sinc\left(\frac{\hat{\rho}_{i}L}{\pi}+n\right)\right)\\
 & =1-\left(\mathring{\psi}_{N}(\hat{\rho}_{i},0)-h\Psi_{N}(\hat{\rho}_{i},0)\right)\frac{\sin(\hat{\rho}_{i}L)}{\hat{\rho}_{i}}+\hat{\rho}_{i}Lj_{1}(\hat{\rho}_{i}L).
\end{align*}
Solving this system gives us the possibility to obtain the approximate
coefficients
\[
\left\{ s_{n}(L)\right\} _{n=0}^{N-1}\,\,\text{and}\,\,\left\{ \phi_{n}(L)\right\} _{n=0}^{N-1}.
\]

Finally, when the first coefficient $\phi_{0}(x)$ (or $s_{0}(x)$)
$x\in[0,L]$ has been computed, it is possible to recover the potential for $x\in[0,L]$
from (\ref{eq:Potential First Coeffs}).

The performance of the algorithm is illustrated by the following example.

\begin{example} \label{-Consider-theC1-1}Consider the potential
\[
q(x)=\frac{1}{(x-0.5)^{1/3}}+0.4+0.5i,\,x\in[0,1]
\]

and parameters $h=0$, $H=0$ in (\ref{eq:FirstSLP}) and (\ref{eq:SecondSLPP}),
i.e., the Neumann-Neumann and Dirichlet-Neumann Sturm-Liouville problems.
Fifteen pairs of eigenvalues of these two problems are given. They
were computed for the real part of the potential by Matslise, and
a shift of them by $0.5i$ generate the input data of the inverse
problem. It is worth mentioning that these eigenvalues possess a considerable
level of noise. The following simple test illustrates the situation.
The first 15 eigenvalues of the Neumann-Neumann Sturm-Liouville problem
with the potentials $q_{1}(x)=\frac{1}{(x-0.5)^{1/3}}+0.4$ and $q_{2}(x)=\frac{1}{(x-0.5)^{1/3}}$
were computed by Matslise. However, the eigenvalues obtained for $q_{1}(x)$
and a shift by $0.4$ of the eigenvalues for $q_{2}(x)$ resulted
to differ by $9\times10^{-5}$. This test reveals that the ``exact''
input data contain errors, possibly in the fifth digit. Nonetheless,
the algorithm for solving the inverse problem allows us to recover
the potential from the given noisy data. Figure \ref{fig:Abs.-errors-oOldSin15Eigen-1}
presents the recovered potential. 
\begin{figure}[H]
\begin{centering}
\includegraphics[scale=0.62]{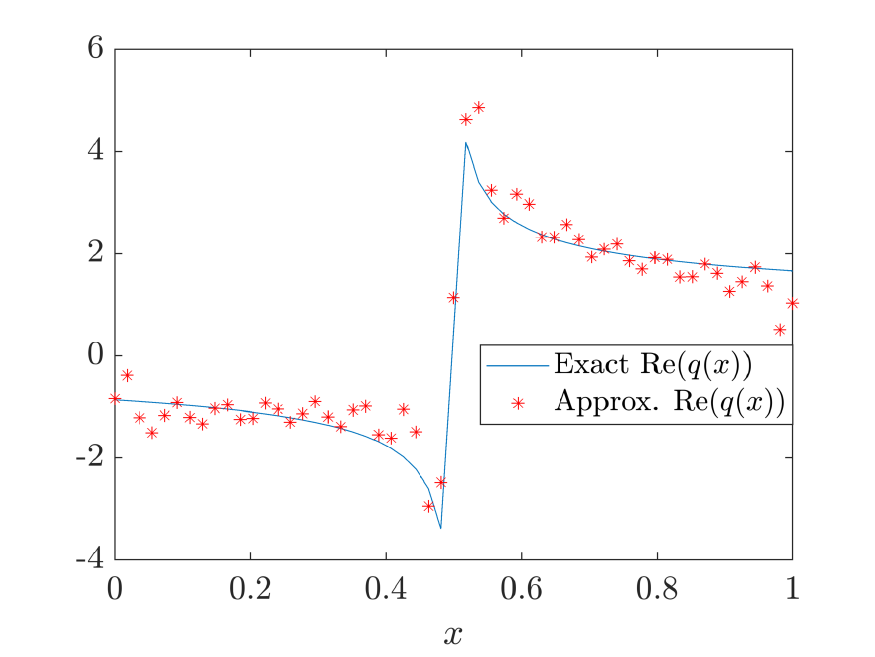}\includegraphics[scale=0.62]{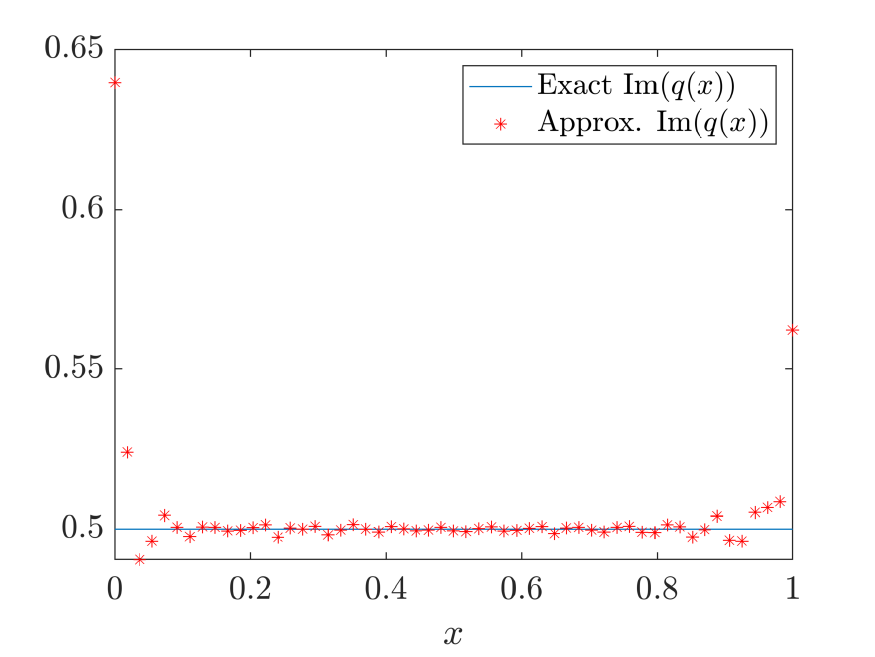}
\par\end{centering}
\caption{\label{fig:Abs.-errors-oOldSin15Eigen-1}Example \ref{-Consider-theC1-1}.
Real (left) and imaginary (right) parts of the potential recovered
from given 15 eigenpairs.}
\end{figure}

The maximum absolute error of the recovered potential is $9.5\times10^{-1},$
see Figure \ref{fig:Abs.-errors-oOldSin15Eigen}.

\begin{figure}[H]
\begin{centering}
\includegraphics[scale=0.7]{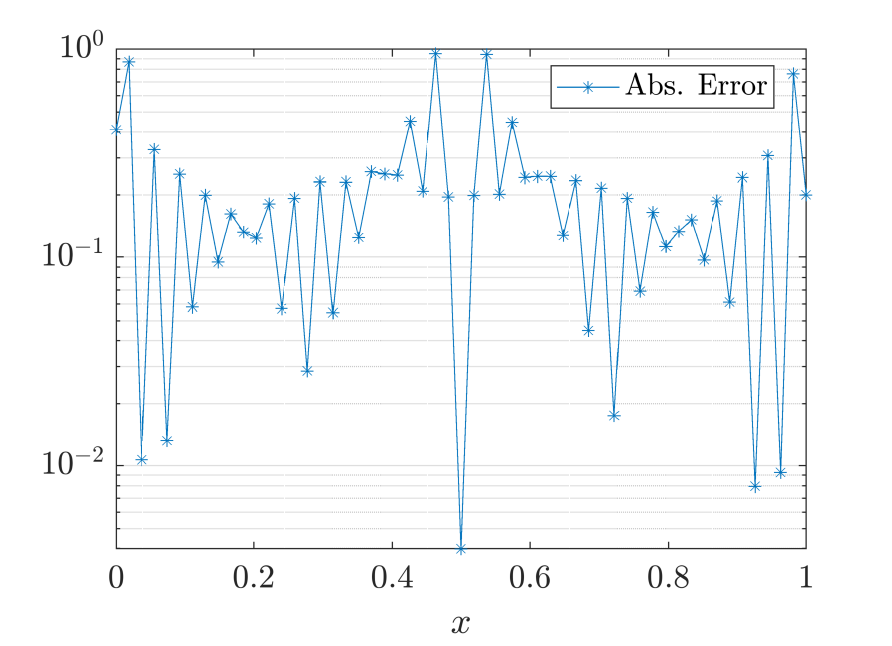}
\par\end{centering}
\caption{\label{fig:Abs.-errors-oOldSin15Eigen}Abs. error of the potential
of Example \ref{-Consider-theC1-1} recovered from given 15 eigenpairs.}
\end{figure}

This example shows the possibility of recovering a complex valued
singular potential from a limited set of noisy eigenvalues using the
proposed algorithm for potentials in \textbf{$\mathcal{L}_{2}[0,L]$}.

\end{example}

\section{Conclusions}

Cardinal series representations for the solutions of the Sturm-Liouville
equation $-y''+q(x)y=\rho^{2}y$ with a complex valued potential $q(x)$
are obtained. They enjoy remarkable convergence properties, which
make them a very convenient tool for solving a variety of spectral
problems. In particular, with their aid the spectrum completion problem
can be efficiently solved, allowing one to compute with a good accuracy
large sets of the eigenvalues from a reduced number of the given ones
and without any information on the potential and on the constants
from the boundary conditions. This idea leads to an efficient computation
of a Weyl function from two reduced finite sets of the eigenvalues.
This application of the cardinal series representations is developed
in the present paper and illustrated by numerical tests. Finally,
a method for solving inverse two-spectra Sturm-Liouville problems
for complex potentials is developed, based on the cardinal series
representations. Its numerical efficiency is discussed and illustrated
by numerical tests.

\textbf{Acknowledgments} Research was supported by CONAHCYT, Mexico
via the project 284470. The authors thank the helpful comments of
Sergii M. Torba.

\textbf{Data availability} The data that support the findings of this
study are available upon reasonable request. 

\textbf{Conflict of interest} This work does not have any conflict
of interest.

\end{document}